\newtheorem{theorem}{Theorem}
\newtheorem{corollary}{Corollary}
\newtheorem{lemma}{Lemma}
\newtheorem{example}{Example}
\newtheorem{conjecture}{Conjecture}
\newcommand{\eop}{\hfill{$\Box$}}
\newenvironment{proof}
{\begin{trivlist}\item[]{{\sc Proof.}}}{\eop\noindent\end{trivlist}}
\newcommand{\diam}{\mbox{diam}}
\newcommand{\dist}{\mbox{dist}}
\newcommand{\conv}{\mbox{conv}}
\newcommand{\highlight}[1]{#1}
\newcommand{\highlightt}[1]{#1}
\newcommand{\highlighttt}[1]{#1}
\begin{document}
  \title{\highlight{Open sets} avoiding integral distances}
  \author{
    \begin{minipage}[t]{60mm}
     {\small \sc Sascha Kurz}\\
     {\footnotesize Department of Mathematics}\\
     {\footnotesize University of Bayreuth}\\ 
     {\footnotesize D-95440 Bayreuth, Germany}\\
     {\footnotesize sascha.kurz@uni-bayreuth.de}
    \end{minipage}
    \begin{minipage}[t]{60mm}
    {\small \sc Valery Mishkin}\\
    {\footnotesize Department of Mathematics and Statistics}\\
    {\footnotesize York University}\\
    {\footnotesize Toronto ON, M3J1P3 Canada}\\
    {\footnotesize vmichkin@mathstat.yorku.ca}
    \end{minipage}
  }
  \maketitle
  
  \vspace*{-8mm}
  
  \small{
  \noindent
  $$
  \begin{array}{ll}
    \text{\textbf{Keywords:}} & \text{excluded distances, Euclidean Ramsey theory, integral distances, Erd\H{o}s-type problems} \\
    \text{\textbf{MSC:}} & \text{52C10, 52A40, 51K99} \\
  \end{array}
  $$
  }
  \noindent
  \rule{\textwidth}{0.3 mm}
  \begin{abstract}
    \noindent
    We study open point sets in Euclidean spaces ${\mathbb R}^d$  without a pair of points an integral distance apart. By a
    result of Furstenberg, Katznelson, and Weiss such sets must be of Lebesgue upper density zero. We are interested in how
    large such sets can be in $d$-dimensional volume. We determine the exact values for the \highlighttt{maximum}
    volumes of the sets in terms of the number of their connected components and dimension. Here techniques from diophantine
    approximation, algebra and \highlighttt{the theory of} convex bodies come into play. Our problem can be viewed as a
    \highlighttt{counterpart} to known problems on sets with pairwise rational or integral distances. This reveals interesting
    links between discrete geometry, topology, and measure theory.
  \end{abstract}
  \noindent
  \rule{\textwidth}{0.3 mm}

\section{Introduction}
\noindent
Is there a dense set~$S$ in the plane so that all pairwise Euclidean distances between the points are rational?  This famous
open problem was posed by Ulam in 1945, see e.g.\ \cite{0668.01033,0599.01015,0086.24101}. \highlighttt{Unlike this, a} construction
of a countable dense set in the plane avoiding rational distances is not \highlighttt{hard} to find, see e.g.\
\cite[Problem 13.4, 13.9]{ProblemsSetTheory}. If all pairwise distances between \highlighttt{the} points in $S$
are integral and $S$ is non-collinear, i.e.\ not all points are located on a line, then $S$ is finite \cite{ErdoesAnning1,ErdoesAnning2}.
Having heard of this result, Ulam guessed that the answer to his question would be in the negative. Of course the rational
numbers form a dense subset of a coordinate line with pairwise rational distances; also, on a circle there are dense sets with
pairwise rational distances, see e.g.\ \cite{dense_subset,ErdoesAnning1}. It was proved by Solymosi and De Zeeuw \cite{1209.52009}
that the line and the circle are the only two irreducible algebraic curves containing infinite subsets of points with pairwise
rational distances. Point sets with rational coordinates on spheres have been considered in
\cite{springerlink:10.2478/s11533-008-0038-4}. 
There is interest in a general construction of a planar point set $S(n,k)$ of size $n$ with pairwise integral distances such
that $S(n,k)=A\cup B$ where $A$ is collinear, $|A|=n-k$, $|B|=k$, and $B$ has no three collinear points. The current record
is $k=4$ \cite{four_points_beside}.
And indeed, it is very hard to construct a planar point set, no three points on a line, no four points on a circle, with
pairwise integral distances. Kreisel and Kurz~\cite{1145.52010} found such a set of size~$7$, but it is unknown if there
exists one of size~$8$.

The present paper is concerned with a problem that may be considered as \highlighttt{a counterpart} to those just described, namely with
\textit{large} point sets in $\mathbb{R}^d$ without a pair of points an integral distance apart. We write $f_d(n)$ for the
supremum of the volumes $\lambda_d(\mathcal{P})$ of open point sets $\mathcal{P}\subset\mathbb{R}^d$ with $n$ connected
components without a pair of points whose distance apart is a positive integer. We determine the exact \highlighttt{values} of
\highlighttt{the function} $f_d(n)$ \highlighttt{for all $d$ and $n$}.

This problem is related to the famous Hadwiger--Nelson open problem of determining the (measurable) chromatic number of
$\mathbb{R}^{d}$, see e.g.\ \cite[Problem G10]{UPIG}. Here one can also ask for the highest density of one color class in
such a coloring, that is, we may ask for the densest set without a pair of points a distance $1$ apart. In \cite{1146.05025}
such a construction in $\mathbb {R}^3$ has been given.
In the plane the best known example, due to Croft~\cite{croft}, consists of the intersections of hexagons with circles and
attains a density of $0.2294$. The upper bounds are computed in \cite{10.1007s00039-009-0013-7,1205.90196}. Point sets
avoiding a finite number $k$ of prescribed distances are considered e.g.\ in \cite{1169.52005} and \cite[Problem G4]{UPIG},
so the point sets avoiding all distances that are positive integers correspond to the case with an infinite number $\kappa$
of excluded distances. It is known \cite{0738.28013} that for each subset $\mathcal{U}$ of the plane with \highlighttt{positive
density}, there is a constant $d(\mathcal{U})$ such that all distances greater than $d(\mathcal{U})$ occur \highlighttt{between the points
of $\mathcal{U}$}. The same result is true in higher dimensions \cite{FCD_higher_dimension}.
It follows that in every dimension $d\ge 2$, the Lebesgue measurable sets avoiding integral distances, which are of interest
here, must be of upper density zero, so we consider the supremum of their volumes instead.

The paper is organized as follows: in Section~\ref{sec_observations} we introduce the basic notation and
provide characterizations of \highlighttt{arbitrary open} point sets without pairs of points an integral distance apart.
After \highlighttt{stating} first \highlighttt{relationships between the} upper bounds \highlighttt{for} the maximum 
\highlighttt{volumes} of those sets \highlighttt{with different numbers of connected components} we continue in
Section~\ref{sec_collection_of_balls} by considering a relaxed problem. \highlighttt{We evaluate the maximum volumes of sets
avoiding integral distances in the special case where the connected components of the sets are open balls. In our crucial
constructions we make use of Weyl's theorem from diophantine approximation and the fact, we derived from Mann's theorem, 
that the lengths} of the diagonals of a regular $p$-gon are linearly independent over $\mathbb{Q}$ \highlighttt{whenever} $p$
is a prime. In Section~\ref{sec_bounds} we \highlighttt{approach} the \highlighttt{main problem of evaluating the function
$f_d(n)$ in the general case}. For \highlighttt{two-component opens sets ($n=2$) we provide a complete solution in} 
Subsection~\ref{subsec_two_components}. Motivated by \highlighttt{the} necessary conditions for \highlighttt{open} point sets
\highlighttt{to avoid} integral distances we consider $d$-dimensional open sets with $n$ connected components of diameter at
most \highlighttt{$1$} each whose intersection with every line \highlighttt{has a} total \highlighttt{length} of at most \highlighttt{$1$}.
\highlighttt{At the end of Subsection~\ref{subsec_two_components} we state a conjecture on the exact values of $l_d(n)$ that bears a strong
resemblance to the problems} of geometric tomography, see e.g.\ \cite{MR2251886}. \highlighttt{In 
Subsection~\ref{subsec_bounds_line_restricted} we provide some upper bounds for $l_d(2)$ with $d\ge 2$.}
\highlighttt{The main problem of  evaluating $f_d(n)$ generally is finally settled in Subsection~\ref{subsec_bounds_anti_integral}.}
In Section~\ref{sec_conclusion} we give a summary of the results obtained and draw the appropriate conclusions.

\section{General observations and basic notation}
\label{sec_observations}

Denote by $\dist(x,y)$  the Euclidean distance between two points $x,y\in {\mathbb R}^d$ and by
$\dist(V,W):=\inf \{\dist(x,y)\mid x\in V,y\in W\}$ the distance between two subsets $V$ and $W$ of ${\mathbb R}^d$. The
minimum width of $V$, i.e.\ the minimum distance between parallel support hyperplanes of the closed convex hull of $V$, will
be denoted by $\mbox{width}(V)$, and $\lambda_d$ will stand for the Lebesgue measure in $\mathbb{R}^d$.

At first we observe that the diameter of any connected component of an open set avoiding integral distances, i.e.\ having
no points an integral distance apart, is at most \highlighttt{$1$}.

\begin{lemma}
  \label{lemma_diameter}
  Let $\mathcal{P}\subseteq\mathbb{R}^d$ be an open set avoiding integral distances. Then for every connected component
  $\mathcal{C}$ of $\mathcal{P}$ we have $\diam(\mathcal{C})\le 1$.
\end{lemma}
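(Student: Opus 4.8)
The plan is to argue by contradiction using the intermediate value theorem along a path inside the component. Suppose, for some connected component $\mathcal{C}$ of $\mathcal{P}$, we had $\diam(\mathcal{C})>1$. Since the diameter is by definition the supremum of the pairwise distances in $\mathcal{C}$, I could then pick two points $x,y\in\mathcal{C}$ with $\dist(x,y)>1$.

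Next I would invoke the standard fact that a connected open subset of $\mathbb{R}^d$ is path-connected (indeed polygonally connected), so there is a continuous curve $\gamma\colon[0,1]\to\mathcal{C}$ with $\gamma(0)=x$ and $\gamma(1)=y$. Define $h\colon[0,1]\to\mathbb{R}$ by $h(t)=\dist(x,\gamma(t))$; this is continuous, being the composition of $\gamma$ with the (continuous) distance function from the fixed point $x$. We have $h(0)=0$ and $h(1)=\dist(x,y)>1$, so the intermediate value theorem yields some $t_0\in(0,1)$ with $h(t_0)=1$.

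Then $x$ and $\gamma(t_0)$ are two points of $\mathcal{C}\subseteq\mathcal{P}$ at distance exactly $1$, a positive integer, contradicting the hypothesis that $\mathcal{P}$ avoids integral distances. Hence $\diam(\mathcal{C})\le 1$ for every connected component $\mathcal{C}$, as claimed.

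I do not expect a genuine obstacle here; the only points deserving a word of justification are the passage from $\diam(\mathcal{C})>1$ to an honest pair of points more than unit distance apart, and the path-connectedness of an open connected set — both routine. One should merely make sure to note that the entire curve $\gamma$, and in particular $\gamma(t_0)$, stays within the single component $\mathcal{C}$, so the offending pair really does lie in $\mathcal{P}$.
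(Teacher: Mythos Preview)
Your proof is correct and follows essentially the same route as the paper: assume $\diam(\mathcal{C})>1$, choose two points more than unit distance apart, use path-connectedness of open connected subsets of $\mathbb{R}^d$, and apply the intermediate value theorem to the distance function along a path to locate a pair at distance exactly $1$. The paper's argument is a touch terser (it does not name the auxiliary function $h$ explicitly), but the idea is identical.
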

\begin{proof}
  Suppose there is a connected component $\mathcal{C}$ with $\diam(\mathcal{C})> 1$, then there exist 
  $x_1,x_2\in \mathcal{C}$ such that $\dist(x_1,x_2)>1$. Since ${\mathbb R}^d$ is locally connected, $\mathcal{C}$ is open, so it is path connected. Hence there
  is a point $x$ on the image curve of a continuous path  in $\mathcal{C}$ joining $x_1$ and $x_2$ such that  $\dist(x_1,x)=1$.
\end{proof}

By the isodiametric inequality the open ball $B_d\subset\mathbb{R}^d$ centered at the origin with unit diameter has the
largest volume among measurable sets in ${\mathbb R}^d$ of diameter at most $1$, see e.g.\
\cite{0804.28001}, \cite[chap.~2]{0603.01017}. Thus we have
$$
  f_d(1)=\lambda_d\!\left(B_d\right)=\frac{\pi^{d/2}}{2^d\cdot\Gamma\left(\frac{d}{2}+1\right)}=
  \left\{ \begin{array}{rcl} \frac{\pi^{\frac{d}{2}}}{2^d\left(\frac{d}{2}\right)!} &\!\!\!& \text{for $d$ even},\\[3mm]
  \frac{\left(\frac{d-1}{2}\right)!\cdot\pi^{\frac{d-1}{2}}}{d!}&\!\!\!&\text{for $d$ odd}.\end{array}\right.
$$
The first few values are given by $\lambda_1\!\left(B_1\right)=1$, $\lambda_2\!\left(B_2\right)=\frac{\pi}{4}$,
$\lambda_3\!\left(B_3\right)=\frac{\pi}{6}$, and $\lambda_4\!\left(B_4\right)=\frac{\pi^2}{32}$. Note 
that the \highlighttt{volume of the} scaled ball \highlighttt{$B$} with diameter $m$ in $\mathbb{R}^d$ \highlighttt{is}
$\lambda_d(\highlighttt{B})=m^d\lambda_d\!\left(B_d\right)$.

Next we characterize $1$-dimensional open sets containing a pair of points an integral distance apart.

\begin{lemma}
  \label{lemma_dimension_1}
  A non-empty open set $\mathcal{P}\subseteq \mathbb R$ contains a pair of points $x,y\in \mathcal{P}$ with
  $\dist(x,y)\in\mathbb N$ if and only if either $\lambda_1(\mathcal{P})>1$ or there is a pair of connected components
  (i.e.\ disjoint open intervals) $\mathcal{C}_1$, $\mathcal{C}_2$ of $\mathcal{P}$ such that
  $\dist(\mathcal{C}_1,\mathcal{C}_2)\notin\mathbb N$ and $\lambda_1(\mathcal{C}_1\cup \mathcal{C}_2)>
  \lceil \dist(\mathcal{C}_1,\mathcal{C}_2)\rceil -\dist(\mathcal{C}_1,\mathcal{C}_2)$. If $\lambda_1(\mathcal{P})\leq 1$,
  then there exists a shift $f: x\mapsto x+a$ of $\mathbb R$ such that $f\left(\mathcal{P}\right)\cap {\mathbb Z}=\varnothing$.
\end{lemma}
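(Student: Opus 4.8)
The characterisation and the ``shift'' claim are essentially independent, so I would prove the biconditional first (by its two implications) and then the shift statement. For the ``if'' direction: if $\lambda_1(\mathcal{P})>1$, then the map $\phi\colon\mathcal{P}\to[0,1)$ sending a point to its fractional part, being a translation on each piece $\mathcal{P}\cap(n,n+1)$, cannot be injective --- otherwise the translated pieces $(\mathcal{P}\cap[n,n+1))-n$ would be pairwise disjoint subsets of $[0,1)$ of total measure $\lambda_1(\mathcal{P})>1$; hence $\phi(x)=\phi(y)$ for distinct $x,y\in\mathcal{P}$, so $\dist(x,y)=|x-y|$ is a positive integer. If instead the second alternative holds for components $\mathcal{C}_1=(a_1,b_1)$ and $\mathcal{C}_2=(a_2,b_2)$, order them with $b_1\le a_2$ and put $\delta=\dist(\mathcal{C}_1,\mathcal{C}_2)=a_2-b_1$ and $\ell=\lambda_1(\mathcal{C}_1\cup\mathcal{C}_2)=(b_1-a_1)+(b_2-a_2)$; then $y-x$ ranges exactly over the open interval $(\delta,\delta+\ell)$ as $(x,y)$ runs through $\mathcal{C}_1\times\mathcal{C}_2$, and since $\delta$ is a positive non-integer the least positive integer exceeding it is $\lceil\delta\rceil$, which lies in $(\delta,\delta+\ell)$ exactly when $\ell>\lceil\delta\rceil-\delta$ --- the hypothesis.

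For the ``only if'' direction, take $x<y$ in $\mathcal{P}$ with $y-x=m\in\mathbb{N}$ and assume $\lambda_1(\mathcal{P})\le 1$, since otherwise the first alternative holds outright. The components $\mathcal{C}_1\ni x$ and $\mathcal{C}_2\ni y$ must be distinct, because an open interval containing two points at distance $m\ge 1$ has length $>1$, contradicting $\lambda_1(\mathcal{P})\le 1$; and $\mathcal{C}_1$ lies to the left of $\mathcal{C}_2$ as $x<y$. With $\delta,\ell$ as before, $m\in(\delta,\delta+\ell)$ gives $\ell>m-\delta$. If $\delta\in\mathbb{N}$ then $m\ge\delta+1$, so $\ell>1$ and $\lambda_1(\mathcal{P})>1$, contrary to assumption; hence $\delta\notin\mathbb{N}$, and then $m\ge\lceil\delta\rceil$ gives $\ell>m-\delta\ge\lceil\delta\rceil-\delta$, which is the second alternative.

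For the shift claim, it is enough to show that if $\lambda_1(\mathcal{P})\le 1$ then the canonical projection $\pi\colon\mathbb{R}\to\mathbb{R}/\mathbb{Z}$ does not map $\mathcal{P}$ onto the circle: choosing $\bar a\notin\pi(\mathcal{P})$ with representative $a_0\in[0,1)$, the shift $f\colon x\mapsto x-a_0$ satisfies $f(\mathcal{P})\cap\mathbb{Z}=\varnothing$, since $a_0+\mathbb{Z}$ misses $\mathcal{P}$. If $\lambda_1(\mathcal{P})<1$ this is immediate from $\lambda(\pi(\mathcal{P}))\le\lambda_1(\mathcal{P})<1$. The borderline case $\lambda_1(\mathcal{P})=1$ is the one real obstacle: a bare measure count only yields $\lambda(\pi(\mathcal{P}))\le 1$, which does not rule out $\pi(\mathcal{P})$ being the whole circle, so one must invoke topology. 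Supposing $\pi(\mathcal{P})=\mathbb{R}/\mathbb{Z}$: a component of length $1$ would make $\mathcal{P}$ a single unit interval with image the circle minus a point --- excluded --- so every component $I_i$ has $|I_i|<1$ and $\pi(I_i)$ is a proper open arc; these arcs cover the compact circle, hence finitely many of them, indexed by some $F$, already do; but a finite cover of the unit-circumference circle by proper open arcs has total length $>1$ (equality would force them pairwise disjoint, impossible on the connected circle), so $1=\lambda_1(\mathcal{P})\ge\sum_{i\in F}|I_i|\ge\sum_{i\in F}\lambda(\pi(I_i))>1$, a contradiction. Everything else is routine interval arithmetic.
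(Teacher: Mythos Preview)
Your proof is correct and, for the biconditional, follows the paper's line closely: both produce an integral distance from $\lambda_1(\mathcal{P})>1$ via the fractional-part map (your version is marginally slicker, working directly with the slices $\mathcal{P}\cap[n,n+1)$ rather than first invoking Lemma~\ref{lemma_diameter} to bound component lengths), and both handle the two-component alternative by the same interval arithmetic on $(\delta,\delta+\ell)$.

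The one substantive difference is the shift claim at the borderline $\lambda_1(\mathcal{P})=1$. The paper splits into cases on whether the images $\phi_1(\mathcal{C}_i)\subseteq[0,1)$ are pairwise disjoint: if not, their union has measure strictly below $1$; if so, at most one component meets $\mathbb{Z}$, and the complement in $[0,1)$ of that component's image is a closed interval which the remaining images---pairwise disjoint open intervals contained in it---cannot cover. You instead project to the compact circle $\mathbb{R}/\mathbb{Z}$, extract a finite subcover by proper open arcs, and note that such a cover must have total arc-length strictly exceeding $1$ (equality would force the arcs pairwise disjoint, which connectedness of the circle rules out). Your compactness argument is more uniform and sidesteps the case split; the paper's stays within elementary interval bookkeeping in $[0,1)$ and never invokes compactness. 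Both are short and valid.
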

\begin{proof}
The restriction of the canonical epimorphism $\phi: {\mathbb{R}}\to {\mathbb{R}}/{\mathbb{Z}}$, 
$x\mapsto x+\mathbb Z=(x-\lfloor x\rfloor) +\mathbb Z$, to the interval $[0,1)$ is a continuous bijection of $[0,1)$ onto
the $1$-dimensional torus ${\mathbb T}={\mathbb R}/{\mathbb Z}$\highlight{,} the inverse map $\phi|_{[0,1)}^{-1}$ being continuous at all
points except $\phi(0)=0+\mathbb Z=\mathbb Z\in\mathbb T$. We consider the retraction $\phi_1:=\phi|_{[0,1)}^{-1}\circ\phi:
\mathbb R\to [0,1)$, that is, $\phi_1(x)=x-\lfloor x\rfloor$ for all $x\in\mathbb R$ (i.e.\ $\phi_1(x)=x\,\mbox{mod}\, 1$ is
the fractional part of $x$).
We observe that the image under $\phi_1$ of any open interval $(x,y)$ of length $y-x<1$ is either the open interval
$(\phi_1(x),\phi_1(y))=(x-n,y-n)$ of the same length $\phi_1(y)-\phi_1(x)=(x-n)-(y-n)=y-x$, whenever both $x$ and  $y$ are
in $(n,n+1)$, for some $n\in\mathbb Z$, or the union of two disjoint connected components
$$[0,\phi_1(y))\cup (\phi_1(x),1)=[0,y-n)\cup (1-(n-x),1)$$  of the same total length $(y-n)+(n-x)=y-x$, whenever
$x<n<y$, for some  $n\in\mathbb Z$. If $y-x=1$, then similarly either $\phi_1((n,n+1))=(0,1)$  or
$\phi_1((x,y))=[0,y-n)\cup (1-(n-x),1)=[0,1)\setminus\{y-n\}$ whenever $x<n<y$ for some $n\in\mathbb N$. Hence, in general,
the total length of the connected components of $\phi_1((x,y))$ is $y-x$, whenever $y-x\leq 1$.

Let $\mathcal{P}$ be the disjoint union of open intervals $\mathcal{C}_i$, say,  with total length
$\lambda_1(\mathcal{P})=\sum_i\lambda_1(\mathcal{C}_i)>1$. Then by Lemma~\ref{lemma_diameter} $i\geq 2$ and
$\lambda_1(\mathcal{C}_i)\le 1$ for all $i$. We thus have from above that the total length of the
connected components of all the images $\phi_1(\mathcal{C}_i)$ equals $\sum_i\lambda_1(\mathcal{C}_i)>1$. Hence at least
two images $\phi_1(\mathcal{C}_k)$ and $\phi_1(\mathcal{C}_j)$ must overlap, so there exists 
$z\in \phi_1(\mathcal{C}_k)\cap \phi_1(\mathcal{C}_j)$, that is, $x_0-\lfloor x_0\rfloor=y_0-\lfloor y_0\rfloor$ for some
$x_0\in \mathcal{C}_k$ and $y_0\in \mathcal{C}_j$. Thus $x_0-y_0=\lfloor x_0\rfloor-\lfloor y_0\rfloor\in\mathbb Z\setminus\{0\}$,
hence $\dist(x_0,y_0)\in\mathbb N$.

If $\lambda_1(\mathcal{C}_1\cup \mathcal{C}_2)>\alpha$ for some connected components $\mathcal{C}_1=(a,b)$ and
$\mathcal{C}_2=(c,d)$ of $\mathcal{P}$ with $\dist(\mathcal{C}_1,\mathcal{C}_2)=c-b=m-\alpha$, where $m\in\mathbb N$,
$0<\alpha<1$, so that $\lceil \dist(\mathcal{C}_1,\mathcal{C}_2)\rceil - \dist(\mathcal{C}_1,\mathcal{C}_2)=\alpha$, we
can take a point $x$ in the leftmost interval, say $x\in \mathcal{C}_1$ and a point $y\in \mathcal{C}_2$ so that the
length of $(x,b)\cup (c,y)$ is $\alpha<\lambda_1(\mathcal{C}_1\cup \mathcal{C}_2)=(b-a)+(d-c)$. Then
$$\dist(x,y)=(b-x)+m-\alpha+(y-c)=\alpha+m-\alpha=m\in\mathbb N.$$

Conversely, suppose there are $x,y\in \mathcal{P}$ with $\dist(x,y)=k\in\mathbb N$. If $x$ and $y$ lie in the same connected
component $\mathcal {C}_i$ of $\mathcal{P}$, then $\lambda_1(\mathcal{C}_i)>k\geq 1$ because $\mathcal{C}_i$ is open, hence
$\lambda_1(\mathcal{P})>1$. \highlight{Suppose} $x$ and $y$ lie in distinct connected components of $\mathcal{P}$, say $x<y$ and
$x\in \mathcal{C}_1=(a,b)$, $y\in \mathcal{C}_2=(c,d)$, and let $\lambda_1(\mathcal{P})\leq 1$. Then $(b-a)+(d-c)\leq 1$ as
well whence the distance between the components is $\dist(\mathcal{C}_1,\mathcal{C}_2)=c-b\notin\mathbb N$, because
$c-b<\dist(x,y)<c-b+[(b-a)+(d-c)]\highlight{\le}c-b+1$. Let $c-b=m-\alpha$ where $m\in\mathbb N,$ $0<\alpha<1$. Then 
$$\alpha=m+b-c<m+1+b-c<(d-a)+(b-c)=(b-a)+(d-c)=\lambda_1(\mathcal{C}_1\cup\mathcal{C}_2),$$ since $m+1<d-a$ because
$m+1\leq k<d-a$. Thus either $\lambda_1(\mathcal{P})>1$ or there is a pair of required connected components of $\mathcal{P}$.

If $\lambda_1(\mathcal{P})\leq 1$, then $\lambda_1(\mathcal{C}_i)\leq 1$ for all $i,$ so the total length of the connected
components of all the images $\phi_1(\mathcal{C}_i)$ equals $\sum_i\lambda_1(\mathcal{C}_i)=\lambda_1(\mathcal{P})$, as 
\highlight{shown previously}.  If $\lambda_1(\mathcal{P})< 1$, then clearly, $\phi_1(\mathcal{P})\neq [0,1)$. If $\lambda_1(\mathcal{P})=1$,
then again $\phi_1(\mathcal{P})\neq [0,1)$, whenever the images ${\phi_1(\mathcal C}_i)$ are not pairwise disjoint. Suppose
all the images  ${\phi_1(\mathcal C}_i)$ are pairwise disjoint and $\mathcal{P}\cap {\mathbb Z}\neq\varnothing$. Then there
is exactly one $\mathcal{C}_j=(a,b)$ that meets $\mathbb Z$. Hence the complement $[0,1)\setminus
\phi_1(\mathcal{C}_j)=[\phi_1(b),\phi_1(a)]$ is a non-open set in $\mathbb R$ that can not be covered by the images
$\phi_1(\mathcal{C}_i)$ of the other connected components of $\mathcal{P}$, since they are all open intervals, so
$\phi_1(\mathcal{P})\neq [0,1)$ as well. Thus in all the cases we have $\phi_1(\mathcal{P})\neq [0,1)$. Take
$\phi_1(a)\in[0,1)\setminus \phi_1(\mathcal{P})$, $a\in\mathbb R$, that is, $\phi_1(a)\cap\phi_1(\mathcal{P})=\varnothing$.
Then $\phi(a)\cap\phi(\mathcal{P})=\varnothing$, i.e.\ $(a+{\mathbb Z})\cap (\mathcal{P}+{\mathbb Z})=\varnothing,$ so
$(\mathcal{P}-a)\cap\mathbb Z=\varnothing$ and the required shift is $f:x\mapsto x+(-a)$.
\end{proof}

Applying Lemma~\ref{lemma_dimension_1} we establish a criterion for \highlighttt{an open} set to avoid integral distances in all dimensions.
\begin{theorem}
  \label{thm_line_intersection}
  An open point set $\mathcal{P}\subseteq \mathbb{R}^d$ does not contain a pair of points an integral distance apart if and only if for every
  line $\mathcal{L}$
  \begin{itemize}
   \item[(i)]  $\lambda_1(\mathcal{P}\cap \mathcal{L})\leq 1$ and 
   \item[(ii)] if $\mathcal{L}$ hits a pair of distinct connected components $\mathcal{C}_1$, $\mathcal{C}_2$ of $\mathcal{P}$
               in the intervals $\mathcal{C}_1\cap \mathcal L,\mathcal{C}_2\cap \mathcal{L}$ with $\dist(\mathcal{C}_1\cap \mathcal{L},
               \mathcal{C}_2\cap \mathcal{L})=r\notin \mathbb{N}$, then
               $\lceil r\rceil -r\ge \lambda_1((\mathcal{C}_1\cup \mathcal{C}_2)\cap \mathcal{L})$.
  \end{itemize}
\end{theorem}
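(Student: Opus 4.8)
The plan is to slice $\mathcal P$ with lines and reduce the whole statement to the one-dimensional Lemma~\ref{lemma_dimension_1}. The only real idea is the observation that $\mathcal P$ contains a pair of points an integral distance apart if and only if some line section does: given $x,y\in\mathcal P$ with $\dist(x,y)\in\mathbb N$, the line $\mathcal L$ through $x$ and $y$ contains that pair inside $\mathcal P\cap\mathcal L$, and conversely $\mathcal P\cap\mathcal L\subseteq\mathcal P$. Hence $\mathcal P$ avoids integral distances precisely when $\mathcal P\cap\mathcal L$ does so for every line $\mathcal L$. Each section $\mathcal P\cap\mathcal L$ is an open subset of $\mathcal L$, and a distance-preserving identification $\mathcal L\cong\mathbb R$ turns it into an open subset of $\mathbb R$ to which Lemma~\ref{lemma_dimension_1} applies; if $\mathcal P\cap\mathcal L=\varnothing$ there is nothing to check.

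It then only remains to transcribe Lemma~\ref{lemma_dimension_1}. Applied to the one-dimensional set $\mathcal P\cap\mathcal L$, it states that this set avoids integral distances exactly when $\lambda_1(\mathcal P\cap\mathcal L)\le 1$ and, for every pair of connected components (open intervals) $\mathcal C_1,\mathcal C_2$ of $\mathcal P\cap\mathcal L$, either $\dist(\mathcal C_1,\mathcal C_2)\in\mathbb N$ or $\lambda_1(\mathcal C_1\cup\mathcal C_2)\le\lceil\dist(\mathcal C_1,\mathcal C_2)\rceil-\dist(\mathcal C_1,\mathcal C_2)$. The first clause is (i). The second, rewritten as: whenever $r:=\dist(\mathcal C_1,\mathcal C_2)\notin\mathbb N$, then $\lceil r\rceil-r\ge\lambda_1(\mathcal C_1\cup\mathcal C_2)$, is (ii), with $\mathcal C_1,\mathcal C_2$ there read as the connected components of $\mathcal P\cap\mathcal L$, each of which lies in a unique connected component of $\mathcal P$. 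Quantifying this equivalence over all lines $\mathcal L$ gives the theorem.

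Two routine points accompany this, and they are where whatever care is needed lies. First, restricting (ii) to $r\notin\mathbb N$ costs nothing: if $\mathcal C_1=(a,b)$ and $\mathcal C_2=(c,d)$ are distinct components of $\mathcal P\cap\mathcal L$ with $b\le c$ and $c-b\in\mathbb N$, then either $c-b=0$, a degenerate touching of intervals for which Lemma~\ref{lemma_dimension_1} demands nothing beyond (i), or $c-b\ge1$, which already forces $\lambda_1(\mathcal P\cap\mathcal L)>1$ and is excluded by (i). Second, and this is the only bookkeeping subtlety, the components entering (ii) are the intervals of the line section $\mathcal P\cap\mathcal L$, which may be strictly finer than the traces on $\mathcal L$ of the connected components of $\mathcal P$ (a single component of $\mathcal P$ can meet a line in several intervals); in the ``only if'' direction (ii) must therefore be applied also to two such intervals belonging to one and the same component of $\mathcal P$, which is automatic once (ii) is read this way. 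With these understood, the theorem is a line-by-line restatement of Lemma~\ref{lemma_dimension_1}, and I anticipate no genuine difficulty.
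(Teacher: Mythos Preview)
Your approach is exactly the paper's: the paper gives no detailed proof of this theorem, merely remarking that it follows by applying Lemma~\ref{lemma_dimension_1} to line sections, and your reduction ``$\mathcal P$ avoids integral distances iff every line section does, then invoke Lemma~\ref{lemma_dimension_1}'' is precisely that.

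Two small remarks. Your second bookkeeping observation---that (ii) must be read as a condition on pairs of \emph{intervals of $\mathcal P\cap\mathcal L$}, not merely on traces of distinct components of $\mathcal P$---is a genuine clarification the paper's phrasing glosses over; without it the ``if'' direction would fail (a single thin annulus satisfies (i) and, under the literal reading, vacuously (ii), yet contains points at distance~$1$). Your first ``routine point,'' however, contains a slip: $c-b\ge 1$ is the \emph{gap} between the intervals and does not force $\lambda_1(\mathcal P\cap\mathcal L)>1$. Fortunately that paragraph is unnecessary: Lemma~\ref{lemma_dimension_1} and condition (ii) are both silent when $r\in\mathbb N$, so the two criteria match trivially in that case and there is nothing further to argue. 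The proof stands once that sentence is dropped.
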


Another criterion, which we will also be using is:

\begin{lemma}
  \label{lemma_prestage_width}
  Let $\mathcal{P}$ be a $d$-dimensional \highlighttt{disconnected} open set \highlighttt{all of whose connected} components \highlighttt{are of}
  diameter at most $1$. \highlight{Then} $\mathcal{P}$ contains a pair of points with integral distance if and only if
  $$\Bigl(\dist(\mathcal{C}_1,\mathcal{C}_2),\diam(\mathcal{C}_1\cup\mathcal{C}_2)\Bigr)\cap\mathbb{N}\neq\emptyset$$ for \highlighttt{some} of its
  \highlighttt{connected} components $\mathcal{C}_1,\mathcal{C}_2$.
\end{lemma}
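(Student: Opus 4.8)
The plan is to prove both implications by elementary point-set topology, using only that every connected component of $\mathcal{P}$ is open and has diameter at most $1$, together with the intermediate value theorem. I begin with a reduction. Since a component $\mathcal{C}$ is open with $\diam(\mathcal{C})\le 1$, no two of its points are an integral distance apart: if $x,y\in\mathcal{C}$ had $\dist(x,y)\ge 1$, then, because $\mathcal{C}$ is open, a point $y'\in\mathcal{C}$ slightly beyond $y$ on the ray from $x$ through $y$ would satisfy $\dist(x,y')>1\ge\diam(\mathcal{C})$, which is impossible; hence all intra-component distances are strictly less than $1$. Moreover, taking a single component $\mathcal{C}$ for both $\mathcal{C}_1$ and $\mathcal{C}_2$ gives $\bigl(\dist(\mathcal{C},\mathcal{C}),\diam(\mathcal{C})\bigr)=\bigl(0,\diam(\mathcal{C})\bigr)\subseteq(0,1)$, which contains no natural number. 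So in both directions only pairs of \emph{distinct} components are relevant, and I may assume $\mathcal{C}_1\neq\mathcal{C}_2$ throughout.

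For the ``only if'' direction, suppose $x,y\in\mathcal{P}$ with $\dist(x,y)=k\in\mathbb{N}$. By the reduction $x$ and $y$ lie in distinct components $\mathcal{C}_1\ni x$ and $\mathcal{C}_2\ni y$, so $\dist(\mathcal{C}_1,\mathcal{C}_2)\le k$ and $\diam(\mathcal{C}_1\cup\mathcal{C}_2)\ge k$. Both inequalities are in fact strict: since $\mathcal{C}_1$ is open, displacing $x$ slightly toward $y$ produces a point of $\mathcal{C}_1$ at distance $<k$ from $y\in\mathcal{C}_2$, so $\dist(\mathcal{C}_1,\mathcal{C}_2)<k$; and displacing $x$ slightly away from $y$ produces a point of $\mathcal{C}_1$ at distance $>k$ from $y$, so $\diam(\mathcal{C}_1\cup\mathcal{C}_2)>k$. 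Hence $k\in\bigl(\dist(\mathcal{C}_1,\mathcal{C}_2),\diam(\mathcal{C}_1\cup\mathcal{C}_2)\bigr)\cap\mathbb{N}$, as required.

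For the ``if'' direction, let $m\in\mathbb{N}$ with $\dist(\mathcal{C}_1,\mathcal{C}_2)<m<\diam(\mathcal{C}_1\cup\mathcal{C}_2)$ for distinct components $\mathcal{C}_1,\mathcal{C}_2$. Then $\diam(\mathcal{C}_1\cup\mathcal{C}_2)>m\ge 1\ge\max\{\diam(\mathcal{C}_1),\diam(\mathcal{C}_2)\}$, so the diameter of the union is not approached within a single component, and therefore $\diam(\mathcal{C}_1\cup\mathcal{C}_2)=\sup\{\dist(p,q):p\in\mathcal{C}_1,\,q\in\mathcal{C}_2\}$. The map $(p,q)\mapsto\dist(p,q)$ is continuous on the connected set $\mathcal{C}_1\times\mathcal{C}_2$, so its image is an interval $I\subseteq\mathbb{R}$ with $\inf I=\dist(\mathcal{C}_1,\mathcal{C}_2)$ and $\sup I=\diam(\mathcal{C}_1\cup\mathcal{C}_2)$; consequently $\bigl(\dist(\mathcal{C}_1,\mathcal{C}_2),\diam(\mathcal{C}_1\cup\mathcal{C}_2)\bigr)\subseteq I$, and in particular $m\in I$. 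Thus there are $p\in\mathcal{C}_1\subseteq\mathcal{P}$ and $q\in\mathcal{C}_2\subseteq\mathcal{P}$ with $\dist(p,q)=m$, so $\mathcal{P}$ contains a pair of points an integral distance apart.

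The only mildly delicate points are the two strict inequalities in the ``only if'' part, which are precisely where openness of the components is used, and the identification $\diam(\mathcal{C}_1\cup\mathcal{C}_2)=\sup\{\dist(p,q):p\in\mathcal{C}_1,\,q\in\mathcal{C}_2\}$, which relies on the diameter bound $1$. Everything else is just the intermediate value theorem applied to the connected product $\mathcal{C}_1\times\mathcal{C}_2$, so I anticipate no genuine obstacle.
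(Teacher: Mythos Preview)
Your proof is correct and follows essentially the same approach as the paper's: both directions use openness of the components to get strict inequalities and the intermediate value theorem to realize the integer distance. The only cosmetic difference is that in the ``if'' direction you apply the intermediate value theorem to the continuous map $\dist$ on the connected product $\mathcal{C}_1\times\mathcal{C}_2$, whereas the paper invokes path-connectedness of each component and joins two pairs of points by explicit paths; the underlying idea is the same.
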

\begin{proof}
  Since all the \highlighttt{connected} components \highlighttt{of $\mathcal{P}$} are open \highlighttt{with} diameter at most $1$, 
  \highlighttt{any two distinct} points \highlighttt{of $\mathcal{P}$} with integral distance \highlighttt{must be in} two different components,
  say $\mathcal{C}_1$ and $\mathcal{C}_2$. Let $x\in\mathcal{C}_1$, $y\in\mathcal{C}_2$ with $\dist(x,y)=n\in\mathbb{N}$. We then select two small
  closed balls $\overline{B}(x,\varepsilon_1)\subsetneq\mathcal{C}_1$ and $\overline{B}(y,\varepsilon_2)\subsetneq\mathcal{C}_2$ centered at $x$ and $y$
  respectively \highlighttt{with radii} $\varepsilon_1,\varepsilon_2>0$. The line $\mathcal{L}$ through $x$ and $y$ meets the two balls in 
  \highlighttt{the} intervals, \highlighttt{say} $[x_1,x_2]\subsetneq \highlight{\overline{B}}(x,\varepsilon_1)$
  and $[y_1,y_2]\subsetneq \highlight{\overline{B}}(y,\varepsilon_2)$, where $x_1,x_2\in\mathcal{C}_1$ and $y_1,y_2\in\mathcal{C}_2$. With this notation we have
  $$
  \dist(\mathcal{C}_1,\mathcal{C}_2)<\min_{1\le i,j\le 2}\dist(x_i,y_j)<\dist(x,y)=n<\max_{1\le i,j\le 2}\dist(x_i,y_j)<
  \diam(\mathcal{C}_1\cup\mathcal{C}_2). 
  $$
  Conversely, if $\dist(\mathcal{C}_1,\mathcal{C}_2)<n<\diam(\mathcal{C}_1\cup\mathcal{C}_2)$ for an integer $n$, then there exist $x_1,x_2\in\mathcal{C}_1$ and
  $y_1,y_2\in\mathcal{C}_2$ such that
  $$
  \dist(\mathcal{C}_1,\mathcal{C}_2)<\dist(x_1,y_1)<n<\dist(x_2,y_2)<\diam(\mathcal{C}_1\cup\mathcal{C}_2). 
  $$
  Joining $x_1$ with $x_2$ in $\mathcal{C}_1$ and $y_1$ with $y_2$ in $\mathcal{C}_2$ by continuous paths, we can find $x\in\mathcal{C}_1$ and $y\in\mathcal{C}_2$
  on \highlighttt{the image curves of} these paths with $\dist(x,y)=n$.
\end{proof}

\highlight{Sometimes it is helpful, if we can assume that the connected components of the point sets in question are not
too close to each other. Specifically, we will be using the fact that in such cases the connected components of the sets have 
disjoint closures.} 

\begin{lemma}
  \label{lemma_repultion_property}
  Let $\mathcal{C}_1$, $\mathcal{C}_2$ be distinct connected components of a $d$-dimen\-sio\-nal open point set $\mathcal{P}$
  without a pair of points an integral distance apart. If $\lambda_d(\mathcal{C}_1\cup \mathcal{C}_2)>\lambda_d\!\left(B_d\right)$,
  then $\dist(\mathcal{C}_1,\mathcal{C}_2)\ge 1$.
\end{lemma}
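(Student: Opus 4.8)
The plan is to argue by contraposition: assuming $\dist(\mathcal{C}_1,\mathcal{C}_2)<1$, I would show that $\lambda_d(\mathcal{C}_1\cup\mathcal{C}_2)\le\lambda_d(B_d)$, which contradicts the hypothesis. The whole argument then reduces to bounding the diameter of $\mathcal{C}_1\cup\mathcal{C}_2$ and invoking the isodiametric inequality.

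First I would record that, by Lemma~\ref{lemma_diameter}, every connected component of $\mathcal{P}$ has diameter at most $1$; in particular, since $\mathcal{C}_1$ and $\mathcal{C}_2$ are distinct components, $\mathcal{P}$ is a disconnected open set to which Lemma~\ref{lemma_prestage_width} applies. As $\mathcal{P}$ avoids integral distances, the contrapositive of that lemma gives, for the particular pair $\mathcal{C}_1,\mathcal{C}_2$, that the open interval $\bigl(\dist(\mathcal{C}_1,\mathcal{C}_2),\diam(\mathcal{C}_1\cup\mathcal{C}_2)\bigr)$ contains no integer. Now $0\le\dist(\mathcal{C}_1,\mathcal{C}_2)<1$ by assumption, so if we had $\diam(\mathcal{C}_1\cup\mathcal{C}_2)>1$, then the integer $1$ would lie in that interval, a contradiction. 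Hence $\diam(\mathcal{C}_1\cup\mathcal{C}_2)\le 1$, and the isodiametric inequality (as invoked just before the formula for $f_d(1)$) yields $\lambda_d(\mathcal{C}_1\cup\mathcal{C}_2)\le\lambda_d(B_d)$, completing the contradiction and hence the proof.

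I do not expect a genuine obstacle here; the points needing care are purely bookkeeping: the boundary case $\dist(\mathcal{C}_1,\mathcal{C}_2)=0$, where the closures of the two components may touch, must be permitted and is covered by the same interval argument, and one must keep the strict versus non-strict inequalities straight so that the integer $1$ is genuinely excluded from the open interval. As an alternative to quoting Lemma~\ref{lemma_prestage_width}, one could give a self-contained argument in the spirit of the proof of Lemma~\ref{lemma_diameter}: choose $x_1\in\mathcal{C}_1$, $y_1\in\mathcal{C}_2$ with $\dist(x_1,y_1)<1$ and, if $\diam(\mathcal{C}_1\cup\mathcal{C}_2)>1$, choose $x_2\in\mathcal{C}_1$, $y_2\in\mathcal{C}_2$ with $\dist(x_2,y_2)>1$, join $x_1$ to $x_2$ inside $\mathcal{C}_1$ and $y_1$ to $y_2$ inside $\mathcal{C}_2$ by continuous paths, and apply the intermediate value theorem to the distance function along these paths to produce a pair of points exactly distance $1$ apart, contradicting that $\mathcal{P}$ avoids integral distances.
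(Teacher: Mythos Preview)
Your argument is correct. The logical content is the same as the paper's proof, but the packaging differs slightly: the paper argues directly rather than by contraposition, first using the isodiametric inequality to get $\diam(\mathcal{C}_1\cup\mathcal{C}_2)>1$ from the volume hypothesis, and then, assuming $\dist(\mathcal{C}_1,\mathcal{C}_2)<1$, running exactly the path-connectedness argument you sketch as your ``alternative'' to produce a pair at distance~$1$. In other words, the paper does not invoke Lemma~\ref{lemma_prestage_width} but instead re-proves the special case it needs on the spot; your primary route via Lemma~\ref{lemma_prestage_width} is a bit more modular and avoids repeating that intermediate-value step.
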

\begin{proof}
  \highlight{Making use of} the isodiametric inequality \highlight{we deduce from $\lambda_d(\mathcal{C}_1\cup \mathcal{C}_2)>\lambda_d\!\left(B_d\right)$
  that $\diam(\mathcal{C}_1\cup \mathcal{C}_2)>1$. By} 
  Lemma~\ref{lemma_diameter} we have $\diam(\mathcal{C}_1)\le 1$ and $\diam(\mathcal{C}_2)\le 1$. So we can choose
  $x_1\in \mathcal{C}_1$, $x_2\in \mathcal{C}_2$ with $\dist(x_1,x_2)>1$. If $\dist(\mathcal{C}_1,\mathcal{C}_2)<1$, then there
  exist $\bar x_1\in \mathcal{C}_1$ and $\bar x_2\in \mathcal{C}_2$ such that $\dist(\bar x_1,\bar x_2)<1$. Since $\mathcal{C}_1$
  and $\mathcal{C}_2$ are open, they are path connected, hence we can join $x_1$ and $\bar x_1$ by a continuous path in
  $\mathcal{C}_1$ and similarly $x_2$ and $\bar x_2$ in $\mathcal{C}_2$ and on the image curves of these paths we then find
  $x_1'\in \mathcal{C}_1$ and $x_2'\in \mathcal{C}_2$ such that $\dist(x_1',x_2')=1$, but $\mathcal{P}$ avoids integral distances,
  a contradiction. Thus we have $\dist(\mathcal{C}_1,\mathcal{C}_2)\geq 1$.
\end{proof}

As Lemma~\ref{lemma_diameter} and Theorem~\ref{thm_line_intersection}(i) will be our main tools in estimating upper bounds
for $f_d(n)$, we denote by $l_d(n)$ the supremum of the volumes $\lambda_d(\mathcal{P})$ of open point sets
$\mathcal{P}\subseteq\mathbb{R}^d$ \highlighttt{with} $n$ connected components \highlighttt{of diameter} at most~$1$ \highlighttt{each}
(condition (a)), \highlighttt{and with} total length of the intersection with every line at most $1$ (condition (b)). Clearly
$l_d(1)=f_{\highlight{d}}(1)=\lambda_d\!\left(B_d\right)$ and $f_d(n)\le l_d(n)$ \highlighttt{for all $d$ and $n$}. Note that
omitting condition (b) \highlighttt{trivializes the problem of estimating the extreme volumes, the} extreme configurations 
\highlighttt{obviously} consist of $n$ disjoint open $d$-dimensional balls of diameter $1$. Dropping condition (a) makes the problem more
challenging. It turns out that there are open connected $d$-dimensional point sets $\mathcal{P}$ with infinite volume $\lambda_d(\mathcal{P})$
and diameter $\diam(\mathcal{P})$ even though the length of the intersection of $\mathcal{P}$ with every line $\mathcal{L}$ is at
most $1$, i.e.\ $\lambda_1(\mathcal{P}\cap\mathcal{L})\le 1$ \highlighttt{for all $\mathcal{L}$}.

\begin{example}
  \label{annuli_construction}
  For integers $n\ge 1$ and $d\ge 2,$ denote by $\mathcal{A}_n^d$  the $d$-dimensional open spherical shell, or annulus,
  centered at the origin with inner radius $n$ and outer radius $n+\frac{1}{dn^d}$, i.e.\ $\mathcal{A}_n^d$ are bounded by
  concentric $(d-1)$-dimensional spheres centered at the origin. These shells will guarantee that the
  volume of their union is unbounded as $n$ increases. So far the constructed point set is disconnected. To obtain
  a connected point set, we denote by $\mathcal{B}_n^d$ the $d$-dimensional
  open spherical shell centered on the $y$-axis at $n+\frac{3}{4}$ with inner radius $1$ and outer radius $1+\frac{1}{n^4}$.
  With this $\mathcal{P}=\cup_{n\ge 30} \left(\mathcal{A}_n^d\cup\mathcal{B}_n^d\right)$ is open and connected with
  infinite volume and diameter even though the length of its intersection with every line is smaller than 1. In
  Figure~\ref{fig_annuli_construction} we depicted \highlighttt{such} a configuration \highlighttt{in} dimension $d=2$
  with first few annuli getting thinner and thinner 
  $\mathcal{A}_n^2$ and $\mathcal{B}_n^2$ being blue and green respectively. The detailed computations demonstrating the
  assertions claimed are provided in the \highlight{Appendix, see Subsection~\ref{subsec_annuli}}.
\end{example}

\begin{figure}[htp]
  \begin{center}
    \includegraphics{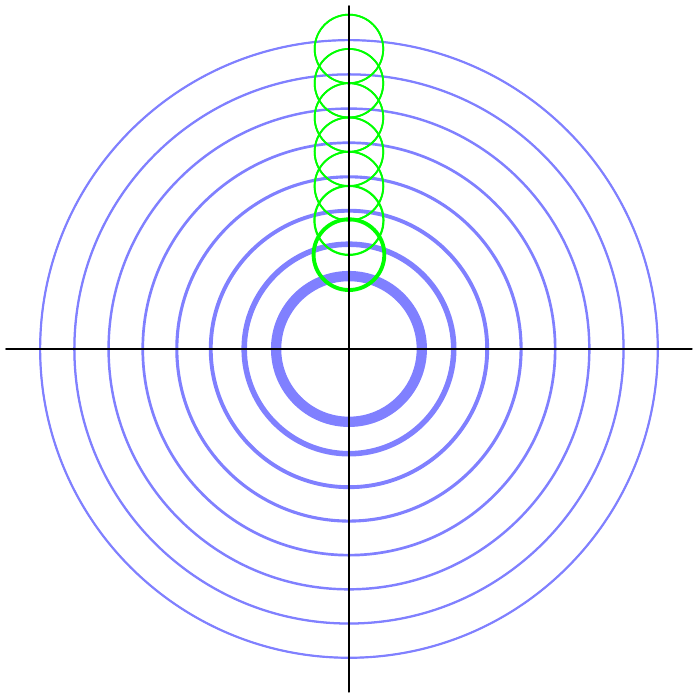}
    \caption{Concentric annuli with infinite area but small lengths of line intersections.}
    \label{fig_annuli_construction}
  \end{center}
\end{figure}

\highlighttt{In order to make the problem of evaluating the functions $f_d(n)$ 
and $l_d(n)$ more tractable}, we consider both problems in the special case, where the connected components are restricted to $d$-dimensional
open balls. We denote the corresponding maximum volumes by $f^\circ_d(n)$ and $l^\circ_d(n)$ respectively. Clearly we have $f^\circ_d(n)\le f_d(n)$ and $f^\circ_d(n)\le l^\circ_d(n)\le l_d(n)$. In Section~\ref{sec_collection_of_balls} we determine the \highlighttt{exact} values of both functions $l^\circ_d(n)$
and $f^\circ_d(n)$ \highlighttt{for all $d$ and $n$}.

Based on a simple averaging argument, any given upper bound on one of the four introduced maximum volumes for $n$ connected components
\highlighttt{yields} an upper bound for $k\ge n$ connected components in the same dimension.
\begin{lemma}
  \label{lemma_averaging}
  For each $k\ge n$ we have \highlightt{$l_d(k)\le \frac{k}{n}\cdot \Lambda_1$ whenever $l_d(n)\le \Lambda_1$ and 
  $f_d(k)\le \frac{k}{n}\cdot \Lambda_2$ whenever $f_d(n)\le \Lambda_2$}.
\end{lemma}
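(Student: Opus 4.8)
The plan is to prove both inequalities simultaneously by a double-counting (averaging) argument over all $n$-element sub-collections of the $k$ connected components, exploiting the fact that both classes under consideration --- open sets satisfying (a) and (b), and open sets avoiding integral distances --- are closed under discarding connected components.

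First I would fix $\varepsilon>0$ and choose an open set $\mathcal{P}\subseteq\mathbb{R}^d$ with connected components $\mathcal{C}_1,\dots,\mathcal{C}_k$ that is near-extremal for the relevant class, i.e. $\lambda_d(\mathcal{P})>l_d(k)-\varepsilon$ for the first statement, or $\lambda_d(\mathcal{P})>f_d(k)-\varepsilon$ with $\mathcal{P}$ avoiding integral distances for the second. (If this near-extremal set has fewer than $n$ components, then $\lambda_d(\mathcal{P})\le l_d(n)\le\Lambda_1\le\frac{k}{n}\Lambda_1$ is immediate, so I may assume it has exactly $k$.) For every $n$-element subset $I\subseteq\{1,\dots,k\}$ put $\mathcal{P}_I:=\bigcup_{i\in I}\mathcal{C}_i$. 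The key observation is that $\mathcal{P}_I$ is again a legitimate competitor for $l_d(n)$ (resp.\ $f_d(n)$): it is open with $n$ connected components, each of diameter at most $1$; since $\mathcal{P}_I\subseteq\mathcal{P}$, its intersection with any line $\mathcal{L}$ has length at most $\lambda_1(\mathcal{P}\cap\mathcal{L})\le 1$, so condition (b) persists; and avoiding integral distances is inherited by any subset. Hence $\lambda_d(\mathcal{P}_I)\le l_d(n)\le\Lambda_1$ (resp.\ $\le f_d(n)\le\Lambda_2$).

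Next I would sum over all $\binom{k}{n}$ choices of $I$. Because the $\mathcal{C}_i$ are pairwise disjoint and each index $i$ lies in exactly $\binom{k-1}{n-1}$ of the subsets $I$, we get
$$\binom{k-1}{n-1}\,\lambda_d(\mathcal{P})=\sum_{|I|=n}\lambda_d(\mathcal{P}_I)\le\binom{k}{n}\,\Lambda_1 ,$$
and dividing by $\binom{k-1}{n-1}$ together with the identity $\binom{k}{n}/\binom{k-1}{n-1}=\frac{k}{n}$ yields $\lambda_d(\mathcal{P})\le\frac{k}{n}\Lambda_1$. Letting $\varepsilon\to 0$ gives $l_d(k)\le\frac{k}{n}\Lambda_1$; the computation for $f_d$ is word-for-word the same with $\Lambda_2$ replacing $\Lambda_1$.

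I do not expect a genuine obstacle here, as the argument is elementary. The only points requiring a line of care are the passage to an $\varepsilon$-extremal configuration (the suprema defining $l_d$ and $f_d$ need not be attained) and the verification that conditions (a), (b), and the no-integral-distance property are all hereditary when passing to a sub-collection of connected components --- and this holds because each of these properties is monotone under set inclusion or is checked component by component.
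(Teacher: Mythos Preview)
Your proof is correct and follows essentially the same averaging/double-counting argument as the paper: form all $\binom{k}{n}$ sub-unions of $n$ components, bound each by $\Lambda_i$, and use that every component appears in $\binom{k-1}{n-1}$ of them. Your version is in fact slightly more careful than the paper's, since you make explicit the passage to an $\varepsilon$-extremal configuration (the supremum need not be attained) and spell out why the defining properties are hereditary under passing to sub-collections.
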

\begin{proof}
  Let $\mathcal{P}$ be a $d$-dimensional open set with \highlighttt{corresponding property in either case and $k\ge n$} connected components.
  The volume of each of the ${k\choose n}$ different \highlighttt{unions} of $n$ connected components \highlighttt{inheriting these properties}
  is at most $\Lambda_i$. Since each connected component occurs exactly ${{k-1}\choose{n-1}}$ times in those \highlighttt{unions,} the stated
  \highlighttt{inequalities hold}.
\end{proof}

\section{\highlighttt{Unions} of open $\mathbf{d}$-dimensional balls}
\label{sec_collection_of_balls}
Here we consider open point sets \highlighttt{$\mathcal{P}$ that are unions of $n$ disjoint}  $d$-dimensional open balls of diameter at most
\highlighttt{$1$ each} such that \highlighttt{they} either do \highlighttt{not} contain a pair of points with integral distance or \highlighttt{intersect}
each line \highlighttt{in the intervals with total length} at most \highlighttt{$1$}. As introduced in the previous section, we denote the supremum of possible volumes \highlighttt{of such $\mathcal{P}$} by \highlighttt{$f_d^\circ(n)$} and \highlighttt{$l_d^\circ(n)$} respectively.

In dimension \highlighttt{$1$} we can consider one open interval of length $1-\varepsilon$ and $n-1$ open intervals of length
$\frac{\varepsilon}{n}$, where $1>\varepsilon>0$, arranged in \highlighttt{a} unit interval so that they are pairwise \highlighttt{disjoint}.
\highlighttt{Clearly the set does not have a} pair of points an integral distance apart and the total length of the $n$ intervals tends to $1$, as
$\varepsilon$ approaches \highlighttt{$0$}. \highlighttt{It follows from Theorem~\ref{thm_line_intersection} that} $f^\circ_1(n)=l^\circ_1(n)=1$
\highlighttt{for all $n$}. For $n=1$, \highlighttt{by the isodiametric inequality, only the volumes of $d$-dimensional open balls of diameter $1$
attain the maximum value} $f^\circ_d(1)=l^\circ_d(1)=\lambda_d\!\left(B_d\right)$.

\begin{lemma}
  \label{lemma_upper_bound_circle}
   $l^\circ_d(n)\le \max\!\left(1,\frac{n}{2^d}\right)\cdot\lambda_d\!\left(B_d\right)$ for all $d,n\in\mathbb{N}$.
\end{lemma}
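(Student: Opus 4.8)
The plan is to bound the volume of $\mathcal{P}$, a union of $n$ disjoint open $d$-dimensional balls of diameter at most $1$ satisfying condition (b) — that every line meets $\mathcal{P}$ in a set of total length at most $1$. First I would dispose of the easy regime: if some single ball has diameter close to $1$, its volume is already close to $\lambda_d(B_d)$, and the $\max$ with $1$ on the right-hand side is there precisely to cover the case $n \le 2^d$, where a single unit ball (or a handful of them whose combined contribution the averaging of Lemma~\ref{lemma_averaging} cannot beat) is extremal. So the substance is the bound $l^\circ_d(n) \le \frac{n}{2^d}\lambda_d(B_d)$, which I would aim to prove directly for each fixed configuration and then take the supremum.

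The key step is a \emph{pigeonhole/covering argument on a line}. Let the $n$ balls have radii $r_1,\dots,r_n$ (so diameters $2r_i \le 1$). Condition (b) says that \emph{every} line $\mathcal{L}$ satisfies $\sum_i \lambda_1(\mathcal{L}\cap B_i) \le 1$; since a line through the center of ball $B_i$ in a direction such that it misses the others would give $2r_i \le 1$, but more usefully, if a single line passes through the centers of a subset $T$ of the balls, then $\sum_{i\in T} 2r_i \le 1$. The real leverage comes from integrating: for a direction $u \in S^{d-1}$, project everything onto the hyperplane $u^\perp$; the fibers of $\mathcal{P}$ over $u^\perp$ are the line intersections, each of total length $\le 1$, so by Fubini $\lambda_d(\mathcal{P}) = \int_{u^\perp} \lambda_1(\mathcal{P}\cap (t+\mathbb{R}u))\,dt \le \lambda_{d-1}(\pi_u(\mathcal{P}))$, the $(d-1)$-volume of the shadow. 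Thus $\lambda_d(\mathcal{P})$ is at most the area of the projection in \emph{every} direction. Now the projection of the ball $B_i$ of radius $r_i$ is a $(d-1)$-ball of radius $r_i$, so $\lambda_{d-1}(\pi_u(\mathcal{P})) \le \sum_i \kappa_{d-1} r_i^{d-1}$ where $\kappa_{d-1}$ is the volume of the unit $(d-1)$-ball — but this is a weak bound; the point is to combine it with the constraint. I would express $\lambda_d(\mathcal{P}) = \sum_i \kappa_d r_i^d = \sum_i \kappa_d r_i \cdot r_i^{d-1}$ and use $r_i \le 1/2$ termwise to get $\lambda_d(\mathcal{P}) \le \frac{1}{2}\kappa_d \sum_i r_i^{d-1}$; then I need $\sum_i r_i^{d-1} \le \frac{n}{2^{d-1}}\cdot\frac{\kappa_{d-1}}{?}$ — hmm, this needs care, because $\frac{n}{2^d}\lambda_d(B_d) = \frac{n}{2^d}\cdot\frac{\kappa_d}{2^d}$... let me instead observe the cleaner route: each individual ball contributes volume $\kappa_d r_i^d \le \kappa_d (1/2)^d = \lambda_d(B_d)$, so $\lambda_d(\mathcal{P}) \le n\lambda_d(B_d)$ trivially, and the factor $2^{-d}$ improvement must come from the line condition forbidding the balls from \emph{all} being large simultaneously — i.e. the shadow bound in the direction that sees the most overlap.

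The main obstacle — and the part I would spend the most effort on — is making the interaction between condition (b) and the ball sizes quantitatively sharp enough to yield exactly $\frac{n}{2^d}$. The cleanest argument I can see: pick the line direction $u$ and a translate maximizing the number of balls hit; more robustly, average over translates. For a fixed direction $u$, $\int_{u^\perp}\bigl(\#\{i : t \text{ meets } B_i\}\bigr)\,dt = \sum_i \lambda_{d-1}(\pi_u B_i) = \kappa_{d-1}\sum_i r_i^{d-1}$, while simultaneously $\int_{u^\perp} \sum_i \lambda_1(B_i \cap (t+\mathbb{R}u))\,dt = \lambda_d(\mathcal{P}) = \kappa_d \sum_i r_i^d$, and condition (b) bounds the \emph{integrand} of the latter by $1$ pointwise. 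Combining: on the set of $t$ where the fiber is nonempty, we have both a length bound of $1$ and a packing structure. I would then use that for a single ball the ratio (fiber length at parameter $t$)/(does $t$ hit the ball) integrates to $\frac{\kappa_d r_i^d}{\kappa_{d-1} r_i^{d-1}} = r_i \cdot \frac{\kappa_d}{\kappa_{d-1}}$, the "average chord length," and that $\sum_i$ of these weighted appropriately, together with $r_i \le 1/2$ and the pointwise-$1$ constraint forcing $\lambda_{d-1}(\pi_u\mathcal{P}) \le 1 \cdot (\text{measure of nonempty fibers})$ is still not automatically $\le 1$. Honestly, the crux is recognizing that $\lambda_d(\mathcal{P}) \le \lambda_{d-1}(\pi_u(\mathcal{P}))$ for all $u$ (immediate from Fubini and (b)), that $\pi_u(\mathcal{P})$ is a union of $n$ disjoint $(d{-}1)$-balls of diameter $\le 1$, and that \emph{its} line intersections in direction $v \perp u$ within $u^\perp$ are again bounded by $1$ — so the problem \emph{descends one dimension}, and the bound follows by induction on $d$ with base case $d=1$ giving $l^\circ_1(n)=1$: each projection step either keeps the relevant inequality or multiplies the ball count, and after $d$ steps one lands at a $1$-dimensional configuration of total length $\le 1$ while having "spent" a factor $2$ per dimension from $r_i \le 1/2$, yielding $\le \frac{n}{2^d}\cdot \text{const}$, which I would reconcile with $\lambda_d(B_d) = \kappa_d/2^d$ to hit the stated constant exactly. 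The $\max(1,\cdot)$ is then just the observation that $l^\circ_d(n) \ge l^\circ_d(1) = \lambda_d(B_d)$ always, so the bound is only informative when $n \ge 2^d$.
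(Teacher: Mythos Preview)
Your proposal does not arrive at a complete argument, and the route you eventually settle on --- induction on the dimension via projection --- is broken. The step ``$\pi_u(\mathcal{P})$ is a union of $n$ disjoint $(d{-}1)$-balls of diameter $\le 1$, and its line intersections in direction $v\perp u$ within $u^\perp$ are again bounded by $1$'' fails on both counts. First, disjoint $d$-balls can have overlapping shadows, so the projection need not be a disjoint union of $(d{-}1)$-balls and the inductive hypothesis does not apply. Second, condition~(b) is \emph{not} inherited by the projection: for a line $\ell\subset u^\perp$, the length $\lambda_1(\pi_u(B_i)\cap\ell)$ equals the \emph{supremum} over lifts $\ell+tu$ of $\lambda_1(B_i\cap(\ell+tu))$, and summing these suprema over $i$ can exceed $1$ even though every individual lift satisfies $\sum_i\lambda_1(B_i\cap(\ell+tu))\le 1$. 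So the descent does not go through, and nothing in the earlier shadow/average-chord computations was brought to a conclusion either.

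You actually wrote down the key fact and then abandoned it: ``if a single line passes through the centers of a subset $T$ of the balls, then $\sum_{i\in T}2r_i\le 1$.'' Any two centers lie on a common line, so this gives $X_i+X_j\le 1$ for \emph{every} pair of diameters. Ordering $X_1\le\dots\le X_n$, this forces either $X_n\le\tfrac12$ (whence $\sum X_i^d\le n/2^d$) or $X_i\le 1-X_n$ for all $i<n$, reducing the problem to maximizing the single-variable function $g(x)=x^d+(n-1)(1-x)^d$ on $[\tfrac12,1]$; convexity puts the maximum at an endpoint, yielding $\max(1,n/2^d)$. This is the paper's argument, and it is where the $\max(1,\cdot)$ genuinely comes from --- not, as you suggest at the end, from the lower bound $l^\circ_d(n)\ge l^\circ_d(1)$, which is irrelevant to proving an upper bound.
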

\begin{proof}
  Consider $n$ disjoint open $d$-dimensional balls with diameters $X_1\le 1,\cdots,X_n\le 1 $, where we can assume w.l.o.g.\ that $X_1\le \dots\le X_n\le 1$.
  Clearly in dimension $1$ we have $l_1^{\circ}(n)=l_1(n)=1=\lambda_1(B_1)$ for all $n\in\mathbb{N}$, and for all dimensions $d$, we have 
  $l_d^{\circ}(1)=l_d(1)=\lambda_d(B_d)$, so in the cases where either $d$ or $n$ is $1$ the stated inequality holds.  Hence we can assume that
  $d\ge 2$ and $n\ge 2$. \highlighttt{Then} by Theorem~\ref{thm_line_intersection}(i) we have $X_i+X_j\le 1$ for all $1\le i<j\le n$.
  If $X_n\le \frac{1}{2}$, then $\sum\limits_{i=1}^n X_i^d \le \frac{n}{2^d}$, so \highlighttt{the required inequality holds}. Otherwise we have $X_i\le 1-X_n$
  and \highlighttt{it remains to maximize} the function $g_d(x):=x^d+(n-1)(1-x)^d$ \highlighttt{with domain} $\left[\frac{1}{2},1\right]$. \highlighttt{Since}
  $g_d(x)''=d(d-1)x^{d-2}+d(d-1)(n-1)(1-x)^{d-2}>0$, every inner local extremum \highlighttt{of $g_d$ is} a minimum, so the global maximum 
  \highlighttt{of $g_d$} is attained at the boundary of the domain. Finally, we compute $g_d(1)=1$, $g_d\left(\frac{1}{2}\right)=\frac{n}{2^d}$, 
  \highlighttt{so} the \highlighttt{lemma} follows.
\end{proof}

\noindent
\highlighttt{\textbf{Remark.} The} special case of balls of diameter $\frac{1}{2}$ is directly related to point sets with
pairwise integral distances. Let $\mathcal{P}$ be \highlighttt{the union of $n$} $d$-dimensional open balls of diameter $\frac{1}{2}$ 
\highlighttt{each} without a pair of points an integral distance apart. Then the distances between the centers of the
balls $i$ and $j$ in some enumeration must be of the form $d_{ij}+\frac{1}{2}$ for \highlighttt{some} integers $d_{ij}$. By dilation with a factor of two we obtain 
\highlighttt{the set of size $n$ of the centers of the balls with pairwise odd integral distances. However, it has been shown in} \cite{0277.10021}
\highlighttt{that for such sets} $n\le d+2$, where equality holds
if and only if $d+2\equiv 0\pmod {16}$. The \highlight{exact} maximum number of odd integral distances
between points in the plane \highlight{has been} determined in \cite{0858.52007}.

\begin{theorem}
  \label{thm_l_circ}
   $l_d^\circ(n)=\max\!\left(1,\frac{n}{2^d}\right)\cdot\lambda_d\!\left(B_d\right)$ for all $n$ and  $d\ge 2$.
\end{theorem}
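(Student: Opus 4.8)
By Lemma~\ref{lemma_upper_bound_circle} we already have the upper bound $l_d^\circ(n)\le \max\!\left(1,\frac{n}{2^d}\right)\cdot\lambda_d\!\left(B_d\right)$, so the entire task is to construct, for every $d\ge 2$ and $n$, unions of $n$ disjoint open $d$-dimensional balls of diameter at most $1$ satisfying condition (b) (total length of intersection with every line at most $1$) whose volume comes arbitrarily close to the claimed value. The plan is to split into the two regimes dictated by the $\max$. When $n\le 2^d$, the target is $\lambda_d(B_d)$, and a single open ball of diameter $1$ together with $n-1$ open balls of negligible diameter, packed so that no line meets more than a total length of $1$, already works; the only thing to check is that $n-1$ arbitrarily small balls can be squeezed in without violating (b), which is easy since shrinking them makes their contribution to any line negligible and they can be placed far from each other.

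The substantive case is $n\ge 2^d$, where the target $\frac{n}{2^d}\lambda_d(B_d)$ is attained in the limit by $n$ balls each of diameter approaching $\frac12$. The constraint from Theorem~\ref{thm_line_intersection}(i) is that any line meets the union in total length at most $1$; since each ball has diameter $\frac12-\varepsilon$, a line is allowed to pass through the interiors of at most two of them (and if it meets two, it must essentially be tangentially grazing, or the two chords must sum to at most $1$). So the construction must arrange $n$ balls of diameter just under $\frac12$ so that no line passes through the interiors of three of them simultaneously. The natural idea is to place the centers at the vertices of a configuration in "general position with respect to lines" — e.g.\ along a moment-like curve, or more simply on a circle (for the relevant planar cross-sections) or a sphere of large radius, scaled so consecutive centers are more than $\frac12$ apart, then take balls of diameter $\frac12-\varepsilon$. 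One then verifies that no line stabs three of these balls: a line hitting three balls of radius $r<\frac14$ would force three of the centers to lie within distance $r$ of a common line, which can be excluded by choosing the center configuration so that no three centers are within $\frac14$ of any common line. Such configurations exist for any $n$ in $\mathbb{R}^d$, $d\ge 2$ (a generic placement works; one can make this explicit via a curve with no three points near-collinear, e.g.\ a suitable arc or the vertices of a large regular polygon for $d=2$ and its analogues in higher $d$).

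After fixing such a center set, I would bound the total line-intersection length: a line meeting only one ball contributes at most $\frac12-\varepsilon<1$; a line meeting exactly two balls of diameter $\frac12-\varepsilon$ contributes at most $2(\frac12-\varepsilon)=1-2\varepsilon<1$; and by the previous paragraph no line meets three. Hence condition (b) holds, the set has $n$ components each of diameter $\frac12-\varepsilon\le 1$, and its volume is $n\,(\tfrac12-\varepsilon)^d\lambda_d(B_d)\to \frac{n}{2^d}\lambda_d(B_d)$ as $\varepsilon\to 0$. Combined with Lemma~\ref{lemma_upper_bound_circle}, this proves $l_d^\circ(n)=\frac{n}{2^d}\lambda_d(B_d)$ in this regime and completes the theorem.

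The main obstacle I anticipate is the "no line stabs three balls" step: making precise that a suitable center configuration exists in every dimension (and, ideally, giving an explicit one rather than a genericity argument), and handling the borderline case where a line is nearly tangent to two balls while grazing a third. I expect this to reduce cleanly to a statement about points no three of which lie within $\frac14$ of a common line, which is robust under small perturbations and hence achievable; the rest is routine volume bookkeeping and the $\varepsilon\to 0$ limit.
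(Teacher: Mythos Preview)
Your approach is essentially the paper's: the upper bound is Lemma~\ref{lemma_upper_bound_circle}, the first regime is handled by one large ball plus $n-1$ tiny ones, and the second regime by $n$ balls of diameter close to $\tfrac12$ placed so that no line meets three of them. The paper makes both constructions explicit: for the first it nests everything inside an open unit ball (so condition~(b) is automatic), and for the second it takes the centers at the vertices of a regular $n$-gon of large circumradius~$k$, with balls of diameter exactly $\tfrac12$, noting that for $k$ large enough every line meets at most two of them.

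One small slip to fix: in your first construction you cannot take the large ball of diameter exactly~$1$. The line through its center already meets it in length~$1$, so any additional small ball on that line pushes the total past~$1$ and violates~(b); equivalently, a valid $n$-component configuration with volume strictly exceeding $\lambda_d(B_d)$ would contradict the upper bound you just invoked. Use diameter $1-\varepsilon$ (as the paper does) and let $\varepsilon\to 0$. With that correction your argument is complete, and the ``no line stabs three'' concern you flag is precisely what the large-circumradius regular $n$-gon dispatches without any genericity argument.
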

\begin{proof}
  \highlight{By} Lemma~\ref{lemma_upper_bound_circle} it \highlighttt{suffices} to provide \highlighttt{configurations whose volumes} (asymptotically) 
  \highlighttt{attain} the upper bound.
  
  For $1>\varepsilon>0$, we consider \highlighttt{the union of} one $d$-dimensional open ball of diameter $1-\varepsilon$ and $n-1$ 
  \highlighttt{disjoint} open balls of diameter $\frac{\varepsilon}{n-1}$ arranged in the interior of an open ball of diameter $1$. 
  As $\varepsilon$ approaches \highlighttt{$0$,} the volume of the \highlighttt{union} tends to $\lambda_d\!\left(B_d\right)$.
  
  For the remaining part we consider \highlighttt{the union of} $n$ open $d$-dimensional balls with diameter $\frac{1}{2}$ \highlighttt{centered} at
  the \highlighttt{vertices} of a regular $n$-gon with \highlighttt{circumradius} $k$. \highlighttt{Clearly, for $k$} large enough,
  \highlighttt{every line hits at most two balls}.
\end{proof}

An alternative construction would be the union of $n$ open $d$-dimensional balls of diameter $\frac{1}{2}$ centered at  $\left(i\cdot k,i^2,0,\dots,0\right)$ for $1\le i\le n$.
If $k$ is large enough, then again there is no line intersecting three or more balls.

\begin{corollary}
  \label{col_small_n}
   $f_d^\circ(n)=l_d^\circ(n)=\lambda_d\!\left(B_d\right)$ for all $d\ge 2$ and $n\le 2^d$.
\end{corollary}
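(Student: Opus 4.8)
The goal is to show $f_d^\circ(n)=l_d^\circ(n)=\lambda_d(B_d)$ for $d\ge 2$ and $n\le 2^d$. The plan is to sandwich both quantities between two bounds that coincide in this range. For the upper bound, I would invoke Lemma~\ref{lemma_upper_bound_circle} together with the inequality $f_d^\circ(n)\le l_d^\circ(n)$ noted right after its statement: since $n\le 2^d$ we have $\frac{n}{2^d}\le 1$, so $\max(1,\frac{n}{2^d})=1$ and hence $l_d^\circ(n)\le\lambda_d(B_d)$, which automatically forces $f_d^\circ(n)\le\lambda_d(B_d)$ as well.

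For the matching lower bound I need configurations of $n$ disjoint open balls of diameter at most $1$ whose volume approaches $\lambda_d(B_d)$ and which simultaneously avoid integral distances (the stronger $f_d^\circ$ requirement, which implies the $l_d^\circ$ requirement). The natural choice is the same family used in the proof of Theorem~\ref{thm_l_circ}: take one open ball of diameter $1-\varepsilon$ together with $n-1$ disjoint open balls of diameter $\frac{\varepsilon}{n-1}$, all packed inside an open ball of diameter $1$. As $\varepsilon\to 0$ the total volume tends to $\lambda_d(B_d)$. The point is that this configuration does not merely satisfy condition (b); it actually avoids integral distances, because the entire union lies in an open ball of diameter $1$, so any two of its points are at distance strictly less than $1$, and there is no positive integer in $(0,1)$. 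Hence this single family witnesses the lower bound for \emph{both} $f_d^\circ(n)$ and $l_d^\circ(n)$, giving $f_d^\circ(n)\ge\lambda_d(B_d)$ and $l_d^\circ(n)\ge\lambda_d(B_d)$.

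Combining the two directions yields $\lambda_d(B_d)\le f_d^\circ(n)\le l_d^\circ(n)\le\lambda_d(B_d)$, so all the inequalities are equalities, which is the claim. There is essentially no obstacle here: the corollary is just the observation that the threshold $n=2^d$ in Lemma~\ref{lemma_upper_bound_circle} is exactly where the two branches of the maximum meet, and that below it the trivial one-big-ball construction is already optimal and happens to avoid integral distances for the cheap reason of fitting inside a diameter-$1$ ball. The only thing worth spelling out carefully is that the packing of one large ball of diameter $1-\varepsilon$ and $n-1$ tiny balls inside a ball of diameter $1$ is genuinely possible (which it plainly is for $\varepsilon$ small, placing the big ball slightly off-center and the tiny ones in the remaining sliver), and that disjointness of the open balls is what we need — it is.
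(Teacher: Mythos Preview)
Your proof is correct and follows essentially the same approach as the paper: the upper bound comes from Lemma~\ref{lemma_upper_bound_circle} (or equivalently Theorem~\ref{thm_l_circ}) combined with $f_d^\circ(n)\le l_d^\circ(n)$, and the lower bound comes from the one-big-ball-plus-tiny-balls construction in the proof of Theorem~\ref{thm_l_circ}, together with your key observation that this configuration already avoids integral distances because it sits inside an open ball of diameter~$1$. The paper leaves the corollary without an explicit proof, treating it as immediate from Theorem~\ref{thm_l_circ}, so your written-out sandwich argument is exactly what is implicit there.
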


It turns out that, in fact, the equalities $f_d^\circ(n)=l_d^\circ(n)=\max\!\left(1,\frac{n}{2^d}\right)\cdot\lambda_d\!\left(B_d\right)$ 
hold \highlighttt{in} all dimensions $d\ge 2$. To explain the underlying idea, we first consider the special case where $d=2$ and $n=5$,
i.e.\ the first case \highlighttt{that} is not covered by Corollary~\ref{col_small_n}.

\begin{lemma}
  \label{lemma_2_5_circ}
  $$f_2^\circ(5)=\frac{5\pi}{16}\approx 0.9817477.$$
\end{lemma}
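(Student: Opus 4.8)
The plan is to combine the upper bound that is already in hand with a one‑parameter family of configurations that realises it in the limit. From Lemma~\ref{lemma_upper_bound_circle} applied with $d=2$, $n=5$ we get $f_2^\circ(5)\le l_2^\circ(5)\le\max\!\bigl(1,\tfrac54\bigr)\cdot\lambda_2(B_2)=\tfrac54\cdot\tfrac{\pi}{4}=\tfrac{5\pi}{16}$, so everything reduces to constructing, for every $\varepsilon>0$, a union of five pairwise disjoint open disks of diameter $\tfrac12-\varepsilon$ avoiding integral distances; its area is $5\pi(\tfrac12-\varepsilon)^2/4$, which tends to $\tfrac{5\pi}{16}$ as $\varepsilon\to0$.

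Before giving the construction I would explain why the naive choice of diameter exactly $\tfrac12$ cannot work. If $\mathcal{C}_1,\dots,\mathcal{C}_5$ are open disks of diameter $\tfrac12$ with centre distances $\delta_{ij}$, then $\dist(\mathcal{C}_i,\mathcal{C}_j)=\delta_{ij}-\tfrac12$ and $\diam(\mathcal{C}_i\cup\mathcal{C}_j)=\delta_{ij}+\tfrac12$, so by Lemma~\ref{lemma_prestage_width} the union avoids integral distances iff every open interval $(\delta_{ij}-\tfrac12,\delta_{ij}+\tfrac12)$ of length $1$ misses $\mathbb{Z}$, which forces each $\delta_{ij}$ to be an odd multiple of $\tfrac12$. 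Dilating by $2$ would then give five points in $\mathbb{R}^2$ with pairwise odd integral distances, contradicting the bound $n\le d+2=4$ quoted from \cite{0277.10021}. Passing to diameter $\tfrac12-\varepsilon$ relaxes the rigid condition "$\delta_{ij}\in\tfrac12+\mathbb{Z}$" to the slack condition "$\delta_{ij}\bmod 1\in(\tfrac12-\varepsilon,\tfrac12+\varepsilon)$" for every pair, and this extra room is exactly what makes a construction possible.

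For the construction itself I would, much as in the proof of Theorem~\ref{thm_l_circ} but with the scale fine‑tuned, place the five disks at the vertices of a regular pentagon of circumradius $R$. Such a pentagon realises only two pairwise distances, the side $s(R)=2R\sin\tfrac{\pi}{5}$ and the diagonal $\varphi\,s(R)$, where $\varphi=2\cos\tfrac{\pi}{5}=\tfrac{1+\sqrt5}{2}$ is the golden ratio. Hence the ten slack conditions of the previous paragraph hold simultaneously as soon as $s(R)\bmod1$ and $\varphi\,s(R)\bmod1$ both lie in $(\tfrac12-\varepsilon,\tfrac12+\varepsilon)$; by Lemma~\ref{lemma_prestage_width} the union $\mathcal{P}$ of the five disks then avoids integral distances, and for $R$ large the disks are disjoint and of diameter below $1$, so $\mathcal{P}$ is admissible for $f_2^\circ(5)$. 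That such scales $R$ exist, and are arbitrarily large, is where Weyl's equidistribution theorem enters: because the side‑to‑diagonal ratio $\varphi$ is irrational, the curve $R\mapsto\bigl(s(R)\bmod1,\varphi\,s(R)\bmod1\bigr)$ is equidistributed — in particular dense — on the torus $\mathbb{R}^2/\mathbb{Z}^2$, so it enters the neighbourhood $(\tfrac12-\varepsilon,\tfrac12+\varepsilon)^2$ of $(\tfrac12,\tfrac12)$ for an unbounded set of $R$. Letting $\varepsilon\to0$ gives $f_2^\circ(5)\ge\tfrac{5\pi}{16}$, which with the upper bound yields $f_2^\circ(5)=\tfrac{5\pi}{16}\approx0.9817477$.

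The genuinely delicate point is conceptual, not computational: recognising that the extremal diameter $\tfrac12$ is forbidden by the odd‑distance bound, so that one must approach it through a family of slightly smaller configurations and use a number‑theoretic tuning of the scale $R$ via Weyl's theorem. Everything else — the formula for the two pentagon distances, the translation of "avoids integral distances" into the slack condition through Lemma~\ref{lemma_prestage_width}, and disjointness for large $R$ — is routine and I would dispatch it in a sentence or two.
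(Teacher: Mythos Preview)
Your proof is correct and follows essentially the same route as the paper: place five disks of diameter just under $\tfrac12$ at the vertices of a regular pentagon and exploit the irrationality of the golden ratio $\varphi$ to tune the scale so that both the side and the diagonal land near a half-integer. The only cosmetic difference is that the paper fixes the side length to be $k+\tfrac12-2\varepsilon$ for an integer $k$ (so the side condition is automatic) and then uses one-dimensional equidistribution of $\{\varphi k\}$ to handle the diagonal, whereas you let the circumradius vary continuously and invoke density of the line of slope $\varphi$ on the $2$-torus to satisfy both conditions simultaneously; your preliminary remark on why diameter exactly $\tfrac12$ is impossible mirrors the paper's Remark following Lemma~\ref{lemma_upper_bound_circle}.
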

\begin{proof}
  For each integer $k\ge 2$ and $\frac{1}{7}>\varepsilon>0$, we consider a regular pentagon~$P$ with side length
  $\frac{1}{2}-2\varepsilon+k$ and the union $U$ of five open round discs of diameter $\frac{1}{2}-2\varepsilon$ centered at the \highlighttt{vertices} of $P$, 
  see Figure~\ref{fig_pentagon_circles}. Since each connected component of $U$ has \highlight{diameter} less than $1$, there is no pair of points an integral
  distance apart inside a connected component. For every two points $a$ and $b$ from different components, we either have
  $$
    k<\operatorname{dist}(a,b)<k+1,
  $$
  whenever the discs are adjacent with their centers \highlight{located} on an edge of $P$, \highlight{or}
  $$
    \left(\frac{1+\sqrt{5}}{2}\right)\cdot k+\frac{\sqrt{5}-1}{4}-2\varepsilon<\operatorname{dist}(a,b)<
    \left(\frac{1+\sqrt{5}}{2}\right)\cdot k+ \frac{3+\sqrt{5}}{4}-5\varepsilon
  $$
  otherwise.
   
  Let $[\alpha]$ stand for the positive fractional part of a real number~$\alpha$, i.e.
  $[\alpha]:=\alpha-\lfloor\alpha\rfloor$. If, given $\varepsilon>0$, \highlighttt{one} can find an integer~$k$ such that 
  $\left[\left(\frac{1+\sqrt{5}}{2}\right)\cdot k +\frac{\sqrt{5}-1}{4}-2\varepsilon\right]<3\varepsilon$,
  then the set $U$ with parameters $k$ and $\varepsilon$ does not contain a pair of points with integral distance.
  
  Since $\frac{1+\sqrt{5}}{2}$ is irrational, we can apply the equidistribution theorem, see e.g.\ \cite{1026.42001,46.0278.06},
  to \highlighttt{ensure} that $\left(\frac{1+\sqrt{5}}{2}\right)\cdot\mathbb{N}$ is dense (even uniformly distributed) in $[0,1)$. The
  same \highlighttt{holds} true if we \highlighttt{shift the set by the} fixed real number $\frac{\sqrt{5}-1}{4}-2\varepsilon>0$.  Thus
  we can find a suitable integer~$k$ for each $\varepsilon>0$. As $\varepsilon$ approaches  \highlighttt{$0$,} the total area of $U$ tends
  to  $\frac{5\pi}{16}$, which is best possible by Lemma~\ref{lemma_upper_bound_circle}.
\end{proof}

We \highlight{illustrate} this by a short list of suitable \highlighttt{values of} $k$: 
$\left[\frac{\sqrt{5}-1}{4}+\left(\frac{1+\sqrt{5}}{2}\right)\cdot \mathbf{6}\right]\approx 0.01722$,\\
$\left[\frac{\sqrt{5}-1}{4}+\left(\frac{1+\sqrt{5}}{2}\right)\cdot \mathbf{61}\right]\approx 0.00909$, 
$\left[\frac{\sqrt{5}-1}{4}+\left(\frac{1+\sqrt{5}}{2}\right)\cdot \mathbf{116}\right]\approx 0.00096$,\\
$\left[\frac{\sqrt{5}-1}{4}+\left(\frac{1+\sqrt{5}}{2}\right)\cdot \mathbf{1103}\right]\approx 0.00051$, and
$\left[\frac{\sqrt{5}-1}{4}+\left(\frac{1+\sqrt{5}}{2}\right)\cdot \mathbf{2090}\right]\approx 0.00005$.

\begin{figure}[htp]
  \begin{center}
    \includegraphics[width=9cm]{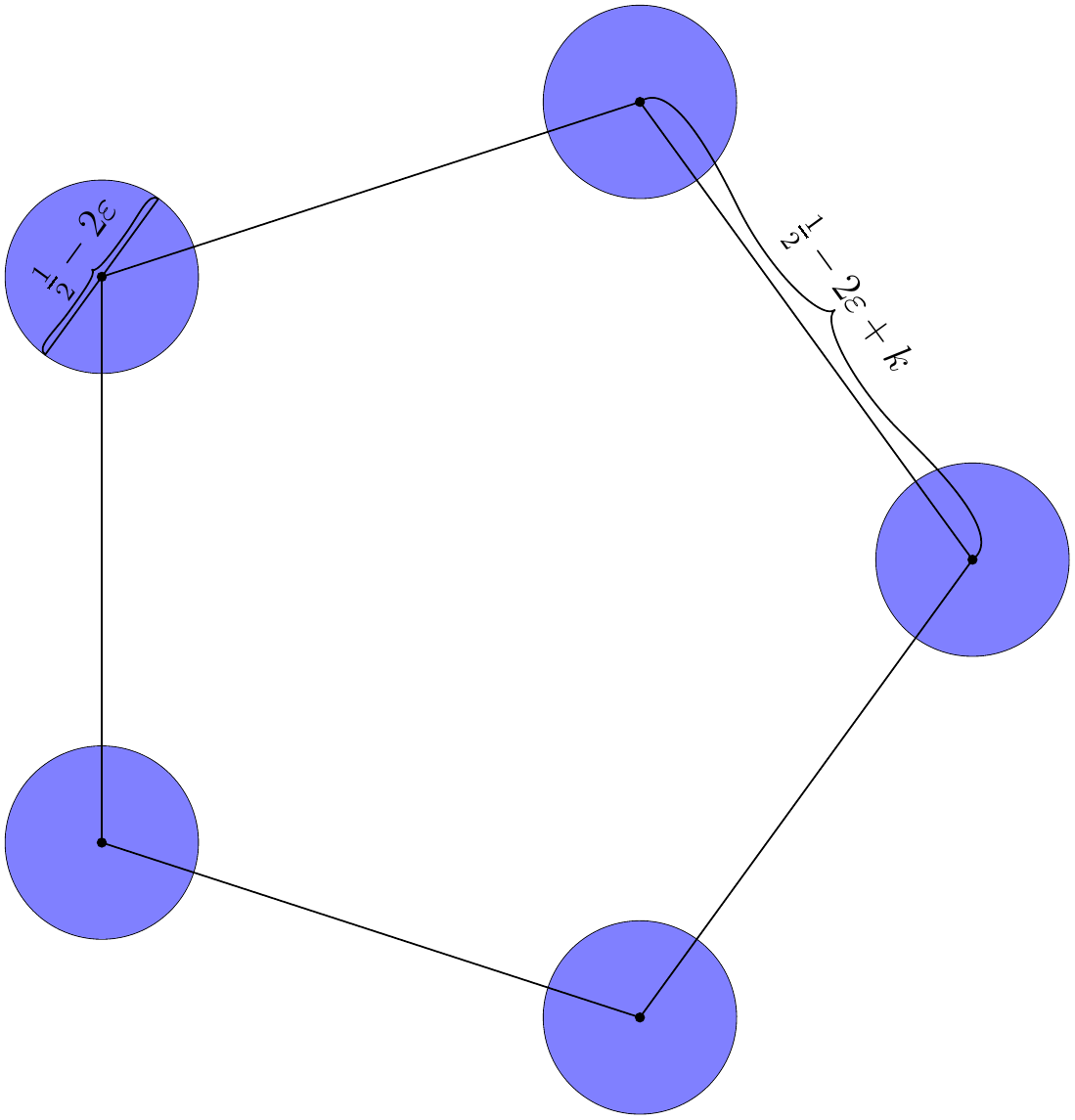}
    \caption{Integral distance avoiding \highlight{open} set for $d=2$ and $n=5$.}
    \label{fig_pentagon_circles}
  \end{center}
\end{figure}

We shall generalize Lemma~\ref{lemma_2_5_circ} to an arbitrary dimension $d\ge 2$ and arbitrary number $n$ of connected components.
The idea is to locate \highlighttt{the centers of} $n$ small $d$-dimensional open balls of diameter slightly less than $\frac{1}{2}$ at \highlighttt{some}
points $C_i$ in a two-dimensional sub-plane \highlighttt{so} that the set of different pairwise distances $\alpha_i$ between \highlighttt{their}
centers are linearly independent over the rational numbers, that is, the distances are either confluent or rationally independent. \highlighttt{The
appropriate} candidates for the center points $C_i$ \highlighttt{would be} the \highlighttt{vertices} of a regular $p$-gon, where $p$ is
an odd prime. We use a theorem of Mann, see \cite{0138.03102}, to prove \highlighttt{the desired property of} the set of distances. The condition that the
point set \highlighttt{in question} avoids integral distances can be translated into a system \highlighttt{of inequalities} of the form
$[\alpha_1\cdot k]<\varepsilon,\dots,[\alpha_l\cdot k]<\varepsilon$, where $k\in\mathbb{N}$, \highlighttt{and we are looking for} an integer $k$ such that
the \highlighttt{above} fractional \highlighttt{parts} of the scaled pairwise distances are arbitrarily small. By a theorem of Weyl, see e.g.\ \cite[Satz 3]{46.0278.06}
or a textbook on Diophantine Approximation like e.g.\ \cite{MR2953186}, such systems have solutions whenever the $\alpha_i$ are
\highlight{irrational and} linearly independent over $\mathbb{Q}$. \highlighttt{(Weyl actually proves equidistribution while we only need
denseness, a weaker result that Weyl himself attributes to Kronecker.)}

\highlight{Note that} the same construction, using the \highlighttt{vertices} of a regular hexagon, does not work. Indeed, there would be only three disinct values for the lengths $l_i$ of the diagonals, namely  $1$, $\sqrt{3}$, and $2$. The required inequalities
$$
  \left[\left(k+\frac{1}{2}-2\varepsilon\right)\cdot l_i-\left(\frac{1}{2}-2\varepsilon\right)\right]=\left[k\cdot l_i+\left(\frac{1}{2}l_i
  -\frac{1}{2}\right)+(2-2l_i)\cdot\varepsilon\right]<3\varepsilon,
$$
would trivially hold for $l_i=1$, but fail for $l_i=2$ and  $\varepsilon$ small enough.
We note in passing that quite recently Mann's theorem was used in another problem from Discrete \highlighttt{Geometry} see \cite{phd_zeeuw,rational_distances_and_angles}.

\begin{theorem} (Mann, 1965, \cite{0138.03102}) Suppose we have
 $$
   \sum_{i=1}^ka_i\zeta_i=0,
 $$
 with $a_i\in\mathbb{Q}$, $\zeta_i$ roots of unity, and no sub-relations $\sum\limits_{i\in I}a_i\zeta_i=0$, where $\emptyset\neq I\subsetneq \{1,...,k\}$. Then
 $$
   \left(\zeta_i/\zeta_j\right)^m=1
 $$
 for all $i,j$, where $m=\prod\limits_{p\text{ prime}\le k} p$.
\end{theorem}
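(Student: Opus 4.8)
The plan is to work inside the cyclotomic field $\mathbb{Q}(\mu_N)$, where $\mu_N$ denotes the group of $N$-th roots of unity and $N:=\operatorname{lcm}_i\operatorname{ord}(\zeta_i)$, and to exploit the hypothesis that there is no vanishing sub-relation by averaging over suitable Galois subgroups. First I would normalize: dividing the relation by $\zeta_1$ we may assume $\zeta_1=1$, which keeps the coefficients rational and preserves the absence of vanishing sub-relations (a sub-relation of the normalized identity pulls back, via multiplication by $\zeta_1$, to one of the original). After this reduction, $(\zeta_i/\zeta_j)^m=1$ for all $i,j$ is equivalent to $\zeta_i^m=1$ for all $i$, and since $m=\prod_{p\text{ prime}\le k}p$ is squarefree, the goal becomes: $N$ is squarefree and every prime dividing $N$ is at most $k$.

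For squarefreeness, suppose $p^2\mid N$, write $p^a\,\|\,N$ with $a\ge 2$, and let $H=\operatorname{Gal}\bigl(\mathbb{Q}(\mu_N)/\mathbb{Q}(\mu_{N/p})\bigr)$, a group of order $p$. I would apply the averaging operator $T=\sum_{\sigma\in H}\sigma$ to the relation. A short orbit computation gives, for a root of unity $\zeta$ of order $d\mid N$, that $T(\zeta)=p\,\zeta$ when $v_p(d)<a$ (then $\zeta\in\mathbb{Q}(\mu_{N/p})$ is $H$-fixed) and $T(\zeta)=0$ when $v_p(d)=a$ (then the $H$-orbit of $\zeta$ consists of $\zeta$ times the $p$ distinct $p$-th roots of unity, whose sum vanishes). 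Hence $T$ turns the relation into $p\sum_{i:\,v_p(\operatorname{ord}\zeta_i)<a}a_i\zeta_i=0$. This sub-sum contains the term with $\zeta_1=1$ (since $v_p(1)=0<a$), so it is non-empty; by the no-sub-relation hypothesis it is therefore the whole relation, which forces $v_p(\operatorname{ord}\zeta_i)<a$ for all $i$ and contradicts $a=\max_i v_p(\operatorname{ord}\zeta_i)$. Thus $N$ is squarefree.

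Now fix a prime $p\mid N$, so $p\,\|\,N$, and write each $\zeta_i=\omega_p^{\,j_i}\eta_i$ with $\omega_p$ a fixed primitive $p$-th root of unity, $j_i\in\{0,\dots,p-1\}$, and $\eta_i\in\mu_{N/p}$ the prime-to-$p$ part of $\zeta_i$. Grouping by the value of $j_i$ writes the relation as $\sum_{j=0}^{p-1}\omega_p^{\,j}L_j=0$ with $L_j:=\sum_{i:\,j_i=j}a_i\eta_i\in\mathbb{Q}(\mu_{N/p})$. Since $\Phi_p$ stays irreducible over $\mathbb{Q}(\mu_{N/p})$, the powers $1,\omega_p,\dots,\omega_p^{\,p-2}$ form a basis of $\mathbb{Q}(\mu_N)$ over $\mathbb{Q}(\mu_{N/p})$; substituting $\omega_p^{\,p-1}=-(1+\omega_p+\dots+\omega_p^{\,p-2})$ and comparing coordinates forces $L_0=L_1=\dots=L_{p-1}=:L$. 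If $L=0$, the $j=0$ layer reads $\sum_{i:\,j_i=0}a_i\zeta_i=0$, which contains $\zeta_1=1$, hence is the whole relation, so all $j_i=0$ and $p\nmid N$ --- a contradiction. Thus $L\neq 0$, so no class $\{i:j_i=j\}$ is empty, and the $k$ indices split into $p$ non-empty classes, giving $p\le k$. Combining this with squarefreeness, $N\mid\prod_{p\le k}p=m$, hence $\zeta_i^m=1$ for all $i$, and therefore $(\zeta_i/\zeta_j)^m=1$ for all $i,j$ in the original relation as well.

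I expect the main obstacle to be the routine but delicate bookkeeping around these two averaging steps: checking that $|H|=p$ (equivalently, that $\Phi_p$ is irreducible over $\mathbb{Q}(\mu_{N/p})$, which comes from $[\mathbb{Q}(\mu_N):\mathbb{Q}(\mu_{N/p})]=\varphi(p)=p-1$), that the Galois orbit sums vanish exactly in the asserted range of $v_p(\operatorname{ord}\zeta_i)$, and dealing with the small primes --- especially $p=2$, where $\mathbb{Q}(\zeta_n)=\mathbb{Q}(\zeta_{2n})$ for odd $n$ can slip in an extra root of unity and where $p\le k$ is anyway automatic since $k\ge 2$. A second subtlety is that the ``sub-relation'' produced by averaging may coincide with the whole relation, which is not itself forbidden; that is why at each step the contradiction is extracted from the presence of the normalizing term $\zeta_1=1$ in that sub-relation rather than from minimality alone.
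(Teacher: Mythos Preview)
The paper does not prove this statement at all: it is quoted as Mann's theorem with a citation to the original 1965 paper and then used as a black box in the proof of Lemma~\ref{lemma_diagonals_p_gon}. So there is no ``paper's own proof'' to compare against; you have supplied a proof where the authors simply invoked the literature.

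That said, your argument is essentially the standard Galois-theoretic proof of Mann's theorem and is correct in outline. The two averaging steps do exactly what is needed: the trace over $\operatorname{Gal}(\mathbb{Q}(\mu_N)/\mathbb{Q}(\mu_{N/p}))$ kills the terms with maximal $p$-adic valuation of order and fixes the others, forcing squarefreeness of $N$; then the linear-independence of $1,\omega_p,\dots,\omega_p^{p-2}$ over $\mathbb{Q}(\mu_{N/p})$ forces the $p$ residue classes to be equally weighted and hence non-empty, giving $p\le k$. One small slip in your self-assessment paragraph: you write that ``$|H|=p$'' is \emph{equivalent} to the irreducibility of $\Phi_p$ over $\mathbb{Q}(\mu_{N/p})$ with degree $\varphi(p)=p-1$, but these are two different facts used in two different steps. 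In the squarefree step (where $p^a\,\|\,N$ with $a\ge 2$) one has $[\mathbb{Q}(\mu_N):\mathbb{Q}(\mu_{N/p})]=\varphi(p^a)/\varphi(p^{a-1})=p$, whereas in the prime-bound step (where $p\,\|\,N$) the degree is $p-1$. Your actual argument uses the right value in each place; only the parenthetical remark conflates them.
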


The \highlight{vertices} of a regular $p$-gon with a circumcircle of radius $1$ centered at the origin \highlight{are given by}
$$
  \left(\cos\Bigl(\frac{j\cdot 2\pi}{p}\Bigr),\sin\Bigl(\frac{j\cdot 2\pi}{p}\Bigr)\right)
$$
for $0\le j\le p-1$. In standard complex number \highlight{notation} with $i:=\sqrt{-1}$ they coincide with the $p$th roots of unity $\zeta_j'=\cos\Bigl(\frac{j\cdot 2\pi}{p}\Bigr)+i\cdot\sin\Bigl(\frac{j\cdot 2\pi}{p}\Bigr)$.
The distance between the \highlight{vertices $0$ and $j$} is equal to $2\sin\Bigl(\frac{j\cdot 2\pi}{2p}\Bigr)$. Since
$\sin(\pi-\alpha)=\sin(\pi)$, there are only $(p-1)/2$ distinct distances in a regular $p$-gon, attained for $1\le j\le (p-1)/2$.
We note in passing that this number is not far away from the minimum number of distinct distances in the plane, which is \highlight{bounded below} by
$c\cdot\frac{p}{\log p}$ for a suitable constant $c$, see \cite{distinct_distances_plane}. We can express these distances
in terms of \highlight{$2p$th} roots of unity $\zeta_j=\cos\Bigl(\frac{j\cdot 2\pi}{2p}\Bigr)+i\cdot\sin\Bigl(\frac{j\cdot 2\pi}{2p}\Bigr)$
via
$$
  2\sin\Bigl(\frac{j\cdot 2\pi}{2p}\Bigr)=\frac{\zeta_j-\zeta_{2p-j}}{i}
$$
for all $1\le j\le \frac{p-1}{2}$.

\begin{lemma} 
  \label{lemma_diagonals_p_gon}
  \highlight{Given an odd prime $p$,} let $\alpha_j=\frac{\zeta_j-\zeta_{2p-j}}{i}$ for $1\le j\le \frac{p-1}{2}$, where the $\zeta_j$ are \highlight{$2p$th}
  roots of unity. Then the $\alpha_j$ are \highlight{irrational and} linearly independent over $\mathbb{Q}$.
\end{lemma}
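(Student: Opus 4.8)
The plan is to convert a hypothetical $\mathbb{Q}$-linear dependence among the $\alpha_j$ into a vanishing sum of roots of unity and then apply Mann's theorem. First I would fix the primitive $2p$-th root of unity $\omega:=e^{\pi i/p}=e^{2\pi i/(2p)}$ and note $\zeta_j=\omega^{j}$, $\zeta_{2p-j}=\omega^{-j}=\overline{\zeta_j}$, so that $\alpha_j=(\omega^{j}-\omega^{-j})/i=2\sin(j\pi/p)\in(0,2)$; moreover the $p-1$ roots of unity $\omega^{\pm 1},\dots,\omega^{\pm(p-1)/2}$ are pairwise distinct, since for $1\le j,j'\le\frac{p-1}{2}$ the exponents $\pm j,\pm j'$ are pairwise incongruent modulo $2p$ (their absolute differences lie strictly between $0$ and $p-1<2p$).

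Next, suppose $\sum_{j=1}^{(p-1)/2}a_j\alpha_j=0$ with $a_j\in\mathbb{Q}$ not all zero; clearing denominators we may assume $a_j\in\mathbb{Z}$. Multiplying by $i$ gives the vanishing relation $\sum_{j=1}^{(p-1)/2}a_j(\omega^{j}-\omega^{-j})=0$, a $\mathbb{Z}$-linear combination of the $2p$-th roots of unity above in which $\omega^{j}$ has coefficient $a_j$ and $\omega^{-j}$ has coefficient $-a_j$. Its support (the roots with nonzero coefficient) is nonempty and itself gives a vanishing partial sum, so among all nonempty subsets of the support whose partial sum vanishes I can pick one, $S$, of minimal cardinality $k$. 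Then $k\le p-1$, every coefficient occurring in $S$ is nonzero, and $S$ has no proper nonempty vanishing subsum, so (for $k\ge 2$) Mann's theorem applies to $S$.

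Mann's theorem then yields $(\omega^{a}/\omega^{b})^{m}=1$ for all exponents $a,b$ occurring in $S$, where $m=\prod_{q\text{ prime},\,q\le k}q$. Since $k\le p-1<p$ and $p$ is prime, $p\nmid m$; and $(\omega^{a-b})^{m}=1$ means $2p\mid m(a-b)$, hence $p\mid a-b$. But the exponents of $S$ lie in $\{\pm 1,\dots,\pm\frac{p-1}{2}\}$, so $|a-b|\le p-1<p$, forcing $a=b$ — impossible for two distinct roots in $S$. Therefore $k=1$, i.e.\ $S$ consists of a single term $c\,\omega^{a}$ with $c\ne 0$, which is absurd. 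Hence no nontrivial rational relation exists and the $\alpha_j$ are linearly independent over $\mathbb{Q}$.

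Finally, for irrationality: each $\alpha_j$ is a $\mathbb{Z}$-combination of roots of unity, hence an algebraic integer, so if $\alpha_j\in\mathbb{Q}$ it is a rational integer; as $0<\alpha_j<2$ this forces $\alpha_j=1$, i.e.\ $\sin(j\pi/p)=\tfrac12$, i.e.\ (since $0<j\pi/p<\pi/2$) $p=6j$, impossible for a prime $p$. (Alternatively, cite Niven's theorem.) I expect the main obstacle to be the bookkeeping around Mann's theorem — extracting a genuinely minimal vanishing subrelation and confirming its length $k$ stays below $p$, which is precisely what makes $p\nmid m$ and drives the argument. If one wants the slightly stronger conclusion that $1,\alpha_1,\dots,\alpha_{(p-1)/2}$ are linearly independent over $\mathbb{Q}$ (the form actually needed for the later appeal to Weyl's theorem), the same scheme works with $4p$-th roots of unity, at the price of an extra parity argument to discard the one potentially troublesome minimal subrelation of length $p$.
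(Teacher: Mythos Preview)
Your proof is correct and follows essentially the same route as the paper: reduce a putative rational dependence to a vanishing sum of $2p$th roots of unity, extract a minimal sub-relation, apply Mann's theorem, and use $k\le p-1$ to force the exponents to coincide modulo $p$, which the index range forbids. The only cosmetic differences are that the paper phrases the Mann consequence as $(\zeta_{j_1}/\zeta_{j_2})^2=1$ via $\gcd(2p,m)=2$ rather than your direct $p\mid(a-b)$, and it cites the Niven-type classification of rational values of $\sin(\pi q)$ for irrationality instead of your algebraic-integer argument.
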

\begin{proof}
  \highlight{A folklore result, see e.g.\ \cite{MR1526309}, states that $\sin(\pi q)$, where $q\in\mathbb{Q}$, is a rational number
  if and only if $\sin(\pi q)\in\left\{-1,-\frac{1}{2}, 0,\frac{1}{2},1\right\}$. Since $p$ is odd, this cannot \highlight{occur} in our context.
  \highlight{It} remains to show that the irrational numbers $\alpha_j$ are linearly independent over $\mathbb{Q}$.} Suppose to the contrary that there are
  rational numbers $b_j$ for $1\le j\le l\le\frac{p-1}{2}$ such that $\sum_{j=1}^{l} b_j\alpha_j=0$. \highlight{We then} have 
  $$\sum\limits_{j=1}^{l}\left(b_j\zeta_j-b_j\zeta_{2p-j}\right)=0.$$ Now let $J$ be a subset of those indices $j$, $2p-j$ such that $\sum_{j\in J} a_j\zeta_j=0$,
  where $a_j\in\{\pm b_j\}$, and no vanishing sub-combination. We have $|J|\le p-1$. \highlight{Hence by} Mann's Theorem $(\zeta_{j_1}/\zeta_{j_2})^2=1$ for all
  $j_1,j_2\in J$, since $$\gcd\Bigl(2p,\prod_{t\text{ prime}\le p-1} t\Bigr)=2.$$ This yields $j_2=j_1+p$ for $j_1<j_2$. Since $J$ is a subset of
  $$
    \left\{1,\dots,\frac{p-1}{2}\right\}\cup\left\{2p-\frac{p-1}{2},\dots,2p-1\right\},
  $$
  this is \highlight{impossible}, so the numbers $\alpha_j$ have to be linearly independent over $\mathbb{Q}$.
\end{proof}

\begin{theorem}
  \label{construction_f_circ_d_n}
    $f^\circ_d(n)=\max\!\left\{1,\frac{n}{2^d}\right\}\cdot\lambda_d\!\left(B_d\right)$ for all $n$ and $d\ge 2$.
\end{theorem}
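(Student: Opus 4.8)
The plan is to establish the two inequalities $f^\circ_d(n)\le \max\{1,n/2^d\}\cdot\lambda_d(B_d)$ and $f^\circ_d(n)\ge \max\{1,n/2^d\}\cdot\lambda_d(B_d)$ separately. The upper bound is essentially free: any union of $n$ disjoint open balls without a pair of points an integral distance apart satisfies condition (b) of $l_d(n)$ by Theorem~\ref{thm_line_intersection}(i), so $f^\circ_d(n)\le l^\circ_d(n)$, and Lemma~\ref{lemma_upper_bound_circle} (or Theorem~\ref{thm_l_circ}) gives the claimed bound. The regime $n\le 2^d$ is already handled by Corollary~\ref{col_small_n}, so the work lies entirely in constructing, for $n>2^d$ and every $\varepsilon>0$, a union of $n$ disjoint open balls of diameter just under $\tfrac12$, avoiding integral distances, whose total volume approaches $\tfrac{n}{2^d}\lambda_d(B_d)$.

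For the construction I would generalise the pentagon argument of Lemma~\ref{lemma_2_5_circ}. Fix an odd prime $p$; the key algebraic input is Lemma~\ref{lemma_diagonals_p_gon}, which tells us that the $(p-1)/2$ distinct diagonal lengths $\alpha_1,\dots,\alpha_{(p-1)/2}$ of a regular $p$-gon of circumradius $1$ are irrational and linearly independent over $\mathbb{Q}$. Scale the $p$-gon so its circumradius is some large real parameter $t$ (to be chosen), place the $n$ ball centres as follows: put clusters of balls near the $p$ vertices — but since we may need more than $p$ balls, I would instead take the centres to be the vertices of a regular $p$-gon and, at each vertex, a tiny tightly-packed cloud of extra balls of negligible diameter, exactly as in the second construction of Theorem~\ref{thm_l_circ} (one large-ish ball plus arbitrarily many negligible ones inside a ball of diameter $\tfrac12$). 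Actually the cleanest route, matching the statement's asymptotics, is: when $2^d\mid$ — no — simply take $n$ balls of diameter $\tfrac12-2\varepsilon$ centred at the vertices of the regular $p$-gon for the first $p$ of them and pack the remaining $n-p$ balls of the same diameter $\tfrac12-2\varepsilon$ around those vertices in fresh directions of the ambient $\mathbb{R}^d$, keeping all pairwise centre-distances equal to one of the values $(k+\tfrac12-2\varepsilon)\alpha_j$ or at most $1$ apart; each such distance must be made to have fractional part in $(\,\lceil r\rceil-r>\text{diam}$-gap$)$, i.e.\ we need $[(k+\tfrac12-2\varepsilon)\alpha_j-(\tfrac12-2\varepsilon)]<3\varepsilon$ simultaneously for all relevant $j$. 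Since the $\alpha_j$ are irrational and $\mathbb{Q}$-linearly independent, Weyl's theorem (in the denseness form attributed to Kronecker) guarantees an integer $k$ making all these fractional parts simultaneously less than $3\varepsilon$; then by Lemma~\ref{lemma_prestage_width} the configuration avoids integral distances. Letting $\varepsilon\to 0$ the volume tends to $n\cdot(\tfrac12)^d\lambda_d(B_d)=\tfrac{n}{2^d}\lambda_d(B_d)$, completing the lower bound and hence the theorem.

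The main obstacle — and the point where care is genuinely needed — is arranging the centres so that \emph{every} pair of centres realises a controlled distance. With only $p$ vertices of the $p$-gon we get exactly the $(p-1)/2$ diagonal lengths, which is fine; but to accommodate $n$ balls with $n$ possibly much larger than $p$, the extra centres must be positioned so that no new, uncontrolled distance value appears. The trick is to exploit the extra ambient dimensions: replace each single vertex $C_i$ of the $p$-gon by a cluster of centres lying within an open ball of diameter $\tfrac12-2\varepsilon$ — no, rather, one keeps all "real" balls of diameter $\tfrac12-2\varepsilon$ but shrinks the surplus ones: put one ball of diameter $\tfrac12-2\varepsilon$ at each of $\min(n,p)$ vertices, and if $n>p$ place the remaining balls, of vanishing diameter $\delta$, clustered at the vertices; these contribute negligibly to the volume but the bound $\tfrac{n}{2^d}\lambda_d(B_d)$ then forces us instead to keep them at diameter $\tfrac12-2\varepsilon$. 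Resolving this cleanly is the real content: I would handle it by iterating — use Lemma~\ref{lemma_diagonals_p_gon} for a prime $p\ge n$ and place all $n$ balls at $n$ of the $p$ vertices, so that only diagonal lengths occur; Bertrand's postulate furnishes such a prime, and then every pairwise centre distance is one of the $(p-1)/2$ controlled values $t\alpha_j$ with $t=k+\tfrac12-2\varepsilon$, the system of Weyl inequalities is finite, and the argument closes. The rest is the routine computation, parallel to Lemma~\ref{lemma_2_5_circ}, that the "between distance" and "diameter of union" bracket indeed excludes every integer once $[t\alpha_j - (\tfrac12-2\varepsilon)]<3\varepsilon$.
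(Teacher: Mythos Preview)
Your proposal, once it settles on the final route---choose an odd prime $p\ge n$, place the $n$ balls of diameter $\tfrac12-2\varepsilon$ at $n$ vertices of a regular $p$-gon, invoke Lemma~\ref{lemma_diagonals_p_gon} for the rational independence of the diagonal lengths, and apply Weyl's theorem to make all the relevant fractional parts simultaneously small---is correct and is exactly the argument the paper gives. The detours through clusters and vanishingly small auxiliary balls are unnecessary (and Bertrand's postulate is overkill: any prime $p\ge n$ will do), but the eventual strategy and the paper's coincide.
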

\begin{proof}
  Since $f_d^\circ(n)\le l_d^\circ(n)$, by Theorem~\ref{thm_l_circ} 
  $f^\circ_d(n)\le\max\!\left\{1,\frac{n}{2^d}\right\}\cdot\lambda_d\!\left(B_d\right)$. By Corollary 1 we can assume that $n>2^d$.
  For the construction we fix an odd prime $p$ with $p\ge n$.  For each integer $k\ge 2$ and each $\frac{1}{4}>\varepsilon>0$
  we consider a regular $p$-gon $P$ with side lengths $2k\cdot\sin\Bigl(\frac{\pi}{p}\Bigr)$, \highlight{i.e.\ with circumradius $k$}.
  At $n$ arbitrarily chosen \highlighttt{vertices} of the $p$-gon $P$ we place the centers of
  $d$-dimensional open balls with diameter $\frac{1}{2}-2\varepsilon$ and consider their union. Since each of the $n$ connected
  components of the union has a diameter less than $1$, there is no pair of points \highlighttt{with integral distance} inside the
  components. Now consider arbitrary points $a$ and $b$ from two different connected components (open balls). Let $\alpha$ stand
  for the distance between their centers. The triangle inequality yields
  $$
    \alpha-\left(\frac{1-4\varepsilon}{2}\right)<\dist(a,b)<\alpha+\left(\frac{1-4\varepsilon}{2}\right).
  $$
  Since all possible distances $\alpha$ are given by $2k\sin\Bigl(\frac{j\pi}{p}\Bigr)$ for $1\le j\le \frac{p-1}{2}$, we look
  for a solution of the system of inequalities
  $$
    \left[2k\cdot\sin\!\left(\frac{j\pi}{p}\right)-\frac{1}{2}+2\varepsilon\right]\le 4\varepsilon
  $$
  where $k\in\mathbb{N}$. By Lemma~\ref{lemma_diagonals_p_gon} the factors $2\sin\!\left(\frac{j\pi}{p}\right)$ are
  irrational and linearly independent over $\mathbb{Q}$, so by Weyl's Theorem \cite{46.0278.06} such
  systems admit solutions for all \highlighttt{$\varepsilon$}.
  
  Therefore, for every $0<\varepsilon<\frac{1}{4}$ we can choose a suitable value of $k$ and construct an open $n$-component set without
  pairs of points an integral distance apart
  with volume $n\left(\frac{1}{2}-2\varepsilon\right)^d\cdot\lambda_d\!\left(B_d\right)$. As $\varepsilon$ approaches 0,
  this volume tends to $\frac{n}{2^d}\cdot\lambda_d\!\left(B_d\right)$, completing the proof. 
\end{proof}

Thus, in the case of round connected components the values of $l_d^\circ(n)$ and $f_d^\circ(n)$ are completely determined. In the general case of arbitrary connected components the problem is more challenging for $n\ge 2$ and will be addressed in the following section.

\section{Bounds for $\mathbf{l_d(n)}$ and \highlightt{the exact value of} $\mathbf{f_d(n)}$}
\label{sec_bounds}

In dimension \highlighttt{$1$} we can consider the disjoint union $U$ of $n$ open intervals of length $\frac{1}{n}$ inside an open interval of length 1.
Obviously $U$ avoids integral distances and the line intersection property holds trivially for $U$ the total length of $U$ being 1 which is the largest possible by Theorem 1. Thus  $f_1(n)=l_1(n)=1$  for all $n$. 
For $n=1$, we have $f_d(1)=l_d(1)=\lambda_d\!\left(B_d\right)$ and only $d$-dimensional open balls of diameter \highlighttt{$1$} can have that large volume. For $n,d\neq 1$
the evaluation of $f_d(n)$ and $l_d(n)$ gets more involved. In Subsection~\ref{subsec_two_components} we treat the 2-component case $n=2$. As to the general case, we only could find some bounds for $l_d(n)$ in  Subsection~\ref{subsec_bounds_line_restricted}
and succeeded in determining the exact values of the function $f_d(n)$ in Subsection~\ref{subsec_bounds_anti_integral}.  

\subsection{Two components}
\label{subsec_two_components}

At first we find an upper bound for $f_d(2)$. To this end, note that the condition in Lemma~\ref{lemma_prestage_width} can be restated as follows:
$\diam(\mathcal{C}_1\cup \mathcal{C}_2)\le \left\lfloor\dist(\mathcal{C}_1,\mathcal{C}_2)\right\rfloor+1$. We further use
lemmas~\ref{lemma_prestage_width} and \ref{lemma_repultion_property}
to provide a structural property of the pairs of connected components $\mathcal{C}_1$, $\mathcal{C}_2$ of a $d$-dimensional open set $\mathcal{P}$ avoiding integral distances. By  Lemma~\ref{lemma_repultion_property} there exist
parallel hyperplanes $\mathcal{H}_2$ and $\mathcal{H}_3$ such that, possibly after  relabeling the components,
$\mathcal{C}_1$ is on the left hand side of $\mathcal{H}_2$, $\mathcal{C}_2$ is on the right hand side of $\mathcal{H}_3$,
and $\mathcal{H}_2$ is on the left hand side of $\mathcal{H}_3$. W.l.o.g.\ we can assume that $\dist(\mathcal{H}_2,\mathcal{H}_3)\le
\dist(\mathcal{C}_1,\mathcal{C}_2)$. By Lemma~\ref{lemma_prestage_width} there exist another pair of hyperplanes $\mathcal{H}_1$,
$\mathcal{H}_4$ parallel to $\mathcal{H}_2$ and $\mathcal{H}_3$ such that $\mathcal{C}_1$ is on the right hand side of
$\mathcal{H}_1$ and $\mathcal{C}_2$ is on the left hand side of $\mathcal{H}_4$, that is, $\mathcal{C}_1$ lies between
$\mathcal{H}_1$ and $\mathcal{H}_2$, $\mathcal{C}_2$ lies between $\mathcal{H}_3$ and $\mathcal{H}_4$. W.l.o.g.\ we can
assume that $\dist(\mathcal{H}_1,\mathcal{H}_4)\le\diam(\mathcal{C}_1\cup \mathcal{C}_2)$. Thus for $d_1:=\dist(\mathcal{H}_1,
\mathcal{H}_2)$ and $d_2:=\dist(\mathcal{H}_3,\mathcal{H}_4)$ we have $d_1+d_2\le 1$ by Theorem~\ref{thm_line_intersection}(i).
Clearly $d_1$ and $d_2$ are upper bounds for the widths of $\mathcal{C}_1$ and $\mathcal{C}_2$ respectively.

For a convex body $\mathcal{K}$ in $\mathbb{R}^d$ with diameter $D$ and minimum width $\omega$ an upper bound for its $d$-dimensional
volume $V$ has been found in \cite[Theorem 1]{1074.52004}, namely:
\begin{equation}
  \label{ie_isoperimetric_width}
  V\le \lambda_{d-1}(B_{d-1})\cdot D^d\int_0^{\arcsin \frac{\omega}{D}}\cos ^d\theta\,\mbox{d}\theta.
\end{equation}
Equality holds if and only if $\mathcal{K}$ is the $d$-dimensional spherical symmetric slice with diameter~$D$ and minimum
width~$\omega$. In the planar case some more inequalities relating several descriptive parameters of a convex set can be
found in \cite{0955.52007}. Since we will extensively use $d$-dimensional spherical symmetric slices with diameter $1$ and
width $\frac{1}{2}$, we denote them by $S_d$. Viewing $S_d$ as a truncated $d$-dimensional open ball of unit diameter we denote
the two congruent cut-off bodies by $C_d$ and call them {\it caps}. Thus $\lambda_d(B_d)=\lambda_d(S_d)+2\cdot\lambda_d(C_d)$ where
\begin{eqnarray}
  \lambda_d(S_d)&=& \lambda_{d-1}(B_{d-1})\int\limits_0^{\frac{\pi}{6}}\cos ^d\theta\,\mbox{d}\theta,\label{eq_S_d}\\
  \lambda_d(C_d)&=& \frac{1}{2}\cdot\left(\lambda_d(B_d)-\lambda_d(S_d)\right)\label{eq_C_d}.
\end{eqnarray}
In Table~\ref{table_values_slices_and_caps} we tabulated the first few exact values of the  volumes of $S_d$ and $C_d$ and refer to the Appendix, \highlight{ 
Subsection~\ref{subsec_volume_formulas},} for more information on these volumes as functions of $d$.

\begin{table}[htp]
  \begin{center}
    \begin{tabular}{lllll}
    \toprule
    $\mathbf{d}$              & 2                                                  & 3                                & 4 & 5\\ 
    $\mathbf{\lambda_d(S_d)}$ & $\frac{\sqrt{3}}{8}+\frac{\pi}{12}\approx 0.4783$  & $\frac{11\pi}{96}\approx 0.3600$ &
    $\frac{\pi}{384}\cdot\left(9\sqrt{3}+4\pi\right)\approx 0.2303$ & $\frac{203\pi^2}{15360}\approx0.1304$\\[1mm]
    $\mathbf{\lambda_d(C_d)}$ & $\frac{\pi}{12}-\frac{\sqrt{3}}{16}\approx 0.1535$ & $\frac{5\pi}{192}\approx 0.0818$ & 
    $\frac{\pi^2}{96}-\frac{3\sqrt{3}\pi}{256}\approx0.0390$& $\frac{53\pi^2}{30720}\approx0.0170$\\[1mm]
    \bottomrule
    \end{tabular}
    \caption{Values of $\lambda_d(S_d)$ and $\lambda_d(C_d)$ for small dimensions.}
    \label{table_values_slices_and_caps}
  \end{center}
\end{table}

\begin{lemma}
  \label{lemma_upper_bound_general_integral_n_2}
   $f_d(2)\le 2\lambda_d(S_d)$ for all $d\ge 2$.
\end{lemma}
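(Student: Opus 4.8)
The plan is to bound the volumes of the two components separately via the diameter--width isoperimetric inequality~(\ref{ie_isoperimetric_width}) and then to optimise over the admissible parameters. Let $\mathcal{P}$ be an open set with two connected components $\mathcal{C}_1,\mathcal{C}_2$ avoiding integral distances. First I would dispose of the case $\diam(\mathcal{C}_1\cup\mathcal{C}_2)\le 1$, which by Lemma~\ref{lemma_prestage_width} occurs in particular whenever $\dist(\mathcal{C}_1,\mathcal{C}_2)<1$: here the isodiametric inequality gives $\lambda_d(\mathcal{P})\le\lambda_d(B_d)$, and since both $\lambda_d(B_d)$ and $\lambda_d(S_d)$ equal $\lambda_{d-1}(B_{d-1})$ times the integral of $(1-u^2)^{(d-1)/2}$ over $[0,1]$, resp.\ $[0,1/2]$ (substitute $u=\sin\theta$ in (\ref{eq_S_d})), the required inequality $\lambda_d(B_d)\le 2\lambda_d(S_d)$ amounts to $\int_{1/2}^{1}(1-u^2)^{(d-1)/2}\,\mathrm{d}u\le\int_{0}^{1/2}(1-u^2)^{(d-1)/2}\,\mathrm{d}u$, which holds because the integrand is nonincreasing and the two intervals have equal length. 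So henceforth I assume $\diam(\mathcal{C}_1\cup\mathcal{C}_2)>1$ and adopt the structural setup established before the statement: there exist parallel hyperplanes confining $\mathcal{C}_i$ to a slab of width $d_i$, so that $\mbox{width}(\mathcal{C}_i)\le d_i$ with $d_1+d_2\le 1$, while $\diam(\mathcal{C}_i)\le 1$ by Lemma~\ref{lemma_diameter}.

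Next I would apply (\ref{ie_isoperimetric_width}) to the closed convex hull of $\mathcal{C}_i$, a convex body with the same diameter $D_i:=\diam(\mathcal{C}_i)\le 1$ and the same minimum width $\omega_i:=\mbox{width}(\mathcal{C}_i)$, where $\omega_i\le\min\{D_i,d_i\}$. Setting
$$g(D,\omega):=D^d\int_{0}^{\arcsin(\omega/D)}\cos^d\theta\,\mathrm{d}\theta,$$
this gives $\lambda_d(\mathcal{C}_i)\le\lambda_{d-1}(B_{d-1})\,g(D_i,\omega_i)$, hence $\lambda_d(\mathcal{P})\le\lambda_{d-1}(B_{d-1})\bigl(g(D_1,\omega_1)+g(D_2,\omega_2)\bigr)$. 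It remains to show $g(D_1,\omega_1)+g(D_2,\omega_2)\le 2\,g\!\left(1,\tfrac12\right)$ whenever $D_i\le1$, $0\le\omega_i\le\min\{D_i,d_i\}$ and $d_1+d_2\le1$, since $g\!\left(1,\tfrac12\right)=\int_0^{\pi/6}\cos^d\theta\,\mathrm{d}\theta$ and $\lambda_d(S_d)=\lambda_{d-1}(B_{d-1})\,g\!\left(1,\tfrac12\right)$ by (\ref{eq_S_d}).

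For the optimisation I would use three elementary facts about $g$. First, $g$ is nondecreasing in $D$ for fixed $\omega$: the body attaining equality in (\ref{ie_isoperimetric_width}) for parameters $(D,\omega)$ is a ball of diameter $D$ truncated by a symmetric slab of width $\omega$ — exactly how $S_d$ arises from $B_d$ — so for $D\le1$ it is, after centering, contained in the corresponding body of diameter $1$; equivalently, a one-line computation shows $\sin^{-d}\phi\int_0^{\phi}\cos^d\theta\,\mathrm{d}\theta$ is decreasing in $\phi$. Hence $g(D_i,\omega_i)\le g(1,\omega_i)$. Second, $g(1,\omega)=\int_0^{\arcsin\omega}\cos^d\theta\,\mathrm{d}\theta$ has derivative $\frac{\mathrm{d}}{\mathrm{d}\omega}g(1,\omega)=(1-\omega^2)^{(d-1)/2}$, which is nonnegative on $[0,1]$, so $g(1,\omega_i)\le g(1,d_i)$. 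Third, this same derivative is strictly decreasing on $[0,1)$ for $d\ge 2$, so $g(1,\cdot)$ is concave there; therefore $g(1,d_1)+g(1,d_2)\le 2\,g\!\left(1,\tfrac{d_1+d_2}{2}\right)$ by Jensen's inequality, and $\tfrac{d_1+d_2}{2}\le\tfrac12$ together with the monotonicity just noted yields $\le 2\,g\!\left(1,\tfrac12\right)$. Combining all of this gives $\lambda_d(\mathcal{P})\le 2\lambda_d(S_d)$, and passing to the supremum over $\mathcal{P}$ gives $f_d(2)\le 2\lambda_d(S_d)$.

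I do not expect a genuine obstacle; the argument is essentially forced once (\ref{ie_isoperimetric_width}) is available. The only points requiring (routine) care are that passing to the closed convex hull changes neither the diameter nor the minimum width — so that (\ref{ie_isoperimetric_width}) may be invoked with precisely the parameters $D_i,\omega_i$ above — together with the two calculus facts about $g$ (its monotonicity in $D$, or equivalently the geometric containment of spherical slices, and the concavity of $g(1,\cdot)$). The conceptual heart is the observation that the feasible region $d_1+d_2\le1$ for the two widths, combined with $g(1,\cdot)$ being concave and increasing, forces the extremal configuration to be the balanced one $d_1=d_2=\tfrac12$, i.e.\ two copies of the spherical slice $S_d$.
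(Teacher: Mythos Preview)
Your proposal is correct and follows essentially the same approach as the paper: apply the diameter--width inequality~(\ref{ie_isoperimetric_width}) to each component, reduce to optimising over widths with $d_1+d_2\le 1$, and locate the maximum at $d_1=d_2=\tfrac12$. Your treatment is somewhat more careful --- you dispose of the degenerate case $\diam(\mathcal{C}_1\cup\mathcal{C}_2)\le 1$ explicitly, justify the monotonicity in $D$, and use concavity plus Jensen where the paper simply invokes ``a straightforward calculation'' --- but the core argument is identical.
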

\begin{proof}
  With notation above we estimate making use of Inequality~(\ref{ie_isoperimetric_width}) the total volume of the closed convex hulls of the two connected
  components $\conv(\overline{\mathcal{C}}_1),\conv(\overline{\mathcal{C}}_2)$, i.e.
  $$
    \lambda_d(\conv(\mathcal{C}_1))+\lambda_d(\conv(\mathcal{C}_2))
  $$
  where both connected components are of diameter at most~$1$, $\mathcal{C}_1$ is of width  at most $d_1$, and $\mathcal{C}_2$
  is of width at most $d_2$. Thus we have
  $$
    \lambda_d(\conv(\mathcal{C}_1))\le \lambda_d(B_{d-1})\int_0^{\arcsin d_1}\cos ^d\theta\,\mbox{d}\theta
  $$
  and
  $$
    \lambda_d(\conv(\mathcal{C}_2))\le \lambda_d(B_{d-1})\int_0^{\arcsin d_2}\cos ^d\theta\,\mbox{d}\theta.
  $$
  Since both right hand sides are strictly monotone in $d_1$, $d_2$ respectively, we can assume w.l.o.g.\ that $d_1+d_2=1$, so 
  it suffices to maximize the following function of $x$
  $$
    \int_0^{\arcsin x}\cos ^d\theta\,\mbox{d}\theta+\int_0^{\arcsin (1-x)}\cos ^d\theta\,\mbox{d}\theta
  $$
  with domain $[0,1]$. A straightforward calculation shows that the function attains its unique maximum value at $x=\frac{1}{2}$.
\end{proof}

\begin{lemma}
  \label{lemma_two_component_construction}
  $$f_d(2)\ge 2\cdot \lambda_d(S_d)\quad\mbox{for all $d$} $$
\end{lemma}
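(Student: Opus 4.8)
The plan is to construct, for each $\varepsilon>0$, an explicit two-component open set of volume arbitrarily close to $2\lambda_d(S_d)$, so that letting $\varepsilon\to 0$ gives the lower bound. The natural candidate is a pair of translated copies of the spherical symmetric slice $S_d$ (the truncated unit ball with diameters $1$ and width $\tfrac12$), shrunk slightly to diameter $1-2\varepsilon$ and width $\tfrac12-\varepsilon$, placed far apart along the axis on which their width is measured and positioned so that the interval of distances realized between the two components contains no integer. Concretely: take $\mathcal{C}_1$ a slightly shrunk open $S_d$, and $\mathcal{C}_2$ its image under a translation by a vector of the form $(t,0,\dots,0)$ where the shrunk slices are oriented so that their ``short'' direction is the $x_1$-axis. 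Then the set of distances $\dist(a,b)$ for $a\in\mathcal{C}_1$, $b\in\mathcal{C}_2$ ranges over an interval of length roughly $\diam(\mathcal{C}_1\cup\mathcal{C}_2)-\dist(\mathcal{C}_1,\mathcal{C}_2)$, which by the shrinking is strictly less than $1$; choosing $t$ appropriately slides this short interval into a gap $(m,m+1)$ between consecutive integers. By Lemma~\ref{lemma_prestage_width} (since both components have diameter $<1$), avoiding integral distances is exactly the statement that this distance interval misses $\mathbb{N}$, which we can arrange.

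First I would set up the two shrunk slices precisely, fixing their orientation so that the minimum-width direction is the $x_1$-axis and computing $\dist(\mathcal{C}_1,\mathcal{C}_2)$ and $\diam(\mathcal{C}_1\cup\mathcal{C}_2)$ as functions of the translation length $t$; the key point is that $\diam(\mathcal{C}_1\cup\mathcal{C}_2)-\dist(\mathcal{C}_1,\mathcal{C}_2)$ stays bounded by a constant strictly below $1$ uniformly in $t$ (it is controlled by the diameter $1-2\varepsilon$ of a single shrunk component, not by $t$), which is exactly what makes it possible to fit the interval of realized distances between two consecutive integers. Second, I would verify that for a suitable choice of $t$ the open interval $(\dist(\mathcal{C}_1,\mathcal{C}_2),\diam(\mathcal{C}_1\cup\mathcal{C}_2))$ is disjoint from $\mathbb{N}$, and invoke Lemma~\ref{lemma_prestage_width} to conclude that $\mathcal{C}_1\cup\mathcal{C}_2$ avoids integral distances. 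Third, I would compute the volume: $\lambda_d(\mathcal{C}_1\cup\mathcal{C}_2)=2(1-2\varepsilon)^d\lambda_d(S_d)$ (scaling $S_d$ by the factor $1-2\varepsilon$), which tends to $2\lambda_d(S_d)$ as $\varepsilon\to 0$, giving $f_d(2)\ge 2\lambda_d(S_d)$.

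The subtle point I would need to check carefully is that the minimum width $\tfrac12-\varepsilon$ of the shrunk slice is genuinely attained in the $x_1$-direction, so that translating along $x_1$ is the ``cheapest'' way to separate the components while keeping the union's diameter small; equivalently, I must confirm that when $\mathcal{C}_2$ is $\mathcal{C}_1$ translated far along its own short axis, $\diam(\mathcal{C}_1\cup\mathcal{C}_2)=\dist(\mathcal{C}_1,\mathcal{C}_2)+\text{(something)}$ with that ``something'' strictly less than $1$. For the spherical symmetric slice this is geometrically transparent: the extreme pair of points of the union lies along a near-axial direction, and the excess over the gap distance is at most the diameter of a single slice, namely $1-2\varepsilon<1$. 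Once this is pinned down, choosing $m$ large enough and $t$ so that $\dist(\mathcal{C}_1,\mathcal{C}_2)$ lands just above $m$ finishes the construction. I expect the only real obstacle to be making the claim about $\diam(\mathcal{C}_1\cup\mathcal{C}_2)-\dist(\mathcal{C}_1,\mathcal{C}_2)$ rigorous — everything else is routine scaling and the application of Lemma~\ref{lemma_prestage_width}.
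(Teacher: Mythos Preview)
Your approach is correct and essentially the same as the paper's. The paper also takes two slightly shrunk copies of $S_d$ oriented with their short direction along the $x_1$-axis and translated far apart along that axis; the only difference is that the paper fixes explicit parameters (ball diameter $1-\tfrac{2}{k}$, width $\tfrac12-\tfrac{2}{k}$, translation $dk+\tfrac12-\tfrac{2}{k}$) and verifies directly the inequality $dk<\dist(a,b)<\sqrt{(d-1)(1-\tfrac{2}{k})^2+(dk+1-\tfrac{4}{k})^2}\le dk+1$, rather than phrasing the conclusion via Lemma~\ref{lemma_prestage_width}. Your identification of the only nontrivial step, bounding $\diam(\mathcal{C}_1\cup\mathcal{C}_2)-\dist(\mathcal{C}_1,\mathcal{C}_2)$ by something strictly below $1$ for large $t$, is exactly what the paper's explicit inequality accomplishes.
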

\begin{proof}
  For an arbitrary integer $k\ge 5$ we place the center of a $d$-dimensional open ball with diameter $1-\frac{2}{k}$ at the origin
  and cut off the spherical caps with the hyperplanes determined by the values $\pm\left(\frac{1}{4}-\frac{1}{k}\right)$ of the first
  coordinate.  We denote by $\mathcal{S}_1$ the \highlight{resulting} truncated ball. We consider the copy $S_2$ of $S_1$ by shifting
  the center of $S_1$ $dk+\frac{1}{2}-\frac{2}{k}$ units along the first coordinate axis (Figure~\ref{fig_two_component_construction}
  below illustrates the 2-dimensional case). Since both $\mathcal{S}_1$ and $\mathcal{S}_2$ have diameter less than $1$ for all
  $k\in\mathbb{N}$, they contain no pair of points with integral distance. For arbitrary points $a\in\mathcal{S}_1$ and $b\in\mathcal{S}_2$, we have
  $$
    dk<\dist(a,b)<\sqrt{(d-1)\left(1-\frac{2}{k}\right)^2+\left(dk+1-\frac{4}{k}\right)^2}\le dk+1,
  $$
  so  $\mathcal{S}_1\cup\mathcal{S}_2$ has no pairs of points with integral distance.
  
  It is easily seen that the volume of $\mathcal{S}_1\cup\mathcal{S}_2$ approaches $2\cdot\lambda_d(S_d)$, as $k$ increases.
\end{proof}

\begin{figure}[htp]
  \begin{center}
    \includegraphics{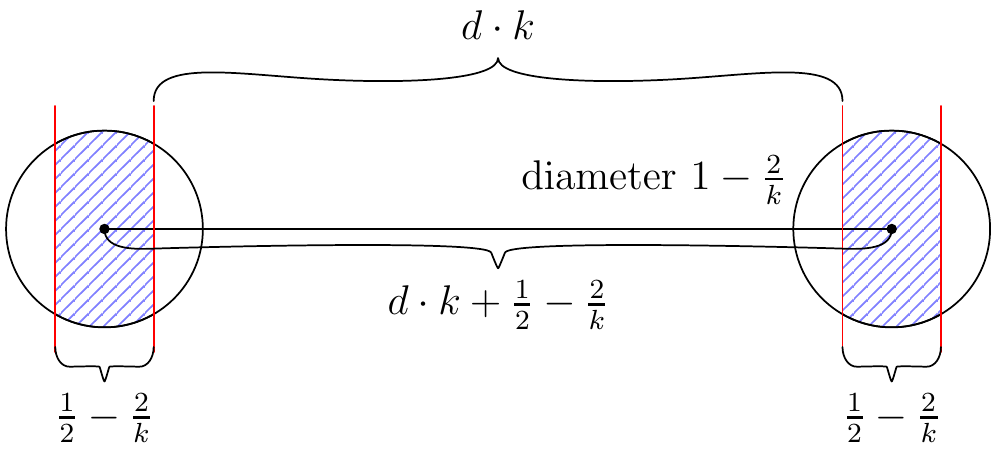}
    \caption{Truncated circles -- a construction of two components without integral distances.}
    \label{fig_two_component_construction}
  \end{center}
\end{figure}

Combining Lemmas~\ref{lemma_upper_bound_general_integral_n_2} and 4 
yields the following
\begin{corollary}
    $f_d(2)=2\lambda_d(S_d)$\,\, for all $d\ge 2$.
\end{corollary}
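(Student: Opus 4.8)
The plan is simply to combine the two bounds just proved: Lemma~\ref{lemma_upper_bound_general_integral_n_2} gives $f_d(2)\le 2\lambda_d(S_d)$ and Lemma~\ref{lemma_two_component_construction} gives $f_d(2)\ge 2\lambda_d(S_d)$, so the two inequalities collapse to the equality $f_d(2)=2\lambda_d(S_d)$ for every $d\ge 2$. There is nothing further to do at the level of the corollary itself.

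For completeness I would recall briefly why the two halves meet. On the upper-bound side one first extracts, via Lemmas~\ref{lemma_repultion_property} and \ref{lemma_prestage_width}, a sandwiching of the two components between two pairs of parallel hyperplanes, yielding widths $d_1,d_2$ with $d_1+d_2\le 1$ by Theorem~\ref{thm_line_intersection}(i); then one applies the width--diameter volume bound~(\ref{ie_isoperimetric_width}) to $\conv(\overline{\mathcal{C}}_1)$ and $\conv(\overline{\mathcal{C}}_2)$ and maximizes $\int_0^{\arcsin x}\cos^d\theta\,\mbox{d}\theta+\int_0^{\arcsin (1-x)}\cos^d\theta\,\mbox{d}\theta$ over $x\in[0,1]$. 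The maximum sits at $x=\tfrac12$, where $\arcsin\tfrac12=\tfrac{\pi}{6}$ turns each summand into the defining integral~(\ref{eq_S_d}) for $\lambda_d(S_d)$, giving the value $2\lambda_d(S_d)$. On the lower-bound side one exhibits two congruent spherical symmetric slices of diameter $1-\tfrac2k$ and width $\tfrac12-\tfrac2k$ with centres separated by about $dk$ along a coordinate axis, and the coordinate computation in the proof of Lemma~\ref{lemma_two_component_construction} traps every inter-component distance strictly inside the window $(dk,\,dk+1)$, so no integral distance occurs, while the total volume tends to $2\lambda_d(S_d)$ as $k\to\infty$.

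The hard part, such as it is, therefore lies entirely inside the two preceding lemmas — the structural hyperplane argument together with the single-variable optimization for the upper bound, and the verification that the slice construction keeps all inter-component distances between consecutive integers for the lower bound. The corollary merely records that these estimates are sharp, with the extremal configuration being a limit of two spherical symmetric slices $S_d$ of diameter $1$ and width $\tfrac12$, placed so that every line meets them in a total length at most $1$ and every inter-component distance falls strictly between two integers.
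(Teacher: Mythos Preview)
Your proposal is correct and matches the paper's own treatment exactly: the corollary is stated immediately after the two lemmas with the one-line justification that combining Lemma~\ref{lemma_upper_bound_general_integral_n_2} and Lemma~\ref{lemma_two_component_construction} yields the result. Your additional recap of the content of those lemmas is accurate but goes beyond what the paper records at the level of the corollary itself.
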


One might conjecture that \highlight{the upper bound  from Lemma~\ref{lemma_upper_bound_general_integral_n_2}} is also
valid for $l_d(2)$, see Conjecture~\ref{conj_l_d_n}. \highlight{Technically, we have used Lemmas~\ref{lemma_prestage_width} and 4
but it is conceivable that there is an alternative approach not relying on these assertions.}

 Note that related problems can be quite complicated, e.g.\ it is hard
to determine the equilateral $n$-gon with diameter $1$ and maximum area \cite{1179.90306,1182.65088}. 

\begin{conjecture}
  \label{conj_l_d_n}
   $l_d(n)=n\cdot \lambda_d(S_d)$ for all $n\ge 2$ and $d\ge 2$.
\end{conjecture}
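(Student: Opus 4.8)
The plan is to reduce Conjecture~\ref{conj_l_d_n} to its case $n=2$ and then to follow the scheme of Lemma~\ref{lemma_upper_bound_general_integral_n_2}, the essential difference being that here condition~(b) of the definition of $l_d$ must play the role that Lemmas~\ref{lemma_prestage_width} and \ref{lemma_repultion_property} played in the integral‑distance setting. The two easy halves go as follows. For the upper bound it suffices, by Lemma~\ref{lemma_averaging} with its $n$ set to $2$ and its $k$ set to $n$, to establish $l_d(2)\le 2\lambda_d(S_d)$; this at once yields $l_d(n)\le\tfrac n2\cdot 2\lambda_d(S_d)=n\lambda_d(S_d)$ for every $n\ge 2$. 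For the lower bound I would take $n$ pairwise disjoint copies of $S_d$, each scaled by $1-\varepsilon$, all sharing one axis direction, with centers placed along the curve $i\mapsto(ik,i^{2},0,\dots,0)$ used after Theorem~\ref{thm_l_circ}: for $k$ large no line meets three of these slices, and any line meeting two of them is so nearly parallel to the common axis that its two chords have total length at most $1$, so condition~(b) of Theorem~\ref{thm_line_intersection} holds and, as $\varepsilon\to0$, the volume tends to $n\lambda_d(S_d)$ (alternatively one may simply invoke $l_d(n)\ge f_d(n)=n\lambda_d(S_d)$). Thus everything comes down to the single inequality $l_d(2)\le 2\lambda_d(S_d)$.

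For that inequality the target is to reproduce the structural step of Subsection~\ref{subsec_two_components}: given an admissible pair $\mathcal{C}_1,\mathcal{C}_2$, produce a direction $u$ and parallel hyperplanes $\mathcal{H}_1,\dots,\mathcal{H}_4$ normal to $u$, occurring in this order, such that $\mathcal{C}_1$ lies between $\mathcal{H}_1$ and $\mathcal{H}_2$, $\mathcal{C}_2$ lies between $\mathcal{H}_3$ and $\mathcal{H}_4$, and $d_1+d_2\le 1$ where $d_i:=\dist(\mathcal{H}_{2i-1},\mathcal{H}_{2i})$. Granting this, each $\mathcal{C}_i$ is a set of diameter at most $1$ and width at most $d_i$, so Inequality~(\ref{ie_isoperimetric_width}) gives, exactly as in Lemma~\ref{lemma_upper_bound_general_integral_n_2}, that $\lambda_d(\mathcal{C}_i)\le\lambda_{d-1}(B_{d-1})\,h(d_i)$ where $h(\omega):=\int_0^{\arcsin\omega}\cos^{d}\theta\,\mbox{d}\theta$. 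One computes $h'(\omega)=(1-\omega^{2})^{(d-1)/2}$, which is non‑increasing on $[0,1]$ for $d\ge 2$, so $h$ is concave and increasing and therefore
$$
  h(d_1)+h(d_2)\le 2h\!\left(\tfrac{d_1+d_2}{2}\right)\le 2h\!\left(\tfrac12\right)=\frac{2\lambda_d(S_d)}{\lambda_{d-1}(B_{d-1})},
$$
the last equality by~(\ref{eq_S_d}); hence $\lambda_d(\mathcal{C}_1)+\lambda_d(\mathcal{C}_2)\le 2\lambda_d(S_d)$, as wanted.

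The main obstacle — and the reason the statement is stated only as a conjecture — is obtaining this slab structure from condition~(b) alone. When the closures $\overline{\mathcal{C}}_1$ and $\overline{\mathcal{C}}_2$ can be separated by a hyperplane one may take $u$ normal to it, so that the $u$‑shadows of the two components are disjoint and the four hyperplanes exist; the remaining task is to bound $d_1+d_2$ by applying condition~(b) to suitable lines through the ``thick'' parts of $\mathcal{C}_1$ and $\mathcal{C}_2$, which looks feasible but requires care, since the longest chords of $\mathcal{C}_1$ in direction $u$ need not lie over the shadow of $\mathcal{C}_2$. The genuinely hard case is when $\overline{\mathcal{C}}_1$ and $\overline{\mathcal{C}}_2$ are \emph{not} separable — the model example being a thin spherical shell $\mathcal{C}_1$ enclosing a ball $\mathcal{C}_2$ — since then no direction yields disjoint shadows and the slab approach collapses entirely. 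In that regime one has to argue directly: fixing $p\in\conv(\overline{\mathcal{C}}_1)\cap\conv(\overline{\mathcal{C}}_2)$, the bound $\diam(\mathcal{C}_i)\le 1$ forces $\mathcal{C}_1\cup\mathcal{C}_2\subseteq\overline{B}(p,1)$, and one would integrate in polar coordinates about $p$, where condition~(b) caps the length of $\mathcal{C}_1\cup\mathcal{C}_2$ along every line through $p$ by $1$, while $\diam(\mathcal{C}_i)\le 1$ additionally confines $\mathcal{C}_i$ to the lens $\overline{B}(p,1)\cap\overline{B}(c_i,1)$ for any $c_i\in\overline{\mathcal{C}}_i$; combining these two constraints should push the total volume below $2\lambda_d(S_d)$, but turning this heuristic into a clean estimate uniform in $d$ is precisely the missing ingredient, and only the partial bounds of Subsection~\ref{subsec_bounds_line_restricted} can be proved at present.
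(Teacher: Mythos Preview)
The statement you are addressing is a \emph{conjecture} in the paper, not a theorem: the paper gives no proof of it and explicitly leaves it open, remarking after Corollary~2 that the upper bound of Lemma~\ref{lemma_upper_bound_general_integral_n_2} ``might'' also hold for $l_d(2)$ but that the available argument relies on Lemmas~\ref{lemma_prestage_width} and \ref{lemma_repultion_property}, which have no analogue under condition~(b) alone. There is therefore no paper proof to compare against.

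Your write-up is not a proof either, and you say so yourself; it is a correct diagnosis of where the difficulty lies. The easy pieces you isolate are sound: the lower bound $l_d(n)\ge f_d(n)=n\lambda_d(S_d)$ is exactly what the paper states in Section~\ref{sec_conclusion}, and the reduction of the upper bound to the case $n=2$ via Lemma~\ref{lemma_averaging} is valid. Your identification of the gap --- that condition~(b) does not by itself produce the four parallel hyperplanes with $d_1+d_2\le 1$, and that the non-separable case (one component nested in the convex hull of the other) defeats the slab approach entirely --- matches precisely the obstacle the paper alludes to. Note that even your ``separable'' case is already problematic: the widths $d_1,d_2$ are realised by chords on \emph{different} lines in direction~$u$, so no single application of condition~(b) bounds their sum, as you acknowledge. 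In short, your proposal is an accurate account of why Conjecture~\ref{conj_l_d_n} remains open, not a resolution of it.
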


\subsection{Bounds for $\mathbf{l_d(n)}$}
\label{subsec_bounds_line_restricted}

Using exhaustion over lines, we can find two first upper bounds for $l_d(n)$.

\begin{lemma}
  \label{upper_bound_two_components_d}
  $l_d(2)\le \lambda_{d-1}\!\left(B_{d-1}\right)\cdot \left(\sqrt{\frac{2d}{d+1}}\right)^{d-1}$\,\, for all $d\ge 2$.
\end{lemma}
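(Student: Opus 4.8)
The plan is to bound the volume of a two-component configuration $\mathcal{P}=\mathcal{C}_1\cup\mathcal{C}_2$ satisfying conditions (a) and (b) by slicing $\mathbb{R}^d$ with a family of parallel lines and invoking Theorem~\ref{thm_line_intersection}(i) on each line. First I would fix a direction, say the $x_1$-axis, and for each point $t$ in the orthogonal hyperplane $\mathbb{R}^{d-1}$ consider the line $\mathcal{L}_t$ parallel to $e_1$ through $t$. Fubini's theorem gives $\lambda_d(\mathcal{P})=\int_{\mathbb{R}^{d-1}}\lambda_1(\mathcal{P}\cap\mathcal{L}_t)\,\mathrm{d}t$, and condition (b) says $\lambda_1(\mathcal{P}\cap\mathcal{L}_t)\le 1$ pointwise. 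Hence $\lambda_d(\mathcal{P})\le\lambda_{d-1}(\pi(\mathcal{P}))$, where $\pi$ is the orthogonal projection onto $\mathbb{R}^{d-1}$; so it suffices to bound the $(d-1)$-volume of the shadow of $\mathcal{P}$ in any fixed direction.

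The key geometric observation is that each connected component has diameter at most $1$ (condition (a)), so its projection $\pi(\mathcal{C}_i)$ lies in a $(d-1)$-ball of diameter at most $1$; but this alone only gives $\lambda_{d-1}(\pi(\mathcal{P}))\le 2\lambda_{d-1}(B_{d-1})$, which is too weak. The improvement comes from choosing the slicing direction cleverly: I would take the direction $e_1$ to be the one that maximizes $\lambda_1(\mathcal{P}\cap\mathcal{L})$ over all lines $\mathcal{L}$ — equivalently, the direction realizing the largest one-dimensional section. Along that extremal direction, the set $\mathcal{P}$ is "thin," and one can argue (using that both components together have diameter governed by condition (a) together with the line-section constraint) that the shadow in the orthogonal $\mathbb{R}^{d-1}$ cannot be the full pair of unit balls. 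Concretely, the trade-off between how much width $\mathcal{P}$ has in the slicing direction and how wide its shadow can be is captured by the fact that a convex body of diameter $1$ whose width in direction $e_1$ is $w$ projects to a $(d-1)$-body of diameter at most $\sqrt{1-w^2}$; optimizing the resulting bound on $\int\lambda_1\,\mathrm{d}t$ over the split of the available "height $1$" between the two components leads to the factor $\sqrt{2d/(d+1)}$ after an elementary calculus step (the optimum balances the two components equally and the exponent $d-1$ arises from scaling a $(d-1)$-ball).

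The main obstacle I anticipate is making rigorous the claim that one may replace $\mathcal{C}_1,\mathcal{C}_2$ by their convex hulls without increasing the line-section lengths — convexification can only help for volume but could in principle create new long line intersections — so the argument must instead work directly with the projection inequality $\lambda_d(\mathcal{P})\le\lambda_{d-1}(\pi(\mathcal{P}))$ and then bound $\lambda_{d-1}(\pi(\mathcal{P}))$ using only that $\pi(\mathcal{P})$ is contained in the union of two translates of $(d-1)$-balls whose radii are controlled by the widths of $\mathcal{C}_1,\mathcal{C}_2$ in the $e_1$-direction. I would then use $d_1+d_2\le 1$ (from Theorem~\ref{thm_line_intersection}(i), exactly as in the discussion preceding Lemma~\ref{lemma_upper_bound_general_integral_n_2}) together with $\mathrm{diam}(\mathcal{C}_i)\le 1$ to get $\mathrm{rad}(\pi(\mathcal{C}_i))\le\tfrac12\sqrt{1-d_i^2}$, so that $\lambda_{d-1}(\pi(\mathcal{P}))\le\lambda_{d-1}(B_{d-1})\bigl((1-d_1^2)^{(d-1)/2}+(1-d_2^2)^{(d-1)/2}\bigr)$; maximizing this over $d_1+d_2=1$, $d_1,d_2\ge 0$ gives the symmetric optimum $d_1=d_2=\tfrac12$ and the value $2\lambda_{d-1}(B_{d-1})(3/4)^{(d-1)/2}$. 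Since $2(3/4)^{(d-1)/2}=(\sqrt{2d/(d+1)})^{d-1}$ fails in general, I expect the correct argument integrates the line-section length as a function of position rather than bounding it by the constant $1$ everywhere, recovering precisely $\lambda_{d-1}(B_{d-1})(\sqrt{2d/(d+1)})^{d-1}$ from the integral $\int_0^{\arcsin(1/2)}\cos^d\theta\,\mathrm{d}\theta$-type expression after the optimization; nailing down that integration step is the crux.
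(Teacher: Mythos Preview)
Your proposal does not reach the stated bound, and the difficulty is not merely the ``integration step'' you flag at the end: several of the intermediate claims are false, and the key idea of the paper's proof is absent.

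First, the geometric claim that a set of diameter~$1$ whose extent in the $e_1$-direction equals $w$ projects onto $e_1^\perp$ with diameter at most $\sqrt{1-w^2}$ is wrong. A Reuleaux triangle (or any body of constant width~$1$) has $w=1$ in every direction, yet every projection has diameter~$1$, not~$0$. More modestly, an equilateral triangle of side~$1$ with one side along $e_1$ has $w=1$ but projects onto $e_1^\perp$ with length $\sqrt{3}/2$. So the bound $\mathrm{rad}\bigl(\pi(\mathcal{C}_i)\bigr)\le\tfrac12\sqrt{1-d_i^2}$ is unjustified.

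Second, the inequality $d_1+d_2\le 1$ that you import from the discussion before Lemma~\ref{lemma_upper_bound_general_integral_n_2} is proved there for $f_d$, not for $l_d$: it relies on Lemmas~\ref{lemma_prestage_width} and~\ref{lemma_repultion_property}, which use the integral-distance-avoiding hypothesis. For $l_d(2)$ you only have conditions~(a) and~(b), and (b) bounds $\lambda_1\bigl((\mathcal{C}_1\cup\mathcal{C}_2)\cap\mathcal{L}\bigr)$ line by line; it does \emph{not} bound the sum of the two extents in a fixed direction. Two thin components far apart can each have extent close to~$1$ in the $e_1$-direction while every single line hits at most one of them.

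Finally, even granting your two claims, the optimisation gives $2\lambda_{d-1}(B_{d-1})(3/4)^{(d-1)/2}$, and you yourself note that this is not $\lambda_{d-1}(B_{d-1})\bigl(2d/(d+1)\bigr)^{(d-1)/2}$; for $d=2$ the numbers are $\sqrt{3}$ versus $2/\sqrt{3}$.

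The paper's argument avoids all of this by invoking \emph{Jung's theorem}: a set of diameter~$1$ is contained in a ball of diameter $\sqrt{2d/(d+1)}$. Enclose each $\mathcal{C}_i$ in such a Jung ball $\mathcal{B}_i$, take the line through the two centres as the axis, and observe that both components lie in a cylinder over a $(d-1)$-ball of diameter $\sqrt{2d/(d+1)}$. Slicing this cylinder by lines parallel to the axis and applying condition~(b) on each line (via Fubini, exactly as you set up) gives volume at most $\lambda_{d-1}(B_{d-1})\bigl(\sqrt{2d/(d+1)}\bigr)^{d-1}\cdot 1$. No optimisation, no width/projection trade-off, no separating hyperplanes are needed; Jung's bound is what produces the constant $\sqrt{2d/(d+1)}$ directly.
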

\begin{proof}
  By Lemma~\ref{lemma_diameter} both connected components, denoted by $\mathcal{C}_1$ and $\mathcal{C}_2$, are of diameter at
  most \highlighttt{$1$}, so Jung's theorem \cite{MR1389515,32.0296.05} yields the enclosing balls $\mathcal{B}_1$, $\mathcal{B}_2$ for these connected
  components of diameter $\sqrt{\frac{2d}{d+1}}$. 
  So there is an enclosing cylinder, having a $(d-1)$-dimensional ball of diameter  $\sqrt{\frac{2d}{d+1}}$ as its
  base, containing the closed convex hull $\conv(\overline{\mathcal{B}_1\cup\mathcal{B}_2})$. The diagram is depicted in
  Figure~\ref{fig_enclosing_tube_and_balls}, note that in general the two enclosing balls $\mathcal{B}_1$ and $\mathcal{B}_2$ are not necessarily disjoint. By exhausting the cylinder with the lines parallel to the line through the
  centers of $\mathcal{B}_1$ and $\mathcal{B}_2$ and applying
  Theorem~\ref{thm_line_intersection}(i)  we conclude, using a suitable Riemann integral or Fubini's theorem, that the volume of $\mathcal{C}_1\cup\mathcal{C}_2$ is at most
  $\lambda_{d-1}\!\left(B_{d-1}\right)\cdot \left(\sqrt{\frac{2d}{d+1}}\right)^{d-1}$.
\end{proof}

\begin{figure}[htp]
  \begin{center}
    \includegraphics[width=12cm]{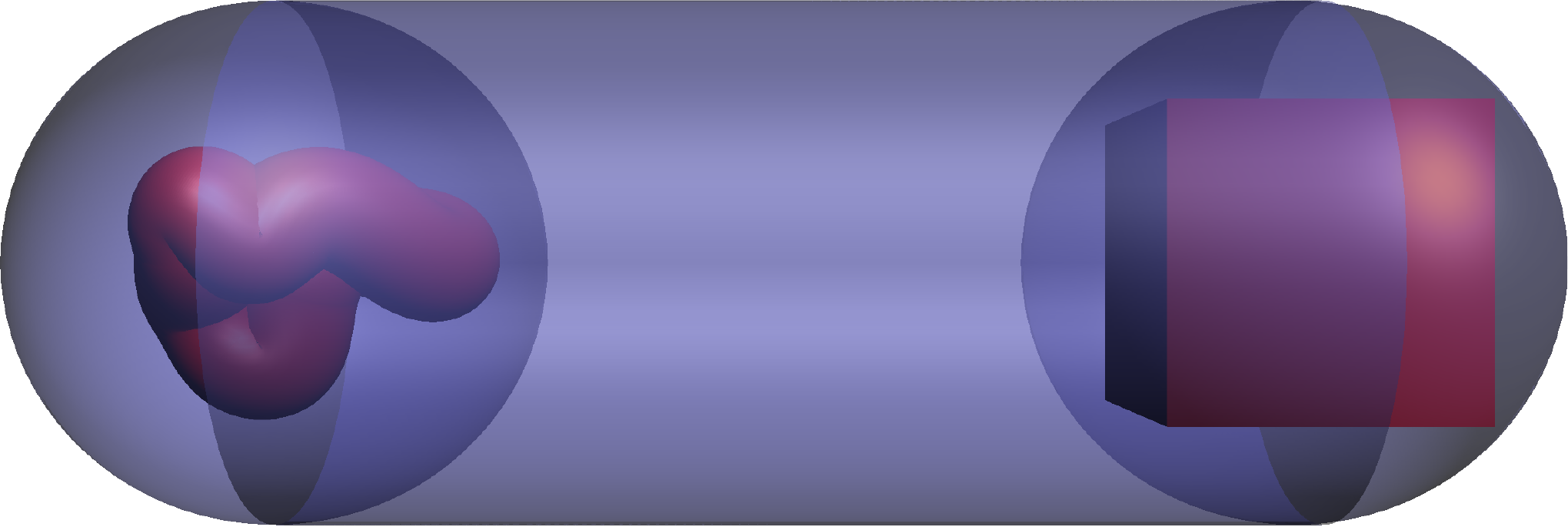}
    \caption{Two $3$-dimensional components with the enclosing balls and enclosing cylinder.}
    \label{fig_enclosing_tube_and_balls}
  \end{center}
\end{figure}

The estimates for the first few upper bounds of $l_d(2)$ in Lemma~\ref{upper_bound_two_components_d} are:
$l_2(2)\le \frac{2}{\sqrt{3}}\approx 1.1547$, $l_3(2)\le \frac{3\pi}{8}\approx 1.1781$, $l_4(2)\le \frac{8\sqrt{2}\pi}{15\sqrt{5}}\approx 1.0597$,
$l_5(2)\le \frac{25\pi^2}{288}\approx 0.8567$ and $l_d(2)$ tends to 0 as the dimension~$d$ increases.

Note that we used a bit wastefully the Jung enclosing balls. The universal cover problem, first stated in a personal communication of Lebesgue in 1914, asks
for the minimum area $A$ of a convex set $U$ containing a congruent copy of any planar set of diameter $1$, see \cite{1088.52002}. For the currently
best known bounds $0.832\le A\le 0.844$ and generalizations to higher dimensions we refer the interested reader to \cite[Section 11.4]{1086.52001}. 
\highlight{In this paper we} do not \highlight{pursue the aim of finding more precise bounds  for the maximum volumes using this idea}. \highlight{The restriction of the shape of connected components to $d$-dimensional open balls 
has already been treated in Section~\ref{sec_collection_of_balls}.}

\begin{figure}[htp]
  \begin{center}
    \includegraphics[width=4.5cm]{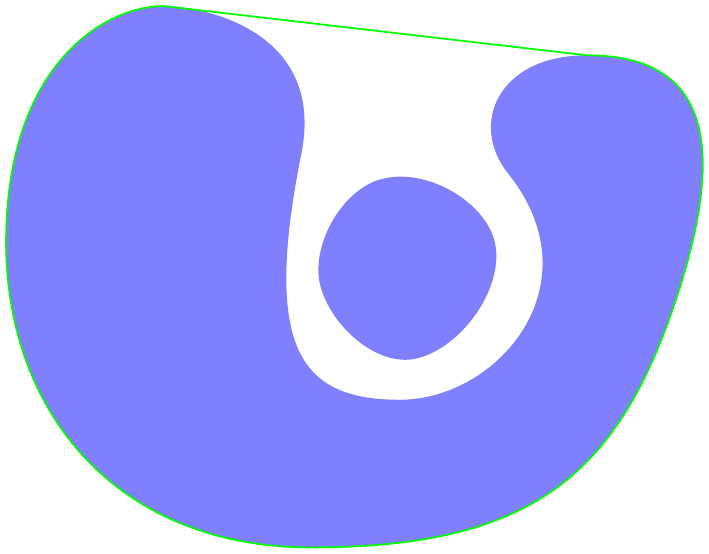}
    \caption{A connected component contained in the convex hull of another one.}
    \label{fig_contained_components}
  \end{center}
\end{figure}

In dimension $d=2$ the upper bound from Lemma~\ref{upper_bound_two_components_d} can easily be improved.
\begin{lemma}
  \label{lemma_upper_bound_2_2}
  $$l_2(2)\le 1.$$
\end{lemma}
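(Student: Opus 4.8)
The plan is to reduce the area estimate, by Fubini, to a one–dimensional statement about projections, and then to settle that statement by slicing in the direction perpendicular to the diameter of $\mathcal{C}_1\cup\mathcal{C}_2$. For a unit vector $w$ write $\pi_w$ for orthogonal projection onto the line $\mathbb{R}w$ and slice $\mathcal{P}$ by the lines $\ell_t=\{z:\langle z,w\rangle=t\}$. By Fubini $\lambda_2(\mathcal{P})=\int_{\mathbb{R}}\lambda_1(\mathcal{P}\cap\ell_t)\,\mathrm{d}t$; by condition~(b) each slice satisfies $\lambda_1(\mathcal{P}\cap\ell_t)\le 1$, and $\ell_t$ meets $\mathcal{P}$ exactly for $t\in\pi_w(\mathcal{P})$, so
$$
  \lambda_2(\mathcal{P})\ \le\ \lambda_1\bigl(\pi_w(\mathcal{P})\bigr)\ =\ \lambda_1\bigl(\pi_w(\mathcal{C}_1)\cup\pi_w(\mathcal{C}_2)\bigr)\qquad\text{for every }w .
$$
As projection does not increase diameters, each $\pi_w(\mathcal{C}_i)$ is an open interval of length at most $\diam(\mathcal{C}_i)\le 1$ (Lemma~\ref{lemma_diameter}); hence it suffices to find one $w$ for which the union of these two intervals has length $\le 1$.

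Put $D:=\diam\bigl(\overline{\mathcal{C}_1\cup\mathcal{C}_2}\bigr)$. If $D\le 1$ then $\mathcal{P}$ lies in a convex set of diameter $\le 1$, so $\lambda_1(\pi_w(\mathcal{P}))\le D\le 1$ for all $w$ (in fact the isodiametric inequality gives $\lambda_2(\mathcal{P})\le\lambda_2(B_2)=\pi/4$); in particular this disposes of the configuration of Figure~\ref{fig_contained_components}, where $\overline{\mathcal{C}_2}\subseteq\conv(\overline{\mathcal{C}_1})$ forces $D\le 1$. So assume $D>1$. Then the diameter is attained by points $x\in\overline{\mathcal{C}_1}$, $y\in\overline{\mathcal{C}_2}$ (necessarily in distinct components, since $\diam(\mathcal{C}_i)\le 1<D$), and I would take $w\perp(x-y)$, with coordinates chosen so that $x-y$ is vertical and $x,y$ have first coordinate $0$. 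Then $\pi_w(x)=\pi_w(y)=0$, so each open interval $\pi_w(\mathcal{C}_i)=(a_i,b_i)$ has $a_i\le 0\le b_i$, and therefore, up to a null set, $\pi_w(\mathcal{P})$ is the interval with endpoints $\min(a_1,a_2)$ and $\max(b_1,b_2)$, so that $\lambda_1(\pi_w(\mathcal{P}))=\max(b_1,b_2)+\max(-a_1,-a_2)$. If both maxima are attained at the same index $i$ this equals the width $b_i-a_i\le 1$ of $\mathcal{C}_i$ in direction $w$, and we are done; so it remains to treat the ``crossed'' case, where (after relabelling) $\lambda_1(\pi_w(\mathcal{P}))=P+Q$ with $P=b_1$ attained at some $p\in\mathcal{C}_1$ and $Q=-a_2$ attained at some $q\in\mathcal{C}_2$.

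In the crossed case the diameter bounds $|x-p|\le 1$ and $|y-q|\le 1$ control how far down $p$ and up $q$ can lie, and combining them with $|p-q|\le D$ gives an inequality of the shape $(P+Q)^2\le R(2D-R)$, where $R=\sqrt{1-P^{2}}+\sqrt{1-Q^{2}}$. By themselves these estimates only recover $\lambda_1(\pi_w(\mathcal{P}))\le 2\sqrt{1-1/(4D^{2})}$, which is essentially the bound of Lemma~\ref{upper_bound_two_components_d}; to push it down to $1$ one has to use condition~(b). Assuming $P+Q>1$, I would exhibit a line nearly parallel to $xy$ cutting $\mathcal{C}_1$ and $\mathcal{C}_2$ in chords of total length greater than $1$, which contradicts condition~(b) and forces $P+Q\le 1$. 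The hard part will be exactly this last step: the obstacle is that the components need not be convex, so large width of $\mathcal{C}_i$ in some direction does not by itself produce a long chord of $\mathcal{C}_i$; one has to invoke path–connectedness of the open sets (as in the proofs of Lemma~\ref{lemma_diameter} and Lemma~\ref{lemma_repultion_property}) to first produce genuine long segments inside $\mathcal{C}_1$ and $\mathcal{C}_2$ before appealing to (b). It is conceivable that directly minimising $\lambda_1(\pi_w(\mathcal{P}))$ over all $w$ and analysing the minimiser gives a cleaner argument.
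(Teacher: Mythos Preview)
Your Fubini reduction $\lambda_2(\mathcal{P})\le\lambda_1(\pi_w(\mathcal{P}))$ is exactly the right move, and it is also what the paper does. The gap is in your choice of direction $w$. Taking $w$ perpendicular to the diameter of $\mathcal{C}_1\cup\mathcal{C}_2$ forces you into the ``crossed'' case, which you do not resolve: you only sketch a plan (find a line nearly parallel to $xy$ with long chords in both components) and then correctly identify why that plan is problematic for non-convex components. As written, the proof is incomplete precisely at the point you flag.

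The paper avoids the crossed case entirely by choosing $w$ differently. If neither component lies in the closed convex hull of the other, then $\conv(\overline{\mathcal{C}_1})$ and $\conv(\overline{\mathcal{C}_2})$ admit a common support line $\mathcal{L}$ with both bodies on the same side of $\mathcal{L}$. Take $w$ perpendicular to $\mathcal{L}$. Since each $\overline{\mathcal{C}_i}$ touches $\mathcal{L}$ and has diameter at most $1$, each projection $\pi_w(\mathcal{C}_i)$ is an interval contained in $[0,1]$ (after translating so that $\mathcal{L}$ projects to $0$). Hence $\pi_w(\mathcal{C}_1)\cup\pi_w(\mathcal{C}_2)\subseteq[0,1]$ and you are done---there is no crossed case because both intervals start at the same endpoint. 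The remaining configuration, where one component sits inside the convex hull of the other, is the easy case you already handled via $D\le 1$. So the missing idea is simply: use a common tangent line rather than the diameter to pick the slicing direction.
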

\begin{proof}
  Let $\mathcal{P}$ be a planar open point set with two connected components $\mathcal{C}_1$ and $\mathcal{C}_2$ of
  diameter at most $1$ each. If one of them is contained in the closed convex hull of the other, see Figure~\ref{fig_contained_components}
  for an example, then we have $\lambda_2(\mathcal{P})\le \lambda_2(B_2)=\frac{\pi}{4}<1$. Otherwise, we select any support
  line $\mathcal{L}$ through the boundary points of $\mathcal{C}_1$ and $\mathcal{C}_2$ so that both regions are in the same
  half-plane determined by $\mathcal{L}$. We then consider the strip parallel to this line with smallest possible width~$w$
  containing both regions, see Figure~\ref{fig_two_components_between_parallel_lines}. Since both $\mathcal{C}_1$ and 
  $\mathcal{C}_2$ have diameter at most $1$, we have $w\le 1$. By exhausting the strip with the lines parallel to $\mathcal{L}$
  and applying Theorem~\ref{thm_line_intersection}(i) we conclude, using Riemann integral or Steiner symmetrization with respect to a line orthogonal to $\mathcal{L}$, that the area of $\mathcal{C}_1\cup\mathcal{C}_2$ is at most $1$.
\end{proof}

\begin{figure}[htp]
  \begin{center}
    \includegraphics{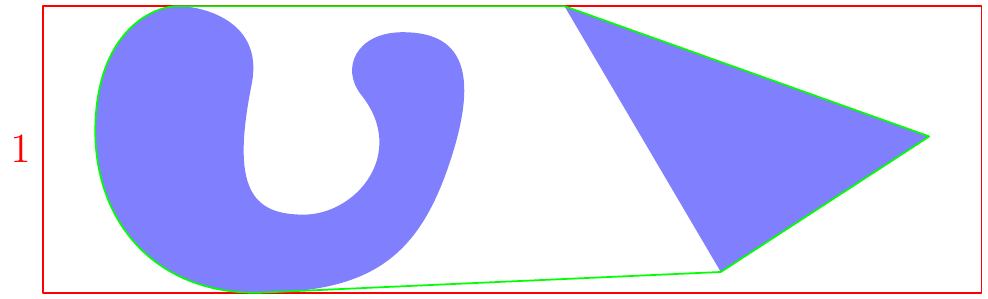}
    \caption{Two components between two parallel lines.}
    \label{fig_two_components_between_parallel_lines}
  \end{center}
\end{figure}

\subsection{\highlightt{The exact value of $\mathbf{f_d(n)}$}}
\label{subsec_bounds_anti_integral}

\highlight{Combining Lemmas~\ref{lemma_upper_bound_general_integral_n_2} and 5 
 yields
the upper bound $f_d(n)\le n\lambda_d\!\left(S_d\right)$.} \highlightt{In the remaining part of this subsection we will
describe configurations whose volumes asymptotically attain this upper bound.}

\begin{figure}[htp]
  \begin{center}
    \includegraphics[width=8cm]{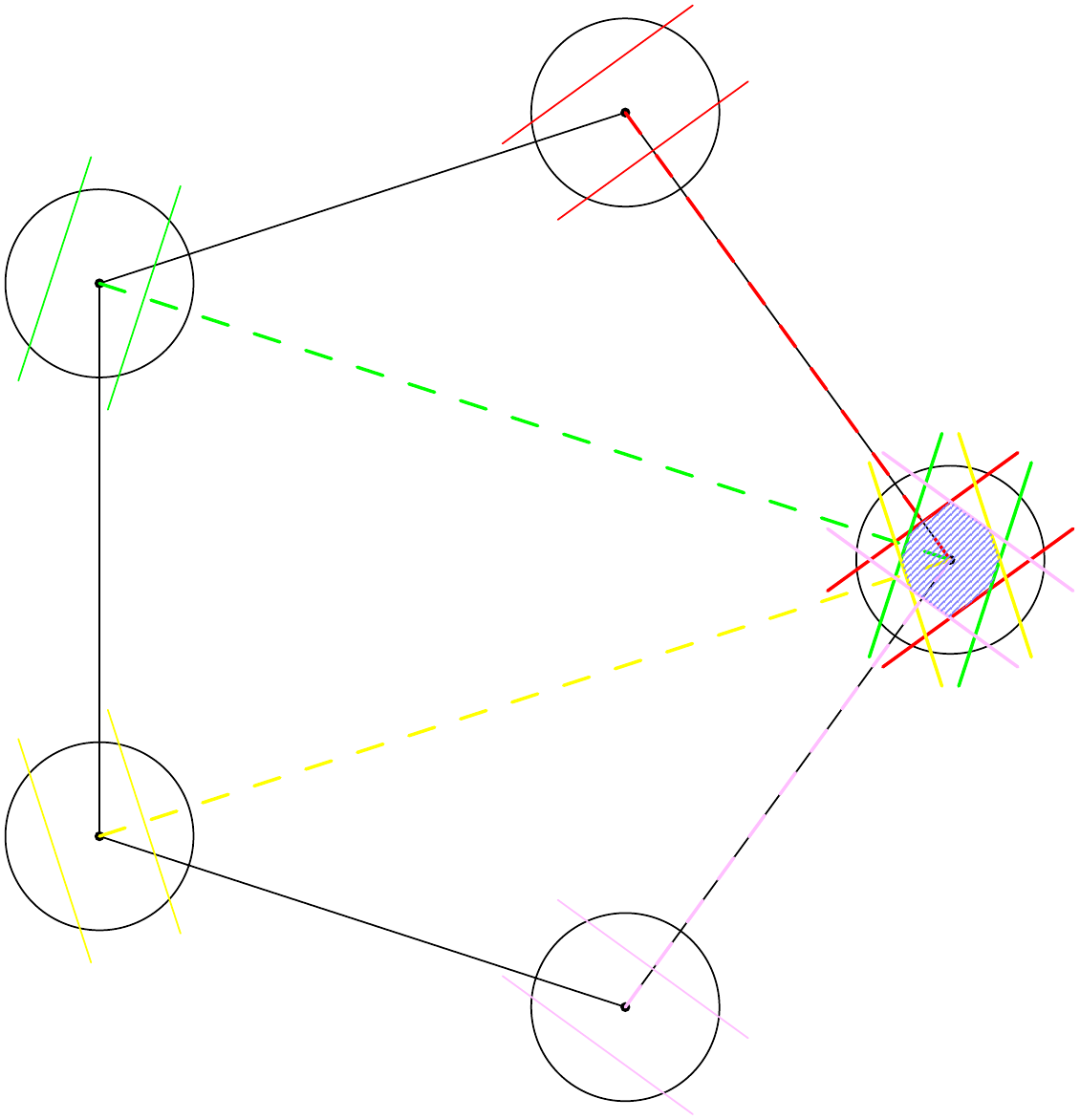}
    \caption{$p$-gon construction: open set avoiding integral distances for $d=2$ and $p=n=5$.}
    \label{fig_pentagon_construction}
  \end{center}
\end{figure}

\highlightt{As a first step, } we improve slightly the construction from Theorem~\ref{construction_f_circ_d_n}.
For $d\ge 2$, we choose an odd prime $p\ge n$ and locate the centers of $n$ open balls of diameter $1-2\varepsilon$, where $\varepsilon$ is suitably chosen, at $n$ consequtive vertices of a regular $p$-gon. For each two balls, we cut off spherical
caps in the directions of the lines through their centers the resulting sets being of width  $\frac{1}{2}-2\varepsilon$. We can assume that $\varepsilon$ approaches 0, as the
circumradius of the $p$-gon increases. For our purpose it suffices to consider a regular $p$-gon $P$ of fixed circumradius $>2$, locate the centers of $n$ open balls at the consequtive vertices of $P$, and cut off spherical caps so that the connected components of the resulting union  are of width 
$\frac{1}{2}$ in the direction of each line through the corresponding vertices, i.e.\ the centers of the $n$ balls.
For future reference we call this construction a \textit{$p$-gon construction}. An example of such a construction for $p=n=5$ in
dimension $d=2$ is depicted in \highlightt{Figure}~\ref{fig_pentagon_construction}. 

\begin{theorem}
    $f_d(n)=n\lambda_d(S_d)$\,\, for all $d\ge 2$ and $n\ge 2$.
\end{theorem}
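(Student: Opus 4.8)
The upper bound $f_d(n)\le n\lambda_d(S_d)$ is already in hand, having been obtained above by combining Lemma~\ref{lemma_upper_bound_general_integral_n_2} with the averaging Lemma~\ref{lemma_averaging}. So the remaining task is to exhibit, for every $\eta>0$, an open set avoiding integral distances with exactly $n$ connected components and volume at least $n\lambda_d(S_d)-\eta$; the plan is to use a \emph{$p$-gon construction} in a limiting regime governed by three coupled parameters. Fix a small $\varepsilon>0$, a large odd prime $p\ge n$, and — to be chosen last — a large integer circumradius $k$. Place $n$ open balls of diameter $1-2\varepsilon$ centred at $n$ consecutive vertices $C_1,\dots,C_n$ of the regular $p$-gon of circumradius $k$, and from each ball cut off, for every one of the other $n-1$ components, the two spherical caps determined by the hyperplanes orthogonal to the corresponding centre line at signed distances $\pm(\tfrac14-\varepsilon)$ from that ball's centre. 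Let $\mathcal{C}_i$ be the resulting truncated ball and $\mathcal{P}=\bigcup_i\mathcal{C}_i$. Each $\mathcal{C}_i$ has diameter $<1$, so contains no pair of points an integral distance apart, and the full balls — hence the $\mathcal{C}_i$ — are pairwise disjoint once $k$ is so large that the smallest centre distance $2k\sin(\pi/p)$ exceeds $1$.

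To verify that $\mathcal{P}$ avoids integral distances I would apply Lemma~\ref{lemma_prestage_width}: it suffices that $\bigl(\dist(\mathcal{C}_i,\mathcal{C}_j),\diam(\mathcal{C}_i\cup\mathcal{C}_j)\bigr)\cap\mathbb{N}=\varnothing$ for every pair $i\ne j$. Writing $\alpha_{ij}=\dist(C_i,C_j)$, the two truncating hyperplanes face each other along the centre line, so $\dist(\mathcal{C}_i,\mathcal{C}_j)=\alpha_{ij}-\tfrac12+2\varepsilon$; and decomposing $a-b$ for $a\in\mathcal{C}_i$, $b\in\mathcal{C}_j$ into its components along and orthogonal to the centre line (the former within $\tfrac12-2\varepsilon$ of $\alpha_{ij}$, the latter of norm $<1-2\varepsilon$) gives $\diam(\mathcal{C}_i\cup\mathcal{C}_j)<\sqrt{\alpha_{ij}^2+\alpha_{ij}(1-4\varepsilon)+1}=\alpha_{ij}+\tfrac12-2\varepsilon+O(1/\alpha_{ij})$. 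Hence $\diam(\mathcal{C}_i\cup\mathcal{C}_j)-\dist(\mathcal{C}_i,\mathcal{C}_j)<1-4\varepsilon+O(1/\alpha_{ij})$, and the open interval above contains no integer as soon as the fractional part of $\dist(\mathcal{C}_i,\mathcal{C}_j)=\alpha_{ij}-\tfrac12+2\varepsilon$ is smaller than $2\varepsilon$ and $\alpha_{ij}$ is large. The distinct centre distances are the reals $2k\sin(m\pi/p)$, $m=1,\dots,n-1$; by Lemma~\ref{lemma_diagonals_p_gon} the numbers $2\sin(m\pi/p)$ are irrational and linearly independent over $\mathbb{Q}$, so by the denseness half of Weyl's theorem (Kronecker's theorem) there are arbitrarily large integers $k$ for which all the fractional parts $\{2k\sin(m\pi/p)-\tfrac12+2\varepsilon\}$ lie in $[0,2\varepsilon)$. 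Taking such a $k$ — also large enough that the finitely many $O(1/\alpha_{ij})$ terms are dominated — makes $\mathcal{P}$ free of integral distances.

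It remains to bound the volume from below. The $n$ chosen vertices lie on an arc of angular width $(n-1)\cdot 2\pi/p$, so all $n-1$ cap directions used for a fixed $\mathcal{C}_i$ lie within a cone of half-angle $O(n/p)$ about the common tangent direction $\tau$. Consequently $\mathcal{C}_i$ lies between the ball of diameter $1-2\varepsilon$ truncated in the single direction $\tau$ to width $\tfrac12-2\varepsilon$, which contains $\mathcal{C}_i$, and the same ball truncated in direction $\tau$ to width $\tfrac12-2\varepsilon-O(n/p)$, which is contained in $\mathcal{C}_i$ since $|\langle x-C_i,u\rangle-\langle x-C_i,\tau\rangle|<\tfrac12\cdot O(n/p)$ for every cap direction $u$; both bounding bodies have volume tending to $\lambda_d(S_d)$ as $\varepsilon\to0$ and $p\to\infty$. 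Letting $\varepsilon\to0$, $p\to\infty$ through primes, and $k\to\infty$ in concert — say $\varepsilon_\nu=1/\nu$, $p_\nu$ the least prime $\ge\max(n,\nu)$, and $k_\nu\ge\nu$ supplied by Weyl's theorem — produces configurations with $\lambda_d(\mathcal{P})\to n\lambda_d(S_d)$, so $f_d(n)\ge n\lambda_d(S_d)$; together with the upper bound this gives $f_d(n)=n\lambda_d(S_d)$ for all $d\ge2$ and $n\ge2$.

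The main obstacle, I expect, is the joint handling of the three limits rather than any single estimate: Kronecker's theorem forces $k$ to be large, the volume bound forces $p$ to be large (so that the $n-1$ cap directions are nearly parallel and each multiply-truncated ball still has volume close to $\lambda_d(S_d)$), and $\varepsilon$ must be driven to $0$ throughout, so one has to make the errors $O(1/\alpha_{ij})$ in the distance estimate and $O(n/p)$ in the volume estimate simultaneously negligible along a single admissible sequence. One should also note that the Kronecker step requires $1,2\sin(\pi/p),\dots,2\sin((n-1)\pi/p)$ to be $\mathbb{Q}$-linearly independent; this follows from the same Mann-theorem argument that underlies Lemma~\ref{lemma_diagonals_p_gon} once the constant $1$ is adjoined to the relation.
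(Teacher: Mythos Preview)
Your proposal is correct and follows essentially the same route as the paper: the upper bound via Lemma~\ref{lemma_upper_bound_general_integral_n_2} plus averaging, and the lower bound via the $p$-gon construction with balls truncated to width $\approx\tfrac12$ along the centre lines, invoking Lemma~\ref{lemma_diagonals_p_gon} and Weyl/Kronecker to choose $k$, and sending $p\to\infty$ so that the cap directions become nearly parallel and each component recovers volume close to $\lambda_d(S_d)$. The only cosmetic difference is that the paper passes to an explicit subset $\mathcal{P}'\subset\mathcal{P}$ consisting of $n$ congruent slices $S_{d,\varepsilon}$ whose cutting hyperplanes are all orthogonal to the $x$-axis (and then bounds $\dist(a,b)$ directly for $a,b\in\mathcal{P}'$), whereas you keep the multiply-truncated balls $\mathcal{C}_i$, verify the integral-distance avoidance via Lemma~\ref{lemma_prestage_width}, and sandwich their volumes between two singly-truncated slices.
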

\begin{proof}
  \highlighttt{
  It follows from Lemmas~\ref{lemma_upper_bound_general_integral_n_2} and 5 that 
  $f_d(n)\le n\lambda_d(S_d)$. By 
  Lemma~\ref{lemma_two_component_construction} we can assume that $n\ge 3$. For arbitrary $\varepsilon$ we denote by $S_{d,\varepsilon}$
  a $d$-dimensional spherical symmetrical slice with diameter $1-2\varepsilon$ and minimum width $\frac{1}{2}-2\varepsilon$.
  As $\varepsilon$ approaches $0$, the volume of $S_{d,\varepsilon}$ tends to $\lambda_d(S_d)$. Below we provide a construction
  of an open $n$-component point set $\mathcal{P}'$ avoiding integral distances each of whose connected components contains a congruent
  copy of $S_{d,\varepsilon}$.}
  
  \highlighttt{Consider a regular $p$-gon $P$ with circumradius $k$, the parameters $p$ and $k$ are to be specified. We enumerate clockwise the
  vertices of $P$ from $1$ to $p$ and assume w.l.o.g.\ that the line through the vertices $1$ and $2$ is the $x$-axis. At each vertex $1\le i\le n\le p$
  we place the center of an open $d$-dimensional ball of diameter $1-\varepsilon$.
  For each pair of the $n$ balls we cut off spherical caps in the direction of the lines through their centers resulting
  in a set of width $\frac{1}{2}-\varepsilon$. We denote the union of the resulting $n$ open sets by $\mathcal{P}$.}
  
  \highlighttt{Consider further all  $2\cdot {n \choose 2}$ cutting hyperplanes that cut off the spherical caps from the initial open balls. As
  the number $p$ of vertices of the $p$-gon $P$ increases, with $n$ fixed, all those hyperplanes tend to be orthogonal to the $x$-axis. Now choose
  a prime $p$ large enough so that each connected component of $\mathcal{P}$ contains a $d$-dimensional spherical symmetrical 
  slice with diameter $1-2\varepsilon$ and minimal width $\frac{1}{2}-2\varepsilon$ whose cutting hyperplanes are orthogonal to the $x$-axis.
  By $\mathcal{P}'$ we denote the subset of $\mathcal{P}$ which is the union of those $S_{d,\varepsilon}$'s.}
  
  \highlighttt{There exists a number $k_1$ such that for $k\ge k_1$ each line hits at most two connected components of $\mathcal{P}'$. Since the diameter
  of each of its connected components is at most $1-2\varepsilon$, the pairwise distances between the points within the same component are non-integral.
  Let $a$ and $b$ be two points in different connected components. By the construction
  the distance between the corresponding centers is given by $2k\cdot\sin\!\left(\frac{j\pi}{p}\right)$ for a suitable integer $j$. Thus
  $$
    \dist(a,b)\ge 2k\cdot\sin\!\left(\frac{j\pi}{p}\right)-\frac{1}{2}+\varepsilon.
  $$}
  
  \highlighttt{There exists a number $k_2$ such that for $k\ge k_2$, we have
  $$
    \dist(a,b)\le 2k\cdot\sin\!\left(\frac{j\pi}{p}\right)+\frac{1}{2}-\varepsilon,
  $$
  since all the lines joining the centers of the connected components of $\mathcal{P}'$ tend to be parallel to the $x$-axis, as $k$ increases. (cf. the proof
  of Lemma~\ref{lemma_two_component_construction}.)}
  
  \highlighttt{Thus, provided that for $k\ge\max\{k_1,k_2\}$, the system of inequalities 
  $$
    \left[2k\cdot\sin\!\left(\frac{j\pi}{p}\right)-\frac{1}{2}+\varepsilon\right]\le 2\varepsilon
  $$
  has a solution, the distance $\dist(a,b)$ can not be integral, so $\mathcal{P}'$ does not contain a pair of points an integral distance apart.
  By Lemma~\ref{lemma_diagonals_p_gon} and the Weyl theorem the above system indeed admits a solution for all $k$}. This completes the proof.
\end{proof}

\section{Conclusion}
\label{sec_conclusion}
\noindent
Problems related to point sets with pairwise rational or integral distances were one of Erd\H{o}s' favorite subjects in
combinatorial geometry. In the present paper we study \highlighttt{a counterpart to this type of problems} by asking for the largest open
$d$-dimensional set $\mathcal{P}$ of points without a pair of points an integral distance apart, i.e.\ that with the largest
possible volume $f_d(n)$, where $n$ stands for the number of connected components of $\mathcal{P}$. As a relaxation we
have also considered $d$-dimensional open point sets with $n$ connected components of  diameter at most $1$ each whose intersection with every line has a total length of at most \highlighttt{$1$}. The corresponding \highlighttt{maximum}
volume has been denoted by $l_d(n)$. While the assumption on the diameters of the connected components seems to be a bit
technical, geometrical objects \highlighttt{with specified} intersections with lines or higher-dimensional subspaces are interesting in their own right.
In this context we just mention the famous Kakeya problem \highlighttt{of whether a Kakeya set in $\mathbb{R}^d$, i.e.\ a compact set
containing a unit line segment in every direction, has Hausdorff dimension $d$ , see e.g.\ the review \cite{wolff1999recent} or \cite[Problem G6]{UPIG}.}

By restricting the shapes of the connected components to $d$-dimensional open  balls, we were able to determine the exact
values of the corresponding \highlighttt{maximum} volumes $f^\circ_d(n)$ and $l^\circ_d(n)$ respectively. \highlightt{Also the values of
$f_d(n)$ have been determined exactly, while for $l_d(n)$ we only have the lower bound $l_d(n)\ge f_d(n)$, which we conjecture
to be tight.} 

\section*{Acknowledgements}
The authors thank  Juris Steprans, Peter Biryukov, Robert Connelly, and Andrey Verevkin for valuable comments and discussions, Tobias Kreisel
for producing several graphics, Thomas Kriecherbauer and Benoit Cloitre for some remarks on integrals and integer sequences. We also thank \highlight{the anonymous referees for their extensive comments, which helped very much to improve the presentation
of this paper.}

\bibliography{anti_integral}
\bibdata{anti_integral}
\bibliographystyle{amsplain}  

\appendix
\section{Appendix}

\highlight{
In order to keep the main part of the paper more accessible we have moved some side remarks and necessary technical computations to
this Appendix.} 

\subsection{\highlight{Details of the annuli construction}}
\label{subsec_annuli}
\noindent
\highlight{
We shall show that Example~\ref{annuli_construction} satisfies the properties as stated.
First note that both $\mathcal{A}_n^d$ and $\mathcal{A}_{n+1}^d$ meet $\mathcal{B}_n^d$ for $n\ge 1$. Thus $\mathcal{P}$
is a connected open set in $\mathbb{R}^d$. The volume $\lambda_d\left(\mathcal{A}_n^d\right)$ is given by
$$
  \lambda_d(B_d)\cdot\left(\left(2n+\frac{2}{dn^d}\right)^d-(2n)^d\right)=\lambda_d(B_d)\cdot2^d\cdot\left(\left(n+\frac{1}
  {dn^d}\right)^d-n^d\right)\ge\lambda_d(B_d)\cdot2^d\cdot\frac{1}{n}.
$$
Since the harmonic series diverges to infinity, the $d$-dimensional volume of $\mathcal{P}$ is unbounded.
}

\highlight{
Now we consider the intersection of a line $\mathcal{L}$ with a $d$-dimensional annulus $\mathcal{C}_d(r_1,r_2)$ of inner
radius $r_1$ and of outer radius $r_2$ centered at the origin. By symmetry we can assume that $\mathcal{L}$ is parallel to
the $x$-axis, i.e.\ $\mathcal{L}=\left\{\begin{pmatrix}1&0&\dots&0\end{pmatrix}^T\cdot \lambda+ \begin{pmatrix}0&a_2&\dots&
a_d\end{pmatrix}^T\mid\lambda\in\mathbb{R}\right\}$. Furthermore, we can also assume by symmetry that $a_i\ge 0$ for all $2\le i\le d$.
\highlight{To simplify notation we set} $l:=\sqrt{\sum_{i=2}^d a_i^2}$. Note that $\mathcal{C}_d(r_1,r_2)\cap\mathcal{L}=\emptyset$ for $l^2>r_2^2$. The $x$-coordinates of the intersections of $\mathcal{L}$ with the $d$-dimensional sphere of radius $r_1$ are given by
$\pm \sqrt{r_1^2-l^2}$, as long as $l^2\le r_1^2$. Similarly the $x$-coordinates of the intersections of $\mathcal{L}$ and the $d$-dimensional sphere
of radius $r_2$ are given by $\pm \sqrt{r_2^2-l^2}$, as long as $l^2\le r_2^2$. For $l^2\le r_1^2$, we have
$$
  \lambda_1\left(\mathcal{C}_d(r_1,r_2)\cap\mathcal{L}\right)=2\cdot\underset{=:h_1(a_2,\dots,a_d)}{
  \underbrace{\left(\sqrt{r_2^2-\sum\limits_{i=2}^d a_i^2}-\sqrt{r_1^2-\sum\limits_{i=2}^d a_i^2}\right)}}.
$$
Since $$\frac{\partial h_1}{\partial a_i}(a_2,\dots,a_d)=a_i\cdot\left(\frac{1}{\sqrt{r_1^2-\sum\limits_{i=2}^d a_i^2}}-
\frac{1}{\sqrt{r_2^2-\sum\limits_{i=2}^d a_i^2}}\right)\ge0$$, we can assume $l^2\ge r_1^2$ for the
maximum length of the line intersection. If the $a_i$ are restricted by an inequality $l^2\le k^2\le r_1^2$, the maximum length of the intersection
is bounded above by $2\sqrt{r_2^2-k^2}-2\sqrt{r_1^2-k^2}$.
}

\highlight{
For $r_1^2\le\sum_{i=2}^d a_i^2\le r_2^2$, we have
$$
  \lambda_1\left(\mathcal{C}_d(r_1,r_2)\cap\mathcal{L}\right)=2\cdot\underset{=:h_2(a_2,\dots,a_d)}{
  \underbrace{\sqrt{r_2^2-\sum\limits_{i=2}^d a_i^2}}}
$$
and
$$
  \frac{\partial h_1}{\partial a_2}(a_2,\dots,a_d)=-a_i\cdot \frac{1}{\sqrt{r_2^2-\sum\limits_{i=2}^d a_i^2}}\le 0,
$$
so the extreme values \highlight{are} attained at $\sum_{i=2}^d a_i^2=r_1^2$ where we have 
$\lambda_1\left(\mathcal{C}_d(r_1,r_2)\cap\mathcal{L}\right)\le 2\sqrt{r_2^2-r_1^2}$.
}

\highlight{
Thus for an arbitrary line $\mathcal{L}$, we have
$$
  \lambda_1\left(\cup_{n\ge30}\mathcal{B}_n^d\cap\mathcal{L}\right)\le \sum_{n=30}^\infty 
  2\sqrt{\left(1+\frac{1}{n^4}\right)^2-1^2}\le\sum_{n=30}^\infty\frac{2\sqrt{3}}{n^2}<0.12.
$$
For the remaining part we restrict ourselves with lines parallel to the $x$-axis. If $l<30$, then
\begin{eqnarray*}
  \lambda_1\left(\cup_{n\ge30}\mathcal{A}_n^d\cap\mathcal{L}\right)&\le& 2\sqrt{\left(30+\frac{1}{d\cdot 30^d}\right)^2-30^2}+
  \sum_{n=31}^\infty
  2\sqrt{\left(n+\frac{1}{dn^d}\right)^2-l^2}-2\sqrt{n^2-l^2}\\
  &\le& 0.366+2\sum\limits_{n=31}^\infty \frac{\frac{2}{n}}{2\sqrt{n^2-30^2}}<0.47.
\end{eqnarray*}
For $l\ge 30$, we have
\begin{eqnarray*}
  \lambda_1\left(\cup_{n\ge30}\mathcal{A}_n^d\cap\mathcal{L}\right)&\le&
  4\sqrt{\left(\lfloor l\rfloor +\frac{1}{d\cdot \lfloor l\rfloor^d}\right)^2-\lfloor l\rfloor^2}+
  \sum\limits_{n=\lfloor l+2\rfloor}^\infty 2\sqrt{\left(n+\frac{1}{dn^d}\right)^2-l^2}-2\sqrt{n^2-l^2}\\
  &\le& 0.732++2\int_{\lfloor l+1\rfloor}^\infty \frac{1}{x\sqrt{x^2-l^2}}\operatorname{d}x
  = 0.732+\frac{2}{l}\cdot\arcsin\left(\frac{l}{\lfloor l+1\rfloor}\right)\\
  &\le& 0.732 +\frac{2}{l} \cdot\frac{\pi}{2}<0.84.
\end{eqnarray*}  
Since $0.12+\max\{0.47,0.84\}<1$, we have $\lambda_1(\mathcal{P}\cap\mathcal{L})<1$ for all lines $\mathcal{L}$.
}

\subsection{\highlight{Volumes of truncated balls and caps}}
\label{subsec_volume_formulas}

\highlight{In Table~\ref{table_values_slices_and_caps} we presented the volumes of truncated $d$-dimensional open balls of unit
 diameter $S_d$ and the cut-off bodies, i.e.\ caps $C_d$, in small dimensions $d$.  Equations (\ref{eq_S_d}) and 
(\ref{eq_C_d}) enable us to compute the values $v(d):=\int\limits_{0}^{\frac{\pi}{6}}\cos^d(x)\,\operatorname{d}x$. First
few values are given by $v(1)  = \frac{1}{2}$, $v(2)  = \frac{1}{8}\cdot\sqrt{3}+\frac{1}{12}\cdot \pi$, $v(3)  = \frac{11}{24}$, and
$v(4)  = \frac{9}{64}\cdot\sqrt{3}+\frac{1}{16}\cdot\pi$. Integrating by parts we find 
$$
  v(d)=\left\{\begin{array}{rl}
   \frac{(2m-1)!!}{(2m)!!}\cdot\left(\frac{1}{2}\cdot\sum\limits_{k=0}^{m-1}\frac{(2k)!!}{(2k+1)!!}\cdot \frac{\sqrt{3}}{2}\cdot\left(\frac{3}{4}\right)^{k}
   +\frac{\pi}{6}\right)&\text{for } d=2m,\\
   \frac{(2m)!!}{(2m+1)!!}\cdot\frac{1}{2}\cdot\sum\limits_{k=0}^m \frac{(2k-1)!!}{(2k)!!}\cdot \left(\frac{3}{4}\right)^{k}
   &\text{for } d=2m+1.
  \end{array}\right.
$$
Given the integer sequence A091814 from the ``On-line encyclopedia of integer sequences'', $v(d)$ can be written as $\frac{A091814(d))\cdot\left(\frac{d-1}{2}\right)!}{d!\cdot 2^{\frac{d+1}{2}}}$ for all odd $d$. Benoit Cloitre contributed the following second order recursion formula in this case: $v(1)=\frac{1}{2}$, $v(3)=\frac{11}{24}$, and $$v(2n-1)=\frac{1}{8n-4}\cdot\Bigl((14n-17)\cdot v(2n-3)-6(n-2)\cdot v(2n-5)\Bigr)$$ for $n\ge 3$. A similar recursion formula can be obtained for all even $d$, where
$v(d)$ can be written in the form  $q(d)\cdot\sqrt{3}+\frac{{{d-1}\choose {\frac{d}{2}}}}{2^d\cdot 3}\cdot\pi$ for some rational number $q(d)$.}

\highlight{
To determine the asymptotic behavior of $v(d)$ as $n\to\infty$ one can compute the corresponding ordinary generating function:
$$
  F(z):=\sum\limits_{k=0}^\infty v(k)z^k=\sum\limits_{k=0}^\infty\int\limits_0^{\frac{\pi}{6}} (z\cos t)^k\,\operatorname{d}t=
  \int\limits_0^{\frac{\pi}{6}}\frac{\operatorname{d}t}{1-z\cos t}=\frac{2}{\sqrt{1-z^2}}\arctan\!\left(\sqrt{\frac{1+z}{1-z}}\cdot\tan \frac{\pi}{12}\right).
$$
We apply the singularity analysis to determine the asymptotic behavior of $a_n:=F_\alpha(z)[z^n]$, where slightly more generally, $F_\alpha(z):= \frac{2}{\sqrt{1-z^2}}\arctan\!\left(\sqrt{\frac{1+z}{1-z}}\cdot\alpha\right)$, see e.g.\ \cite[chapter VI]{MR2483235}. The main singularity is at $z=1$, since there is a compensation for $z=-1$.
It follows from
\begin{eqnarray*}
  \arctan\!\left(\sqrt{\frac{1+z}{1-z}}\cdot\alpha\right) &=& \frac{\pi}{2}+O\bigl((1-z)^\frac{1}{2}\bigr),\\
  \frac{2}{\sqrt{1+z}} &=& \sqrt{2} +O(1-z),\text{ and}\\
  \bigl[z^n\bigr]\frac{1}{\sqrt{1-z}} &=& \frac{1}{\sqrt{\pi n}}+O\!\left(\frac{1}{n^{\frac{3}{2}}}\right) 
\end{eqnarray*}
that
$$
  a_n=\sqrt{\frac{\pi}{2n}}+O\!\left(\frac{1}{n^{\frac{3}{2}}}\right).
$$
Thus $v(d)\sim \sqrt{\frac{\pi}{2d}}$.}

\end{document}